\DeclareMathOperator*{\supp}{supp}
\newcommand*\diff{\mathop{}\!\mathrm{d}}
\renewcommand{\l}{\left}
\renewcommand{\r}{\right}
\newcommand{\norm}[1]{|\!|#1|\!|}
\newcommand{\round}[1]{\left(#1\right)}
\newcommand{\scal}[1]{\left\langle#1\right\rangle}
\newcommand{\ol}{\overline}
\newcommand{\R}{{\mathbb R}}
\newcommand{\N}{{\mathbb N}}
\newcommand{\Lp}[1]{L^{#1}(\Omega)}
\newcommand{\Wp}[1]{W^{1,#1}(\Omega)}
\newcommand{\Wpzero}[1]{W^{1,#1}_0(\Omega)}
\newcommand{\close}{\overline{\Omega}}
\newcommand{\eps}{\varepsilon}
\newcommand{\into}{\int_{\Omega}}
\newcommand{\Assg}[1]{\textup{(g)}}
\begin{document}

	\title[Multiplicity results for critical double phase problems]
	{Multiplicity results for double phase problems involving a new type of critical growth}

	\author[H.H.\ Ha and K.\ Ho]{Hoang Hai Ha and Ky Ho}
	
	\address{Hoang Hai Ha \newline
		Faculty of Applied Science, Ho Chi Minh City University of Technology,
		Vietnam National University Ho Chi Minh City, 268 Ly Thuong Kiet St., Dist. 10, Ho Chi Minh City, Vietnam}
	\email{hoanghaiha1910@gmail.com }
	
	\address{Ky Ho \newline
		Department of Mathematical Sciences, Ulsan National Institute of Science and Technology, 50 UNIST-gil, Eonyang-eup, Ulju-gun, Ulsan 44919, Republic of Korea}
	\email{kyho@unist.ac.kr, hnky81@gmail.com}

	\subjclass[2020]{35B33, 35D30, 35J20, 49J35, 35J60, 46E35}  
	\keywords{variable exponent spaces; critical embeddings; concentration-compactness principle;  double phase operators; variational methods}
	%------------------------------------------------------------------------------------------------------------------------------------
	\begin{abstract}
		
		Using variational methods, we obtain several multiplicity results for double phase problems that involve variable exponents and a new type of critical growth. This new critical growth is better suited for double phase problems when compared to previous works on the subject. In order to overcome the lack of compactness caused by the critical exponents, we establish a concentration-compactness principle of Lions type for spaces associated with double phase operators, which is of independent interest to us. Our results are new, even in the case of constant exponents.

	\end{abstract}
	\maketitle \numberwithin{equation}{section}
	\newtheorem{theorem}{Theorem}[section]
	\newtheorem{lemma}[theorem]{Lemma}
	\newtheorem{definition}[theorem]{Definition}
	\newtheorem{claim}[theorem]{Claim}
	\newtheorem{proposition}[theorem]{Proposition}
	\newtheorem{remark}[theorem]{Remark}
	\newtheorem{corollary}[theorem]{Corollary}
	\newtheorem{example}[theorem]{Example}
	\newtheorem{assumption}[theorem]{Assumption}
	\allowdisplaybreaks

	%=========================INTRODUCTION============================== %
	\section{Introduction}\label{Intro}
	
	In this paper, we investigate the multiplicity of solutions to the following double phase problem involving critical growth
\begin{equation}\label{e1.1}
	\begin{cases}
		- M\left(\int_\Omega \mathcal{A}(x,\nabla u)\diff x\right)\operatorname{div}A(x,\nabla u) = \lambda f(x,u) +\theta B(x,u)& ~\text{in}~ \Omega ,\\
		u=0& ~\text{on} ~ \partial \Omega ,
	\end{cases}
\end{equation}
where $\Omega $ is a bounded domain of $\mathbb{R}^{N}$ with Lipschitz boundary $\partial \Omega$; $\mathcal{A}: \overline{\Omega }\times\R^N\to \R$ and $A: \overline{\Omega }\times\R^N\to \R^N$ are given by
\begin{equation*}\label{A1}
	\mathcal{A}(x,\xi):=\frac{1}{p(x)}|\xi|^{p(x)}+\frac{a(x)}{q(x)}|\xi|^{q(x)}
\end{equation*} 
and 
$$A(x,\xi):=\nabla_\xi \mathcal{A}(x,\xi)=|\xi|^{p(x)-2}\xi+a(x)|\xi|^{q(x)-2}\xi$$ 
for all $(x,\xi)\in \overline{\Omega}\times\R^N$ with $a,p,q\in C^{0,1}(\overline{\Omega })$ such that $a(\cdot) \geq 0,\, 1<p(\cdot)<q(\cdot)<N$; $M: \, [0, \infty) \to \R$ is a real function; $f,B:\Omega\times \mathbb{R} \to \mathbb{R}$ are  Carath\'eodory functions in which $f(x,u)$ has a subcritical growth while $B(x,u)$ generalizes the critical term $c_1|u|^{p^*(x)-2}u+c_2a(x)^{\frac{q^*(x)}{q(x)}}|u|^{q^*(x)-2}u$ with  $c_1>0$, $c_2\geq 0$, and 
$h^*(\cdot):=\frac{Nh(\cdot)}{N-h(\cdot)}$ for $h(\cdot)<N$; and $\lambda,\theta$ are positive real parameters.

	\vskip5pt
In the past two decades, there has been significant focus on addressing problems associated with the double phase operator $\operatorname{div}\left(|\nabla u|^{p-2}\nabla u+a(x)|\nabla u|^{q-2}\nabla u\right)$. The research idea originated with Zhikov \cite{Z86,Z11} when he explored Lavrentiev's phenomena and the elastic theory. The problems relevant to the double phase operator  have led to the study of the energy  functional given by
\begin{equation}\label{Func1}
	I(u):=\int_\Omega \left( \frac{1}{p}|\nabla u|^{p}+\frac{a(x)}{q}|\nabla u|^{q}\right) \diff x.
\end{equation}
A natural extension of \eqref{Func1} is the functional 
\begin{equation}\label{Func2}
	J(u):= \int_\Omega \mathcal{A}(x,\nabla u)\diff x=\int_\Omega \left(\frac{1}{p(x)}|\nabla u|^{p(x)}+\frac{a(x)}{q(x)}|\nabla u|^{q(x)} \right)\diff x.
\end{equation}
The  integrand $\mathcal{A}(x,\xi)$ of this functional has the unbalance growths when $0 \leq a(\cdot) \in L^\infty(\Omega)$, that is
\begin{align*}
	c_1|\xi|^{p(x)} \leq \mathcal{A}(x,\xi) \leq c_2 \l(1+|\xi|^{q(x)}\r)
\end{align*}
for a.\,a.\,$x\in\Omega$ and for all $\xi\in\R^N$ with $c_1,c_2>0$.

The term ``double phase" comes from the continuous switching between two phases of elliptic behavior within the integrand $\mathcal{A}$, with this process being controlled by the weighted function $a(\cdot)$. That means the energy density of $J$ demonstrates ellipticity in the gradient of order $q(\cdot)$ in the set $\{x\in\Omega\,:\,a(x)>\eps\}$ for any fixed $\eps>0$, and the  order $p(\cdot)$ on the set $\{x\in\Omega\,:\,a(x)=0\}$.

Applications of the double phase differential operators $\operatorname{div}\left(|\nabla u|^{p(x)-2}\nabla u+a(x)|\nabla u|^{q(x)-2}\nabla u\right)$ ($=\operatorname{div}A(x,\nabla u)$) and energy functional in \eqref{Func2} can be found in physics and engineering. For instance, Bahrouni-R\u{a}dulescu-Repov\v{s} \cite{Bah-Rad-Rep.19} studied transonic flows, Benci-D'Avenia-Fortunato-Pisani \cite{BDFP00} explored quantum physics, and Cherfils-Il’yasov \cite{CI05} investigated reaction-diffusion systems. Notably, in elasticity theory, the modulating coefficient $a(\cdot)$ governs the geometry of composites made of two materials with distinct power hardening exponents $q(\cdot)$ and $p(\cdot)$, as described by Zhikov \cite{Z11}. As a result, there have been a number of works on double phase problems from various perspectives. 

The regularity of local minimizers for double phase functionals have been a center of attention since the seminal work by Marcellini \cite{Mar89b}. On this topic, we can refer the reader to the works \cite{BCM15,BCM16,BCM18,BKM15,BO20,CM15a,CM15b,DeF18,HHT17,Mar89b,Mar91,Ok18,Ok20,RT16,RT20} and others, as well as the references therein. Existence results for double phase problems have also intensively been studied in recent years. Among others, we refer to works \cite{CS16,FW21,GP19,GW20a,GW20b,GW21,LD18,LD20,PS18} for subcritical problems and \cite{AFMW22,FFW22,MW19} for problems involving the critical growth $p^\ast$. 

\vskip5pt
 In order to study problems driven by $\operatorname{div}A(x,\nabla u)$, Crespo-Blanco-Gasi\'{n}ski-Harjulehto-Winkert presented the fundamental properties of Musielak-Orlicz spaces $L^{\mathcal{H}}(\Omega)$ and Musielak-Orlicz Sobolev spaces $W^{1,\mathcal{H}}(\Omega)$ in \cite{CGHW}, where
\begin{equation*}
	\mathcal{H}(x,t):=t^{p(x)}+a(x)t^{q(x)} \text{ for } (x,t) \in \overline{\Omega} \times [0,\infty)
\end{equation*}
(see Section~\ref{Pre} for the definitions of the Musielak-Orlicz space $\Lp{\mathcal{H}}$ and the Musielak-Orlicz-Sobolev space $\Wp{\mathcal{H}}$). Furthermore, \cite{CGHW} presents the existence and uniqueness of solutions for a class of problems involving $\operatorname{div}A(x,\nabla u)$. Subsequently, several studies investigating $\operatorname{div}A(x,\nabla u)$ have been conducted, and interested readers can find references to some of these studies in \cite{CGHW,KKOZ22,VW22}. It is worth pointing out that in both constant and variable exponent cases, while existing works treat double phase problems in terms of two exponents $p(\cdot)$ and $q(\cdot)$ with $p(\cdot)<q(\cdot)$, the reaction terms in those works have a growth that does not exceed $p^*(\cdot)$. It is clear that such a growth does not capture the behavior for the critical growth of Sobolev-type embedding on $\Wp{\mathcal{H}}$, which continuously switches between $p^*(\cdot)$ and $q^*(\cdot)$. %This is because the implicit definition of $\mathcal{H}_*$ has prevented a condition of the nonlinear term from being compatible with the embedding $\Wp{\mathcal{H}}\hookrightarrow \Lp{\mathcal{H}_*}$. 
To fill this gap, Ho and Winkert \cite{HW22} recently proposed the following critical embedding on $\Wp{\mathcal{H}}$:
\begin{equation}\label{I.CE}
\Wp{\mathcal{H}}\hookrightarrow \Lp{\mathcal{G}^\ast},
\end{equation}
where
$$\mathcal{G}^\ast(x,t):=t^{p^\ast(x)}+a(x)^{\frac{q^\ast(x)}{q(x)}}t^{q^\ast(x)} \text{ for } (x,t) \in \overline{\Omega} \times [0,\infty).$$ 
They also showed the optimality of $\mathcal{G}^\ast$ among those  $\Psi$ of the form $\Psi(x,t)=|t|^{r(x)}+a(x)^{\frac{s(x)}{q(x)}}|t|^{s(x)}$ such that $
W^{1,\mathcal{H}}(\Omega)
\hookrightarrow  L^{\Psi}(\Omega)$. As an application of the critical embedding \eqref{I.CE}, in \cite{HW22} the authors obtained the boundedness of solutions to generalized double phase problems. To the best of our knowledge, however, no existence results have been established for double phase problems with growth exceeding $p^*(\cdot)$.  Motivated by this, we investigate the multiplicity results for problem \eqref{e1.1}, and this is the first attempt to handle double phase problems with growth surpassing the threshold of $p^*(\cdot)$. It is noteworthy that problem~\eqref{e1.1} contains not only critical exponents but also a nonlocal Kirchhoff term. The Kirchhoff problem, initially introduced by Kirchhoff \cite{Kir1876}, originated from the study of an extension of the classical D'Alembert's wave equation, specifically concerning the analysis of free vibrations in elastic strings. Elliptic problems of Kirchhoff type have a rich background in several physics applications and have gained significant attention from numerous researchers in recent years, e.g., \cite{AFMW22, APS10,DS92,FMPV23,FP20,HMR17} and the references therein.

Critical problems have their origins in geometry and physics, and the most notorious example is Yamabe's problem \cite{Aub76}. After the seminal work by Brezis-Nirenberg \cite{BN83} on the Laplace equation, numerous extensions and generalizations have been made in various directions. The lack of compactness that arises in connection with the variational approach makes the critical problems delicate and interesting. For such problems, the concentration-compactness principle (abbreviated as CCP) introduced by P.L. Lions \cite{Lions85} has played a crucial role in demonstrating the precompactness of Palais-Smale sequences. This CCP has been extended to Sobolev spaces with variable exponents, allowing for the exploration of critical problems involving variable exponents, see e.g.,  \cite{BS10,CH21,Fu09,FZ10,HK21, HKL22} and references therein. 
 In order to address our critical problem, we establish a novel concentration-compactness principle for the Musielak-Orlicz-Sobolev space $W_0^{1,\mathcal{H}}(\Omega)$ (the completion of $C^\infty_c(\Omega)$ in $\Wp{\mathcal{H}}$), which stands as an independent contribution in our research. It is noteworthy that our CCP is a direct extension of the one obtained in \cite{BS10} when considering the specific case of $a(\cdot)\equiv 0$. 

Upon acquiring the CCP, we employ it to establish several multiplicity results for problem \eqref{e1.1}. One can observe that \eqref{e1.1} involves the presence of a Kirchhoff term, which needs to be treated more carefully. To address this, we introduce a modified problem by truncating the Kirchhoff term, and subsequently showed that the solutions of the modified problem are also solutions for \eqref{e1.1}. Another highlight of our work is that we impose local conditions on the reaction term. Specifically, the right-hand side of \eqref{e1.1} exhibits concave-convex/ superlinear behavior only within a subset of $\Omega$. By employing techniques from calculus and variations, such as the genus theory and deformation lemma, we are able to derive multiplicity results for the generalized concave-convex/ superlinear problems. Remarkably, our result is the first attempt for double problems with growth that surpasses the threshold of $p^*(\cdot)$ to the authors’ best knowledge.

We organize the paper as follows. In Section~\ref{MainResults}, we present our main results: the CCP for $W_0^{1,\mathcal{H}}(\Omega)$ (Theorem~\ref{Theo.ccp}), the existence of infinitely many solutions for the generalized concave-convex problem (Theorem~\ref{Theo.cc}), and the existence of many solutions for the generalized superlinear problem (Theorems~\ref{Theo.sl.M=1} and \ref{Theo2.sl.M=1}). The theoretical framework for our analysis is introduced in Section~\ref{Pre}. The proof of the CCP is provided in Section~\ref{CCP}, while the multiplicity results are addressed in Section~\ref{Existence}.

  \section{Main Results}\label{MainResults}
  
 Throughout the paper we denote 
 $$C^{0,1}(\close):=\{h:\overline{\Omega} \to \mathbb{R},\ h\ \text{ is Lipschitz continuous}\},$$
  $$C_+(\overline{\Omega}):=\{h\in C(\overline{\Omega}): \ \min_{x\in \overline{\Omega}}h(x)>1\},$$
  and for $h\in C(\overline{\Omega})$,
  $$h^+:=\max_{x\in \overline{\Omega}}h(x),\ \ h^-:=\min_{x\in \overline{\Omega}}h(x).$$
The notations $u_n\to u$ (resp. $u_n \rightharpoonup u,u_n \overset{\ast }{\rightharpoonup }u$) stand for the term $u_n$ strongly (resp. weakly, weakly-$\ast$) converges to $u$ as $n \to \infty$ in an appropriate normed space (from the context), and $\langle \cdot,\cdot\rangle$ stands for the duality pairing between that normed space and its dual. An open ball in $\R^N$ centered at $x_0$ with radius $\epsilon$ is denoted by $B_\epsilon(x_0)$. The notations $C_i$ ($i\in\N$) always stand for a positive constant given from the assumptions or dependent only on the data.
 
 In the following, the functions $\mathcal{H}$ and $\mathcal{B}$ are respectively defined as
 \begin{equation}\label{def_H}
 	\mathcal{H}(x,t):=t^{p(x)} +a(x)t^{q(x)}\ \text{for} \ (x,t)\in \overline{\Omega}\times [0,\infty)
 \end{equation}
 and
 \begin{equation}\label{def_B}\mathcal{B}(x,t):=c_1(x)|t|^{r_1(x)}+c_2(x)a(x)^{\frac{r_2(x)}{q(x)}}|t|^{r_2(x)}\ \text{for} \ (x,t)\in \overline{\Omega}\times \R,
 \end{equation}
 where the involved functions satisfy the following assumptions:
 \begin{itemize}
 	\item [$(\mathcal{H}_A)$] $a,p,q \in  C^{0,1}(\close)$ such that $a(x) \geq 0,\, 1<p(x)<q(x)<N$ and $\displaystyle \frac{q(x)}{p(x)}<1+\frac{1}{N}$  for all $x \in \close$.
 	\item [($\mathcal{H}_B$)] $c_1,c_2\in L^\infty(\Omega)$ and $r_1,r_2 \in C_+(\overline{\Omega})$ such that $c_1(x)>0$, $c_2(x)\geq 0$, $q(x)<r_1(x)\leq p^\ast(x)$,  $p^\ast(x)-r_1(x)=q^\ast(x)-r_2(x)$ for all $x \in \close$ and
 	$$\mathcal{C}:=\{x \in \overline{\Omega}:\, r_1(x)=p^\ast(x)\ \text{and}\ r_2(x)=q^\ast(x)\} \neq \emptyset.$$
 \end{itemize}
 Under these assumptions, it holds that $\Wpzero{\mathcal{H}}\hookrightarrow L^{\mathcal{B}}(\Omega)$; hence,
 \begin{equation}\label{S}
 	S:=\underset{\phi\in \Wpzero{\mathcal{H}}\setminus\{0\}}{\inf}
 	\frac{\|\phi\|}{\| \phi \|_{\mathcal{B}}}>0,
 \end{equation}
see Section~\ref{Pre} for the definitions and properties of the Musielak-Orlicz space $\left(L^{\mathcal{B}}(\Omega),\|\cdot\|_{\mathcal{B}}\right)$ and the Musielak-Orlicz-Sobolev space $\left(\Wpzero{\mathcal{H}},\|\cdot\|\right)$. 
 \subsection{The concentration-compactness principle for $\Wpzero{\mathcal{H}}$}${}$

Let $\mathcal{M}(\overline{\Omega})$ denote the space of Radon measures on $\overline{\Omega}$, namely, the dual space of $C(\overline{\Omega})$. By the Riesz representation theorem, for each $\mu\in \mathcal{M}(\overline{\Omega}),$ there is a unique signed Borel measure on $\overline{\Omega}$, still denoted by $\mu$ itself, such that
$$\langle \mu,f\rangle=\int_{\overline{\Omega}}f\diff\mu,\quad \forall f\in C(\overline{\Omega}).$$
The space $L^1(\Omega)$ is identified with a subspace of $\mathcal{M}(\overline{\Omega})$ through the mapping
$T:\ L^1(\Omega)\to \mathcal{M}(\overline{\Omega})$  defined as 
$$\langle Tu,f\rangle=\int_{\Omega}uf\diff x \ \ \forall u\in L^1(\Omega), \ \forall f\in C(\overline{\Omega})$$
(see, e.g., \cite[p. 116]{Bre.Book}).

	\vskip5pt
	The following theorem is a Lions type concentration-compactness principle for the Musielak-Orlicz-Sobolev space $\Wpzero{\mathcal{H}}$. 

	%======================STATEMENT OF THE CCP=========================%
\begin{theorem}\label{Theo.ccp} %{\rm \textbf{(The concentration-compactness principle for $\Wpzero{\mathcal{H}}$)}} 
	Let $(\mathcal{H}_A)$ and $(\mathcal{H}_B)$ hold.  Let $\{u_n\}_{n\in\mathbb{N}}$ be a bounded sequence in $\Wpzero{\mathcal{H}}$ such that
           \begin{gather}
			u_n \rightharpoonup u \quad \text{in}\quad  \Wpzero{\mathcal{H}}, \label{ccp.weak}\\
			\mathcal{H}(\cdot,|\nabla u_n|) \overset{\ast }{\rightharpoonup } \mu\quad \text{in}\quad \mathcal{M}(\overline{\Omega}), \label{ccp.mu}\\
			\mathcal{B}(\cdot,u_n)\overset{\ast }{\rightharpoonup }\nu\quad \text{in}\quad \mathcal{M}(\overline{\Omega}). \label{ccp.nu}
		\end{gather}
		Then, there exist $\{x_i\}_{i\in I}\subset \mathcal{C}$ of distinct points and $\{\nu_i\}_{i\in I}, \{\mu_i\}_{i\in I}\subset (0,\infty),$ where $I$ is at most countable, such that
		\begin{gather}
			\nu=\mathcal{B}(\cdot,u) + \sum_{i\in I}\nu_i\delta_{x_i},\label{T.ccp.form.nu}\\
			\mu \geq \mathcal{H}(\cdot,|\nabla u|) + \sum_{i\in I} \mu_i \delta_{x_i},\label{T.ccp.form.mu}\\
			S \min \left\{\nu_i^{\frac{1}{p^\ast(x_i)}},\nu_i^{\frac{1}{q^\ast(x_i)}}  \right\}\leq \max \left\{\mu_i^{\frac{1}{p(x_i)}},\mu_i^{\frac{1}{q(x_i)}}\right\}, \quad \forall i\in I,\label{T.ccp.nu_mu}
		\end{gather}
		where $\delta_{x_i}$ is the Dirac mass at $x_i$ and $S$ is given by \eqref{S}.
		
	\end{theorem}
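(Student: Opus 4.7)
The plan is to adapt P.-L.~Lions's concentration-compactness scheme to the double-phase Musielak-Orlicz setting, following the variable-exponent version of Bonder. The argument proceeds in four logical stages. First, I would reduce to the case $u\equiv 0$ by setting $v_n:=u_n-u$: since $v_n\weak 0$ in $\Wpzero{\mathcal{H}}$ and $\Wpzero{\mathcal{H}}\hookrightarrow\!\hookrightarrow L^{\mathcal{H}}(\Omega)$, one has $v_n\to 0$ in $L^{\mathcal{H}}(\Omega)$ and a.e.\ on $\Omega$. A Brezis--Lieb-type computation applied to the modulars defining $\mathcal{B}$ and $\mathcal{H}$ then shows that $\tilde{\nu}:=\nu-\mathcal{B}(\cdot,u)$ and $\tilde{\mu}:=\mu-\mathcal{H}(\cdot,|\nabla u|)$ are nonnegative Radon measures. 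Identity \eqref{T.ccp.form.nu} and inequality \eqref{T.ccp.form.mu} will then follow from proving that $\tilde{\nu}$ is purely atomic with atoms in $\mathcal{C}$ and that $\tilde{\mu}$ dominates the corresponding atomic part.

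The core step is a reverse H\"older-type inequality. For every $\varphi\in C^\infty_c(\R^N)$, I test the embedding \eqref{S} on $\varphi v_n$:
\begin{equation*}
S\,\|\varphi v_n\|_{\mathcal{B}}\leq \|\nabla(\varphi v_n)\|_{\mathcal{H}}\leq \|\varphi\nabla v_n\|_{\mathcal{H}}+\|v_n\nabla\varphi\|_{\mathcal{H}}.
\end{equation*}
Since $v_n\to 0$ in $L^{\mathcal{H}}(\Omega)$, the last term vanishes as $n\to\infty$, and passing to the limit gives
\begin{equation*}
S\,\|\varphi\|_{L^{\mathcal{B}}(\close,\tilde{\nu})}\leq \|\varphi\|_{L^{\mathcal{H}}(\close,\tilde{\mu})},
\end{equation*}
where the Luxemburg norms are built from the measures $\tilde{\nu},\tilde{\mu}$ in place of Lebesgue measure. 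The standard Lions localization then forces $\tilde{\nu}$ to be purely atomic, yielding a countable family $\{x_i\}_{i\in I}\subset \close$ and $\nu_i>0$ with $\tilde{\nu}=\sum_{i\in I}\nu_i\delta_{x_i}$. Testing the weak-$*$ limit against bumps concentrated at each $x_i$ gives $\tilde{\mu}(\{x_i\})=:\mu_i>0$, hence \eqref{T.ccp.form.mu}. That $x_i\in\mathcal{C}$ is shown by contradiction: if $x_0\notin\mathcal{C}$, then either $r_1(\cdot)<p^*(\cdot)$ or $r_2(\cdot)<q^*(\cdot)$ on a neighborhood of $x_0$, so in that neighborhood the target Musielak-Orlicz embedding becomes subcritical and locally compact, precluding an atom. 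Finally, evaluating the Luxemburg norms at $x_i$ converts the above inequality into \eqref{T.ccp.nu_mu}, with the $\min$ and $\max$ appearing because of the two competing exponents.

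The principal obstacle is the passage from the reverse H\"older inequality to the pointwise inequality \eqref{T.ccp.nu_mu}. The double-phase Luxemburg norm does not split as a sum of $p(\cdot)$- and $q(\cdot)$-norms, and the norm-modular correspondence is power-dependent with two coexisting exponents. At an atom $x_i$, depending on whether $a(x_i)>0$ and on whether the modular at scale $\varepsilon$ is above or below $1$, one must choose the correct exponent among $p(x_i),q(x_i)$ on the right and $p^*(x_i),q^*(x_i)$ on the left when converting norms to powers; this is exactly what produces the $\max/\min$ formulation in \eqref{T.ccp.nu_mu}. Handling both regimes simultaneously, and using the Lipschitz continuity of $a,p,q$ to freeze the exponents at $x_i$ as the support of the bump shrinks, is the technically most demanding part of the proof.
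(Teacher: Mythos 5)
Your overall skeleton (pass to $v_n=u_n-u$, Brezis--Lieb for the $\mathcal{B}$-part, a reverse H\"older inequality fed into the Lions localization lemma, bump functions at the atoms, exclusion of atoms outside $\mathcal{C}$ by local compactness) is the same as the paper's, but two steps as you state them do not go through. First, you invoke a Brezis--Lieb computation ``applied to the modulars defining $\mathcal{B}$ \emph{and} $\mathcal{H}$'' to identify $\tilde\mu:=\mu-\mathcal{H}(\cdot,|\nabla u|)$ as the limit object attached to $v_n$. Brezis--Lieb requires a.e.\ convergence, and weak convergence $u_n\rightharpoonup u$ in $\Wpzero{\mathcal{H}}$ gives $v_n\to0$ a.e.\ but gives \emph{no} a.e.\ convergence of $\nabla u_n$; so $\mathcal{H}(\cdot,|\nabla v_n|)$ cannot be identified with $\mu-\mathcal{H}(\cdot,|\nabla u|)$ in the limit. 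The paper only extracts an abstract subsequential weak-$*$ limit $\bar\mu$ of $\mathcal{H}(\cdot,|\nabla v_n|)$ and uses it on the right-hand side of the reverse H\"older inequality; the inequality $\mu\geq\mathcal{H}(\cdot,|\nabla u|)$ in \eqref{T.ccp.form.mu} is obtained separately, at the very end, from convexity and weak lower semicontinuity of $w\mapsto\int_\Omega\phi\,\mathcal{H}(x,|\nabla w|)\diff x$, not from Brezis--Lieb. As written, your reverse H\"older inequality with $\tilde\mu$ on the right has no derivation.

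Second, and more seriously, your ``core step'' asserts $S\|\varphi\|_{L^{\mathcal{B}}(\close,\tilde\nu)}\leq\|\varphi\|_{L^{\mathcal{H}}(\close,\tilde\mu)}$ and then appeals to ``the standard Lions localization.'' Neither half is available in that form. Passing to the limit in $S\|\varphi v_n\|_{\mathcal{B}}\leq\|\varphi\nabla v_n\|_{\mathcal{H}}+o(1)$ does not produce Musielak--Orlicz norms with respect to the limit measures, because $\mathcal{B}(x,\varphi v_n)\neq\mathcal{B}(x,\varphi)\,\mathcal{B}(x,v_n)$ and $\mathcal{H}(x,\varphi|\nabla v_n|)\neq\mathcal{H}(x,\varphi)\,\mathcal{H}(x,|\nabla v_n|)$ (the weights $a$, $c_1$, $c_2a^{r_2/q}$ do not factor), so the modulars of the products are not the $\varphi$-modulars against $\diff\bar\nu_n$, $\diff\bar\mu_n$. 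Moreover, the localization lemma used here (Bonder--Silva type, Lemma~\ref{L.reserveHolder}) requires two \emph{single} variable exponents with a strict pointwise gap $s(x)<t(x)$; it has no version for general Musielak--Orlicz norms against measures. The actual crux of the paper's proof is precisely the reduction you label ``the principal obstacle'' and leave unresolved: using Young's inequality together with a uniform bound $M$ on the modulars of $v_n$, one bounds $\|\varphi v_n\|_{\mathcal{B}}$ from below by (a constant times) $\|\varphi\|_{L^{r_1(\cdot)}_{\bar\nu_n}(\close)}$ and $\|\varphi\nabla v_n\|_{\mathcal{H}}$ from above by (a constant times) $\|\varphi\|_{L^{q(\cdot)}_{\bar\mu_n}(\close)}$, then passes to the limit via Lemma~\ref{L.convergence}; only then does the gap $q(x)<r_1(x)$ from $(\mathcal{H}_B)$ make Lemma~\ref{L.reserveHolder} applicable and yield atomicity of $\bar\nu$. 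Similarly, for \eqref{T.ccp.nu_mu} one must test \eqref{S} with $\phi_{i,\epsilon}u_n$, control the commutator term $\|u\nabla\phi_{i,\epsilon}\|_{\mathcal{H}}\to0$ as $\epsilon\to0^+$ via H\"older with exponents $\frac{p^*(\cdot)}{p(\cdot)}$, $\frac{N}{p(\cdot)}$ (and the $q$-analogues) using $u\in L^{\mathcal{G}^*}(\Omega)$, and use localized infima/suprema of the exponents before letting $\epsilon\to0^+$; none of this is supplied. Without these reductions the proposal does not constitute a proof.
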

\begin{remark}\rm It is worth pointing out that Theorem~\ref{Theo.ccp} is a direct extension of \cite[Theorem 1.1]{BS10} by taking $a(\cdot)\equiv 0$ and $c_1(\cdot)\equiv 1$.
\end{remark}

\subsection{The multiplicity of solutions}${}$

By employing Theorem~\ref{Theo.ccp}, we investigate the  multiplicity of solutions for the following problem
	\begin{equation}\label{e1.1'}
	\begin{cases}
		- M\left(\int_\Omega \mathcal{A}(x,\nabla u)\diff x\right)\operatorname{div}A(x,\nabla u) = \lambda f(x,u) +\theta B(x,u)& ~\text{in}~ \Omega ,\\
		u=0& ~\text{on} ~ \partial \Omega 
	\end{cases}
\end{equation}
under the assumptions ($\mathcal{H}_A$) and ($\mathcal{H}_B$), where $B$ is a critical term of the form
$$B(x,t):=c_1(x)|t|^{r_1(x)-2}t+c_2(x)a(x)^{\frac{r_2(x)}{q(x)}}|t|^{r_2(x)-2}t,\quad (x,t) \in \overline{\Omega}\times \mathbb{R}.$$ Moreover, the Kirchhoff term $M$ and the nonlinear term $f$ satisfy the following assumptions.
\begin{itemize}
	\item[$(\mathcal{M})$] $M:[0,+\infty) \to \R$ is a real function such that $M$ is continuous and non-decreasing on some interval $[0,\tau_0)$ and $m_0:=M(0)>0$.
		\item[$(\mathcal{F}_0)$] There exists a function $\alpha \in C_+(\overline\Omega)$ such that $q^+<\alpha^-\leq \alpha(x) < r_1(x)$ for all $x\in \overline\Omega$ and 
	$$
	|f(x,t)| \leq C_1\left(1+|t|^{\alpha(x)-1}\right) \ \ \text{for a.a.} \ x\in\Omega\ \text{and all} \ t\in \R.$$
	\item[$(\mathcal{F}_1)$] 	$f(x,-t)=-f(x,t)$  for a.a. $x\in\Omega$ and all $t\in\R$.
\end{itemize}
By a solution $u$ of  problem \eqref{e1.1'} we mean $u\in W_0^{1,\mathcal{H}}(\Omega)$ such that
\begin{multline*}
	M\left(\int_\Omega \mathcal{A}(x,\nabla u)\diff x\right)\int_\Omega A(x,\nabla u)\cdot\nabla v\,\diff x-\lambda\int_\Omega f(x,u)v\,\diff x\\-\theta\int_\Omega B(x,u)v\,\diff x=0,\quad \forall v\in \Wpzero{\mathcal{H}}.
\end{multline*}
This definition is well defined in view of the embedding results on $\Wpzero{\mathcal{H}}$ that are provided in Section~\ref{Pre}. In the following, we denote  
$$F(x,t) := \int_0^tf(x,s)\diff s\ \ \text{for a.a.} \ x\in \Omega \ \text{and all} \ t\in\R.$$
Clearly, $(\mathcal{F}_0)$ implies that
\begin{equation}\label{q-r1}
	q^+<r_1^-
\end{equation}
and 
\begin{equation}\label{F}
	|F(x,t)| \leq 2C_1\left(1+|t|^{\alpha(x)}\right) \ \ \text{for a.a.} \ x\in\Omega\ \text{and all} \ t\in \R.
\end{equation}
In most our multiplicity results, we will need the following condition:
\begin{itemize}
	\item[$(\mathcal P)$] $\kappa_1:=\inf_{\varphi\in C_c^\infty(\Omega)\setminus\{0\}}\frac{\int_\Omega|\nabla\varphi|^{p(x)}\diff x}{\int_\Omega|\varphi|^{p(x)}\diff x}$ \ is positive.
\end{itemize}
 Obviously, $(\mathcal P)$ automatically holds when $p(\cdot)$ is constant. When $p(\cdot)$ is non-constant, $(\mathcal P)$ also holds if there exists a vector $\ell\in\mathbb{R}^N\setminus\{0\}$ such that for any $x\in\Omega$, $\eta(t):=p(x + t\ell)$ is monotone for $t \in I_x:= \{t\in\R: \, x + t\ell\in\Omega\}$. Note that $(\mathcal P)$ does not hold if there is an open ball $B_{\epsilon}(x_0)\subset\Omega$ such that $p(x_0)<$ (or $>$) $p(x)$ for all $x\in\partial B_{\epsilon}(x_0)$, see \cite[Theorems 3.1 and 3.3]{FZZ05}. From the definitions of $\kappa_1$ and $\Wpzero{\mathcal{H}}$, it holds
\begin{equation}\label{mu1}
	\kappa_1 \int_\Omega|\varphi|^{p(x)}\diff x\leq \int_\Omega|\nabla\varphi|^{p(x)}\diff x,\quad \forall \varphi\in \Wpzero{\mathcal{H}}.
\end{equation}

Next, we are going to state our multiplicity results for \eqref{e1.1'} by considering the right-hand side of the generalized concave-convex/ superlinear type.
	
\subsubsection{\textbf{Problems of a generalized concave-convex type}}${}$

We first investigate the multiplicity of solutions to problem \eqref{e1.1'} when $\theta=1$ and the right-hand side is of a generalized concave-convex type. More precisely, in addition to $(\mathcal{F}_0)$ and $(\mathcal{F}_1)$, we further assume the following assumption.
	\begin{itemize}

		\item[$(\mathcal{F}_2)$] There exist a function $\sigma\in C_+(\overline\Omega)$ with $\sigma^+<p^-$ and a ball $B\subset\Omega$ such that
		\begin{itemize}
			\item [(i)]  $0\leq F(x,t)\leq C_2|t|^{\sigma(x)}+C_3|t|^{p(x)}$ and $r_1^- F(x,t)-f(x,t)t\leq C_4|t|^{\sigma(x)}+C_5|t|^{p(x)}$ for a.a. $x\in\Omega$ and all $t\in\R$;
			\item [(ii)]  $C_6|t|^{\sigma(x)}\leq F(x,t)$ for a.a. $x\in B$ and all $t\in\R$.
		\end{itemize} 
	\end{itemize}
 We have the existence of infinitely many solutions for generalized concave-convex problems as follows.
 	\begin{theorem}\label{Theo.cc} %{\rm \textbf{(Infinitely many solutions for the generalized concave-convex type problem)}}
	Let  $(\mathcal{H}_A)$, $(\mathcal{H}_B)$, $(\mathcal P)$,  $(\mathcal{M})$, $(\mathcal{F}_0)$, $(\mathcal{F}_1)$ and $(\mathcal{F}_2)$ hold. Then, there exists $\lambda_\ast>0$ such that for any $\lambda\in(0,\lambda_\ast)$, problem \eqref{e1.1'} with $\theta=1$ admits infinitely many solutions. Furthermore, if  $u_\lambda$ is a solution corresponding to $\lambda$, then it holds that 
		$$\lim_{\lambda \to 0^+}\|u_\lambda\| = 0.$$
	\end{theorem}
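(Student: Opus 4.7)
The plan is to apply Kajikiya's symmetric critical point theorem to a truncated energy functional associated with \eqref{e1.1'}, using the concentration-compactness principle (Theorem~\ref{Theo.ccp}) to restore compactness at small negative levels. Consider
$$\Phi_\lambda(u) = \widehat{M}\!\left(\int_\Omega \mathcal{A}(x,\nabla u)\,\diff x\right) - \lambda\int_\Omega F(x,u)\,\diff x - \int_\Omega \widetilde{\mathcal{B}}(x,u)\,\diff x,$$
where $\widehat{M}(t):=\int_0^t M(s)\,\diff s$ and $\widetilde{\mathcal{B}}(x,t):=\frac{c_1(x)}{r_1(x)}|t|^{r_1(x)}+\frac{c_2(x)}{r_2(x)}a(x)^{r_2(x)/q(x)}|t|^{r_2(x)}$. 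Because $M$ is only controlled on $[0,\tau_0)$ and $\Phi_\lambda$ is unbounded below, I would first fix $t_0\in(0,\tau_0)$, extend $M$ by the constant $M(t_0)$ on $[t_0,\infty)$ to obtain $\widetilde M$, and multiply the critical integrand by a smooth cut-off $\chi(\|u\|/\rho_0)$ (with $\chi\equiv 1$ on $[0,1]$, $\chi\equiv 0$ on $[2,\infty)$), yielding a modified functional $\widetilde\Phi_\lambda$ that coincides with $\Phi_\lambda$ on $\{\|u\|\leq\rho_0\}$ and is bounded below globally, with $\rho_0$ small enough that $\int_\Omega \mathcal{A}(x,\nabla u)\,\diff x<t_0$ whenever $\|u\|\leq\rho_0$.

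The second step is an \emph{a priori} estimate on negative-energy critical points of $\widetilde\Phi_\lambda$: combining $\widetilde\Phi_\lambda(u)<0$ with the Euler identity $\langle\widetilde\Phi_\lambda'(u),u\rangle=0$, then invoking $(\mathcal{M})$, $(\mathcal{F}_0)$, $(\mathcal{F}_2)\text{(i)}$ and the modular-norm inequalities from Section~\ref{Pre}, the $r_1^-$-order superlinear terms cancel and the $\|u\|^{p^-}$-contributions are absorbed into the concave term of order $\lambda\|u\|^{\sigma^+}$ (since $\sigma^+<p^-$), yielding $\|u\|\leq C\lambda^{1/(p^--\sigma^+)}$. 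Choosing $\lambda_*$ so that $C\lambda_*^{1/(p^--\sigma^+)}<\rho_0/2$, every negative-energy critical point of $\widetilde\Phi_\lambda$ lies inside the coincidence set and hence solves \eqref{e1.1'}; the same bound immediately delivers $\|u_\lambda\|\to 0$ as $\lambda\to 0^+$.

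The main obstacle is a local Palais--Smale condition for $\widetilde\Phi_\lambda$ at negative levels of small absolute value. For $(u_n)\subset\Wpzero{\mathcal{H}}$ with $\widetilde\Phi_\lambda(u_n)\to c<0$ and $\widetilde\Phi_\lambda'(u_n)\to 0$, boundedness follows from the standard combination $r_1^-\widetilde\Phi_\lambda(u_n)-\langle\widetilde\Phi_\lambda'(u_n),u_n\rangle$ using $q^+<r_1^-$, $(\mathcal{F}_2)\text{(i)}$, and $(\mathcal{M})$. Extracting $u_n\rightharpoonup u$, Theorem~\ref{Theo.ccp} produces $\mu\geq\mathcal{H}(\cdot,|\nabla u|)+\sum_i\mu_i\delta_{x_i}$ and $\nu=\mathcal{B}(\cdot,u)+\sum_i\nu_i\delta_{x_i}$ with $\{x_i\}\subset\mathcal{C}$. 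Testing $\langle\widetilde\Phi_\lambda'(u_n),u_n\phi_{\epsilon,i}\rangle\to 0$ with a radial cut-off $\phi_{\epsilon,i}$ near $x_i$, exploiting the identity $B(x,u)u=\mathcal{B}(x,u)$, letting $\epsilon\to 0$, and handling the $f$-term by compact subcritical embedding, one reaches $m_0\mu_i\leq\nu_i$; combining this with \eqref{T.ccp.nu_mu} forces the dichotomy $\nu_i=0$ or $\nu_i\geq\nu_*>0$ for an explicit $\nu_*$ depending only on $m_0,S$ and the exponents at $x_i$. Shrinking $\lambda_*$ further so that the concentration threshold on $c$ exceeds $-|c|$, concentration is excluded; a Brezis--Lieb argument and the $(S_+)$-property of $\operatorname{div}A(x,\nabla\cdot)$ (as established for spaces of this type in \cite{crespo2022new}) then give $u_n\to u$ strongly.

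To close, Kajikiya's theorem applies because $\widetilde\Phi_\lambda$ is even (by $(\mathcal{F}_1)$ together with the even dependence of $\mathcal{A},\widetilde{\mathcal{B}}$ on their second variables), is bounded below, vanishes at $0$, and satisfies the local PS condition by the previous step. For each $k\in\mathbb{N}$ I would exhibit a $k$-dimensional subspace $E_k\subset C_c^\infty(B)$ (on which all norms are equivalent) and a small radius $\rho_k>0$ with $\sup_{u\in E_k,\,\|u\|=\rho_k}\widetilde\Phi_\lambda(u)<0$: by $(\mathcal{F}_2)\text{(ii)}$ the contribution $-\lambda\int_B F(x,u)\,\diff x$ controls $-c_k\lambda\|u\|^{\sigma^-}$, which overtakes the $\|u\|^{p^-}$- and $\|u\|^{r_1^-}$-order terms for small $\|u\|$ since $\sigma^+<p^-\leq r_1^-$. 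Kajikiya's theorem then produces infinitely many critical points of $\widetilde\Phi_\lambda$ with negative critical values accumulating at $0$; by the \emph{a priori} bound each is a solution of \eqref{e1.1'} with norm tending to $0$ as $\lambda\to 0^+$.
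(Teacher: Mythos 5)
Your overall strategy is essentially the paper's: truncate the Kirchhoff term below some $t_0<\tau_0$, truncate the energy so that it is bounded below and coincides with $\Phi_\lambda$ on a small set, prove a local $\textup{(PS)}_c$ condition at levels below a $\lambda$-dependent positive threshold via Theorem~\ref{Theo.ccp} (boundedness, then $m_0\mu_i\leq\nu_i$, then the lower bound on $\mu_i$, then Brezis--Lieb and the $(\textup{S}_+)$-property), and run a symmetric minimax scheme on finite-dimensional subspaces supported in the ball $B$ of $(\mathcal{F}_2)$, where $F\geq C_6|t|^{\sigma(x)}$ and $\sigma^+<p^-$ make the functional negative on small spheres. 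Whether one invokes Kajikiya's theorem or, as the paper does, the genus together with the deformation lemma is an interchangeable packaging.

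The genuine gap is in your second step, the localization of negative-energy objects inside the coincidence set. First, your a priori bound is derived from the Euler identity, but for a critical point in the transition annulus $\rho_0<\|u\|<2\rho_0$ the derivative of the cut-off contributes the term $\frac{1}{r_1^-}\chi'\!\left(\|u\|/\rho_0\right)\frac{\|u\|}{\rho_0}\int_\Omega\widetilde{\mathcal B}(x,u)\,\diff x\leq 0$ to the combination $\widetilde\Phi_\lambda(u)-\frac{1}{r_1^-}\langle\widetilde\Phi_\lambda'(u),u\rangle$; it has the unfavorable sign and size comparable to the critical term, so the claimed cancellation does not go through there. Second, what the deformation/Kajikiya machinery (and the transfer of the $\textup{(PS)}$ analysis from $\widetilde\Phi_\lambda$ to $\Phi_\lambda$) actually requires is that every point, and every $\textup{(PS)}$ sequence, at a \emph{negative level} lies where $\widetilde\Phi_\lambda=\Phi_\lambda$ --- a statement about functional values, not about critical points, which your Euler-identity estimate cannot deliver. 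The paper's remedy is value-based: from $\Phi_\lambda(u)\geq g_\lambda(\|u\|)$, with $g_\lambda$ having zeros $t_1(\lambda)<t_*<t_2(\lambda)$ and the cut-off tied to $t_1(\lambda)$ (not to a fixed $\rho_0$ determined only by $t_0$), Lemma~\ref{T_lambda(u)<0} shows that $T_\lambda(u)<0$ already forces $\int_\Omega\mathcal A(x,\nabla u)\,\diff x<t_1(\lambda)^{p^-}$; the same kind of argument (positivity of your truncated functional outside a ball of radius below $t_*$, for $\lambda$ small) repairs your step and simultaneously yields $\|u_\lambda\|\to 0$ because $t_1(\lambda)\to 0$. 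Two smaller points: composing the cut-off with the norm requires $u\mapsto\|u\|$ to be $C^1$ away from $0$, which is not established for the Musielak--Orlicz norm here --- compose with the modular $\int_\Omega\mathcal A(x,\nabla u)\,\diff x$ instead, as the paper does; and your exponent $\lambda^{1/(p^--\sigma^+)}$ is not what the estimate produces (one rather gets $\int_\Omega\mathcal H(x,|\nabla u|)\,\diff x\leq C\lambda$ after absorbing $\int_\Omega|u|^{p(x)}\,\diff x$ via $(\mathcal P)$), though this is immaterial for the conclusion.
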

\begin{example}\rm
Some examples of $f$ fulfilling $(\mathcal{F}_0)-(\mathcal{F}_2)$ are
\begin{itemize}
	\item [(i)] $f(x,t)=c_1|t|^{\delta(x)-2}t+c_2|t|^{p(x)-2}t$,
	\item [(ii)]$f(x,t)= c_1|t|^{\delta(x)-2}t\log^{\kappa(x)} (e+|t|)+c_2|t|^{m(x)-2}t$,
	\item [(iii)]$f(x,t)= c_1|t|^{n(x)-2}t$,
\end{itemize} 
where $\delta,\kappa,m\in C_+(\overline\Omega)$ satisfy $\delta^+<p^-$, $\delta(\cdot)\leq m(\cdot)\leq p(\cdot)$, $n=\varphi\delta+(1-\varphi)m$ with $\varphi\in C(\overline\Omega)$, $0\leq\varphi(\cdot)\leq 1$, $\operatorname{supp}\varphi\subset B_{2\epsilon_0}(x_0)\subset\Omega$ and $\varphi(\cdot)\equiv 1$ on $B_{\epsilon_0}(x_0)$; and $c_1>0$, $c_2\geq 0$ are constants. Note that the case (iii) gives an example that $f$ can only be $p(\cdot)$-sublinear locally.
\end{example}

\subsubsection{\textbf{Problems of a generalized superlinear type}}${}$

Finally, we investigate the multiplicity of solutions to problem \eqref{e1.1'} when $M(\cdot) \equiv 1$, $\lambda=1$ and the right-hand side is of a generalized superlinear type. More precisely, in addition to $(\mathcal{F}_0)$ and $(\mathcal{F}_1)$, we further assume the following assumptions.
\begin{itemize}
	\item [$(\mathcal{F}_3)$] With $\kappa_1$ given by $(\mathcal P)$, there exist a constant $\beta \in [q^+,r_1^-)$ and a function $e \in L^1(\Omega)$ such that
	$$\beta F(x,t)-f(x,t)t \leq \frac{\left(\beta-q^+\right)\kappa_1}{q^+}|t|^{p(x)}+e(x),\quad  \text{for a.a.} \ x\in\Omega\ \text{and all} \ t\in \R.$$
	\item [$(\mathcal{F}_4)$] There is a ball $B \subset \Omega$ such that
	$$\lim_{|t| \to +\infty}\frac{F(x,t)}{|t|^{q_B^+}}=+\infty,\quad\text{ uniformly for a.a. } x\in B,$$
	where $q^+_B=\max\limits_{x\in \overline{B}} q(x)$.
\end{itemize}
The next theorem is our multiplicity result for the generalized superlinear case.
\begin{theorem}\label{Theo.sl.M=1}
	Let $(\mathcal{H}_A), (\mathcal{H}_B)$, $(\mathcal P)$, $(\mathcal{F}_0)$, $(\mathcal{F}_1)$,  $(\mathcal{F}_3)$ and $(\mathcal{F}_4)$ hold with $c_1^{-\frac{\alpha}{r_1-\alpha}}\in L^1(\Omega)$. Then, for each $n\in \mathbb{N}$, there exists $\theta_n>0$ such that for any $\theta \in (0,\theta_n)$, problem \eqref{e1.1'} with $M(\cdot) \equiv 1$ and $\lambda=1$ possesses at least $n$ pairs of nontrivial solutions.
\end{theorem}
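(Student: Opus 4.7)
The plan is to apply Rabinowitz's symmetric mountain pass theorem to the $C^1$ functional
\[
\mathcal{E}_\theta(u) := \int_\Omega \mathcal{A}(x,\nabla u)\diff x - \int_\Omega F(x,u)\diff x - \theta\int_\Omega \widetilde{B}(x,u)\diff x, \quad u\in W_0^{1,\mathcal{H}}(\Omega),
\]
where $\widetilde{B}(x,t):=\tfrac{c_1(x)}{r_1(x)}|t|^{r_1(x)}+\tfrac{c_2(x)}{r_2(x)}a(x)^{r_2(x)/q(x)}|t|^{r_2(x)}$ is the primitive of $B(x,\cdot)$. By $(\mathcal{F}_1)$ and oddness of $B(x,\cdot)$, $\mathcal{E}_\theta$ is even, and its critical points solve \eqref{e1.1'} with $M\equiv 1$, $\lambda=1$. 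Since the critical term destroys global compactness, $\mathcal{E}_\theta$ will satisfy only a local Palais-Smale condition below an explicit threshold $c^{\ast}(\theta)$; the whole strategy is to trap the first $n$ symmetric minimax values below $c^{\ast}(\theta)$ by choosing $\theta$ small.

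First, I would verify the symmetric mountain pass geometry. Using the Musielak-Orlicz modular estimates of Section~\ref{Pre}, the subcritical embedding into $L^{\alpha(\cdot)}(\Omega)$, and \eqref{S}, for small $\|u\|$ one obtains $\mathcal{E}_\theta(u) \gtrsim \|u\|^{q^+} - \|u\|^{\alpha^-} - \theta\|u\|^{r_1^-}$, so by $q^+<\alpha^- \leq r_1^-$, $\mathcal{E}_\theta(u)\geq\alpha_0>0$ on some sphere $\|u\|=\rho_0$. For the opposite side, for each $k\in\N$ I would fix a $k$-dimensional linear subspace $V_k\subset C_c^\infty(B)$ with $B$ as in $(\mathcal{F}_4)$; the superlinearity with exponent exceeding $q_B^+$ on $B$, combined with norm equivalence on $V_k$ and $\widetilde{B}\geq 0$, forces $\mathcal{E}_\theta(u)\to-\infty$ along $V_k$ uniformly in $\theta\in(0,1]$. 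Boundedness of every $(PS)_c$-sequence $(u_m)$ then follows by combining $(\mathcal{F}_3)$ with $(\mathcal P)$: forming $\mathcal{E}_\theta(u_m)-\tfrac1\beta\langle\mathcal{E}_\theta'(u_m),u_m\rangle$, the ``bad'' $|t|^{p(x)}$-term in $(\mathcal{F}_3)$ is absorbed via \eqref{mu1} into a positive multiple of $\int\mathcal{H}(x,|\nabla u_m|)\diff x$, while the critical term appears with the positive coefficient $\theta(1/\beta-1/r_1^-)$.

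The core step is the local Palais-Smale condition via Theorem~\ref{Theo.ccp}. Applied to $(u_m)$, it yields $u_m\rightharpoonup u$ together with measures $\mu,\nu$ and possible atoms $\{x_i\}\subset\mathcal{C}$. A standard Lions-type test using a smooth cutoff $\varphi_{\varepsilon,i}$ localized near $x_i$, combined with the locality of $A$ and the uniform continuity of the exponents, identifies $\mu_i$ with $\theta\nu_i$ up to constants determined by $p(x_i),q(x_i),r_1(x_i),r_2(x_i)$; plugging this into \eqref{T.ccp.nu_mu} forces $\nu_i\geq\kappa_i\,\theta^{-\sigma_i}$ for explicit exponents $\sigma_i>0$. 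Substituting back into $c=\lim[\mathcal{E}_\theta(u_m)-\tfrac1\beta\langle\mathcal{E}_\theta'(u_m),u_m\rangle]$ yields
\[
c^{\ast}(\theta):=\inf_{x\in\mathcal{C}}\kappa'(x)\theta^{-\sigma(x)},\qquad c^{\ast}(\theta)\to+\infty \text{ as } \theta\to 0^+.
\]
For $c<c^{\ast}(\theta)$ the atom set must be empty, whence $\nu=\mathcal{B}(\cdot,u)$, and an $(S_+)$-type argument for $-\operatorname{div}A$ on $W_0^{1,\mathcal{H}}(\Omega)$ upgrades weak convergence to strong.

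Finally, for the multiplicity, I would take nested subspaces $V_1\subset\dots\subset V_n\subset C_c^\infty(B)$ and form the Rabinowitz symmetric minimax values
\[
c_{\theta,k}:=\inf_{h\in\Gamma_k}\sup_{u\in V_k,\,\|u\|\leq R_k}\mathcal{E}_\theta(h(u)),\qquad k=1,\dots,n,
\]
where $R_k$ is chosen large enough (uniformly in $\theta\in(0,1]$, using $\widetilde{B}\geq 0$) so that $\mathcal{E}_\theta\leq 0$ on $V_k\setminus B_{R_k}$, and $\Gamma_k$ is the usual family of odd continuous deformations fixing the boundary sphere. The geometric bounds above give $\alpha_0\leq c_{\theta,k}\leq M_n:=\sup_{V_n\cap\overline{B_{R_n}}}\mathcal{E}_0$, with $M_n$ finite and independent of $\theta$. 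Choosing $\theta_n$ so that $M_n<c^{\ast}(\theta)$ whenever $\theta\in(0,\theta_n)$ places every $c_{\theta,k}$ under the local PS threshold, and the standard symmetric deformation/genus argument produces $n$ pairs of nontrivial critical points. \emph{The main obstacle} will be the quantitative CCP analysis at each atom: the $\min$--$\max$ structure of \eqref{T.ccp.nu_mu} stemming from the $p^{\ast}$/$q^{\ast}$ dichotomy and the weight $a(\cdot)$ has to be disentangled to pin down each $\sigma_i$, and this is where the assumption $c_1^{-\alpha/(r_1-\alpha)}\in L^1(\Omega)$ will enter to control the subcritical contribution $F$ against the critical term when estimating $\nu_i$.
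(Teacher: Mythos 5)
Your overall architecture coincides with the paper's: the even functional $\Psi_\theta$, a CCP-based local $\textup{(PS)}_c$ condition valid below a threshold of the form $\left(\tfrac1\beta-\tfrac1{r_1^-}\right)\min\{S^{n_1},S^{n_2}\}\min\{\theta^{-\tau_1},\theta^{-\tau_2}\}-\tfrac{\|e\|_1}{\beta}\to+\infty$ as $\theta\to0^+$ (obtained from $\mu_i\le\theta\nu_i$ and \eqref{T.ccp.nu_mu}), symmetric minimax values bounded above uniformly in $\theta$, an $(\textup{S}_+)$ upgrade to strong convergence, and a choice of $\theta_n$ small. However, two steps as you describe them would fail. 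First, the small-sphere positivity $\mathcal{E}_\theta\ge\alpha_0>0$ on $\|u\|=\rho_0$ does not follow from $(\mathcal{F}_0)$--$(\mathcal{F}_4)$: these hypotheses impose no smallness of $f$ at the origin, so for instance $f(x,t)=|t|^{\varepsilon-1}t$ with tiny $\varepsilon>0$ is admissible, and then $\int_\Omega F(x,u)\,\diff x$ behaves like $\|u\|^{1+\varepsilon}$ near $0$ and dominates $\tfrac1{q^+}\|u\|^{q^+}$; hence $0$ need not be a local minimizer, the uniform lower bound $\alpha_0\le c_{\theta,k}$ collapses, and with it your guarantee that the minimax levels are positive (hence nontrivial) critical values. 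The paper avoids exactly this by bounding the minimax values from below only on $\partial B_\tau\cap Z_k$, where $Z_k$ are finite-codimensional subspaces built from a Schauder basis, using $\delta_k:=\sup\{\|v\|_{\alpha(\cdot)}:v\in Z_k,\ \|v\|\le1\}\to0$ and radii $\tau>1$ to beat the additive constants coming from the ``$+1$'' in $(\mathcal{F}_0)$; positivity (and, importantly, strict growth, which is how the paper gets $n$ \emph{distinct} critical values without invoking a genus argument at degenerate levels) holds only along a subsequence of dimensions $k_1<\dots<k_n$, and $\theta_n$ is chosen inductively against these levels.

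Second, your boundedness argument for $\textup{(PS)}_c$-sequences is not conclusive: the constant in $(\mathcal{F}_3)$ is exactly borderline, since $\tfrac1\beta\cdot\tfrac{(\beta-q^+)\kappa_1}{q^+}=\left(\tfrac1{q^+}-\tfrac1\beta\right)\kappa_1$, so absorbing the $|t|^{p(x)}$-term through \eqref{mu1} consumes the entire $p$-part of $\left(\tfrac1{q^+}-\tfrac1\beta\right)\int_\Omega\mathcal{H}(x,|\nabla u_m|)\,\diff x$ (and if $\beta=q^+$ there is nothing to spare at all); no ``positive multiple of $\int_\Omega\mathcal{H}$'' survives, only the non-coercive term $\left(\tfrac1{q^+}-\tfrac1\beta\right)\int_\Omega a(x)|\nabla u_m|^{q(x)}\,\diff x\ge0$. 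What this computation actually yields is $\int_\Omega\mathcal{B}(x,u_m)\,\diff x\le \hat C\,(1+\|u_m\|)$. To close the argument one needs the paper's second estimate: from \eqref{F} and Young's inequality, using precisely the hypothesis $c_1^{-\frac{\alpha}{r_1-\alpha}}\in L^1(\Omega)$, one controls $\int_\Omega|u_m|^{\alpha(x)}\,\diff x$ by $\int_\Omega c_1(x)|u_m|^{r_1(x)}\,\diff x$ and obtains $\|u_m\|^{p^-}\le\widetilde C\,\bigl(1+\int_\Omega\mathcal{B}(x,u_m)\,\diff x\bigr)$, which combined with the previous bound and $p^->1$ gives boundedness. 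In particular you misplace the role of $c_1^{-\frac{\alpha}{r_1-\alpha}}\in L^1(\Omega)$: it is not needed in the atom estimate for $\nu_i$ (there the CCP, $\mu_i\le\theta\nu_i$ and \eqref{T.ccp.nu_mu} suffice), but it is indispensable in this boundedness step.
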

In our last main result below, we can drop $(\mathcal P)$ in Theorem~\ref{Theo.sl.M=1} if $(\mathcal{F}_3)$ is replaced with a stronger assumption.
\begin{theorem}\label{Theo2.sl.M=1}
	Let $(\mathcal{H}_A), (\mathcal{H}_B)$, $(\mathcal{F}_0)$, $(\mathcal{F}_1)$ and $(\mathcal{F}_4)$ hold with $c_1^{-\frac{\alpha}{r_1-\alpha}}\in L^1(\Omega)$ and assume in addition that 
	\begin{itemize}
		\item [$(\mathcal{F}_5)$] There exist a constant $\beta \in [q^+,r_1^-)$ and a function $e \in L^1(\Omega)$ such that
		$$\beta F(x,t)-f(x,t)t \leq e(x),\quad \text{for a.a.} \ x\in\Omega\ \text{and all} \ t\in \R.$$
	\end{itemize}
Then, the conclusion of Theorem~\ref{Theo.sl.M=1} remains valid.
\end{theorem}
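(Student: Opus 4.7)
The plan is to follow the variational scheme of Theorem~\ref{Theo.sl.M=1} with one notable simplification: the role of $(\mathcal{P})$ there is solely to absorb the extra $\tfrac{(\beta-q^+)\kappa_1}{q^+}|t|^{p(x)}$ term allowed by $(\mathcal{F}_3)$ in a Palais--Smale calculation, and since $(\mathcal{F}_5)$ eliminates that term outright, no Poincar\'e-type inequality is required. Associate to \eqref{e1.1'} (with $M\equiv 1$, $\lambda=1$) the even $C^1$ functional
$$J_\theta(u):=\int_\Omega \mathcal{A}(x,\nabla u)\,\diff x-\int_\Omega F(x,u)\,\diff x-\theta \int_\Omega \widetilde{B}(x,u)\,\diff x,\quad u\in W_0^{1,\mathcal{H}}(\Omega),$$
where $\widetilde{B}(x,t):=\tfrac{c_1(x)}{r_1(x)}|t|^{r_1(x)}+\tfrac{c_2(x)}{r_2(x)}a(x)^{r_2(x)/q(x)}|t|^{r_2(x)}$; its critical points are precisely the weak solutions under consideration.

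To show boundedness of a Palais--Smale sequence $(u_n)$, compute $\beta J_\theta(u_n)-\langle J_\theta'(u_n),u_n\rangle$. The double-phase piece equals $\int_\Omega\bigl[(\tfrac{\beta}{p(x)}-1)|\nabla u_n|^{p(x)}+(\tfrac{\beta}{q(x)}-1)a(x)|\nabla u_n|^{q(x)}\bigr]\,\diff x$; both coefficients are nonnegative since $\beta\geq q^+>p^+$, and, with a small refinement on the degenerate set $\{q(x)=\beta\}$, their combination dominates a positive multiple of $\int_\Omega \mathcal{H}(x,|\nabla u_n|)\,\diff x$. The $f$-contribution is bounded below by $-\|e\|_{L^1(\Omega)}$ via $(\mathcal{F}_5)$, and since $p^\ast-r_1=q^\ast-r_2$ combined with $p<q<N$ forces $r_1(x)\leq r_2(x)$ and hence $\beta<r_1^-\leq r_1(x)\leq r_2(x)$, the $B$-contribution $\int_\Omega[B(x,u_n)u_n-\beta\widetilde{B}(x,u_n)]\,\diff x$ is pointwise nonnegative. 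Rearranging bounds the modular $\int_\Omega \mathcal{H}(x,|\nabla u_n|)\,\diff x$, and hence $\|u_n\|$.

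For the local $(PS)_c$-condition, extract $u_n\rightharpoonup u$ and apply Theorem~\ref{Theo.ccp} to produce measures $\mu,\nu$ with concentrated parts on $\{x_i\}_{i\in I}\subset\mathcal{C}$. Testing $J_\theta'(u_n)$ against $\psi_{i,\epsilon}u_n$ for a standard cut-off of $B_\epsilon(x_i)$ and passing $n\to\infty$, then $\epsilon\to 0^+$ --- the subcritical $f$-piece and the terms carrying $\nabla\psi_{i,\epsilon}$ vanish in the limit --- yields $\mu_i\leq\theta\nu_i$ for every $i\in I$. Coupled with \eqref{T.ccp.nu_mu}, for every $i$ with $\nu_i>0$ this gives a lower bound $\nu_i\geq c_\ast\theta^{-\gamma_i}$ with $\gamma_i>1$ determined by which branch of the min/max in \eqref{T.ccp.nu_mu} is active at $x_i$. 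Substituting in
$$c=\lim_n\Bigl[J_\theta(u_n)-\tfrac{1}{\beta}\langle J_\theta'(u_n),u_n\rangle\Bigr]\geq \theta\bigl(\tfrac{1}{\beta}-\tfrac{1}{r_1^-}\bigr)\sum_{i\in I}\nu_i-\tfrac{1}{\beta}\|e\|_{L^1(\Omega)}$$
produces a threshold $c^\ast(\theta)$ with $c^\ast(\theta)\to+\infty$ as $\theta\to 0^+$, below which $I=\emptyset$; then $\nu=\mathcal{B}(\cdot,u)$, and the $(S_+)$-property of $u\mapsto -\operatorname{div}A(x,\nabla u)$ on $W_0^{1,\mathcal{H}}(\Omega)$ upgrades $u_n\rightharpoonup u$ to strong convergence.

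Finally, apply the symmetric mountain-pass / Krasnoselskii genus scheme for even $C^1$ functionals. For each $n\in\mathbb{N}$, use $(\mathcal{F}_4)$ to fix an $n$-dimensional subspace $X_n\subset W_0^{1,\mathcal{H}}(\Omega)$ of functions supported in the ball $B$; on $X_n$ all norms are equivalent and the $q_B^+$-superlinearity of $F$ on $B$ forces $J_\theta\to -\infty$ at infinity along $X_n$, uniformly in $\theta\in(0,1]$, so that $\sup_{X_n}J_\theta\leq M_n$ for some $M_n$ independent of $\theta$. A standard genus-based min-max argument then produces $n$ distinct critical values of $J_\theta$ in an interval $[\alpha_n,M_n]$ with $\alpha_n>0$ (the positivity of $\alpha_n$ rests on the mountain-pass-style geometry near the origin, where $c_1^{-\alpha/(r_1-\alpha)}\in L^1(\Omega)$ allows the subcritical $f$-term to be absorbed into the critical $B$-term via Young's inequality). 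Choosing $\theta_n>0$ so small that $M_n<c^\ast(\theta_n)$ secures the $(PS)_c$-condition at each of these levels by the previous paragraph, producing $n$ pairs of nontrivial solutions for every $\theta\in(0,\theta_n)$. The main difficulty lies in the quantitative step of the previous paragraph: distinguishing cases in \eqref{T.ccp.nu_mu} according to whether the min and max are attained on the $p^\ast$/$p$ or the $q^\ast$/$q$ piece at each $x_i$, and extracting an exponent $\gamma_i>1$ with $\inf_i\gamma_i>1$, so that $c^\ast(\theta)$ outruns the $\theta$-uniform upper bound $M_n$ as $\theta\to 0^+$.
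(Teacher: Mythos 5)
Your opening observation is exactly the paper's point: $(\mathcal P)$ enters Theorem~\ref{Theo.sl.M=1} only through the term $\frac{(\beta-q^+)\kappa_1}{q^+}|t|^{p(x)}$ in $(\mathcal{F}_3)$, and $(\mathcal{F}_5)$ removes it, so the proof of Theorem~\ref{Theo.sl.M=1} goes through verbatim except in the Palais--Smale lemma. However, the way you then execute the two key steps has genuine gaps. First, in the boundedness of a $\textup{(PS)}_c$ sequence you compute $\beta J_\theta(u_n)-\langle J_\theta'(u_n),u_n\rangle$ and claim the gradient contribution $\int_\Omega[(\tfrac{\beta}{p(x)}-1)|\nabla u_n|^{p(x)}+(\tfrac{\beta}{q(x)}-1)a(x)|\nabla u_n|^{q(x)}]\diff x$ dominates a positive multiple of $\int_\Omega\mathcal{H}(x,|\nabla u_n|)\diff x$ ``with a small refinement on $\{q(x)=\beta\}$''. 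When $\beta=q^+$ (allowed, and the case in all the paper's examples) and $q$ attains $q^+$ on a region where $a>0$, the coefficient of $a|\nabla u_n|^{q(x)}$ vanishes there and nothing in that combination controls $\int a|\nabla u_n|^{q(x)}$; no refinement of that identity can produce it, so your route to bounding $\|u_n\|=\|\nabla u_n\|_{\mathcal H}$ breaks down. The paper's argument is different: it keeps the nonnegative term $\theta(\tfrac1\beta-\tfrac1{r_1^-})\int_\Omega\mathcal{B}(x,u_n)\diff x$ to get $\int_\Omega\mathcal{B}(x,u_n)\diff x\leq \hat C(1+\|u_n\|)$, and then bounds $\tfrac1{q^+}(\|u_n\|^{p^-}-1)\leq \Psi_\theta(u_n)+\int_\Omega F(x,u_n)\diff x+\tfrac{\theta}{r_1^-}\int_\Omega\mathcal{B}(x,u_n)\diff x$, absorbing $\int|u_n|^{\alpha(x)}$ into $\int\mathcal{B}$ by Young's inequality, which is precisely where the hypothesis $c_1^{-\frac{\alpha}{r_1-\alpha}}\in L^1(\Omega)$ is used (you never use it in this step).

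Second, your multiplicity step asserts ``a standard genus-based min-max argument produces $n$ distinct critical values in $[\alpha_n,M_n]$ with $\alpha_n>0$'', with positivity justified by mountain-pass geometry near the origin after absorbing the $f$-term into the critical $B$-term. Under $(\mathcal{F}_0)$ alone, $F(x,t)$ may behave like $|t|$ near $t=0$ (e.g.\ an odd $f$ with $f(x,t)\to\pm 1$ as $t\to 0^\pm$ is compatible with $(\mathcal{F}_0)$, $(\mathcal{F}_1)$, $(\mathcal{F}_4)$, $(\mathcal{F}_5)$), so there is no positivity of $J_\theta$ on small spheres; moreover, absorbing $f$ into the $\theta$-weighted critical term produces Young constants blowing up as $\theta\to 0^+$, destroying the $\theta$-uniformity you need. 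The paper's mechanism is instead a linking estimate at radii $\tau\in(1,R_k)$ over subspaces $Z_k$ of codimension $k-1$ on which $\|u\|_{\alpha(\cdot)}\leq\delta_k\|u\|$ with $\delta_k\to 0$ (Lemma~\ref{PL.Super.delta_k}), which makes the lower bounds for $c_k$ tend to $+\infty$, followed by an interleaved choice of dimensions $k_1<\dots<k_n$ and parameters $\theta_n$ so that $0<c_{k_1}<\dots<c_{k_n}$ are distinct and all lie below the $\theta$-dependent $\textup{(PS)}$ threshold; the distinctness of the $n$ values, which you assert but do not obtain, comes exactly from this interleaving. Your identification of the correct threshold $c^\ast(\theta)\to+\infty$ from $\mu_i\leq\theta\nu_i$ and \eqref{T.ccp.nu_mu} is fine, but without the two repairs above the proof does not close.
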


\begin{example}\rm
	Some examples of $f$ fulfilling $(\mathcal{F}_0)$, $(\mathcal{F}_1)$, $(\mathcal{F}_3)$ and $(\mathcal{F}_4)$ (with $\beta=q^+$) are
	\begin{itemize}
		\item [(i)] $f(x,t)=c_1|t|^{\delta(x)-2}t+c_2|t|^{q^+-2}t$,
		\item [(ii)]$f(x,t)= c_1|t|^{\delta(x)-2}t+c_2|t|^{m(x)-2}t$,
		\item [(iii)]$f(x,t)= c_1|t|^{n(x)-2}t$,
			\end{itemize} 
	where $\delta,m\in C_+(\overline\Omega)$ satisfy $q^+\leq \delta^-$, $q^+<\delta(x_0)$ at some $x_0\in\Omega$, $m(\cdot)< p(\cdot)$, $n=\varphi \delta+(1-\varphi)q^+$ with $\varphi\in C(\overline\Omega)$, $0\leq\varphi(\cdot)\leq 1$, $\operatorname{supp}\varphi\subset B_{2\epsilon_0}(x_0)\subset\Omega$ and $\varphi(\cdot)\equiv 1$ on $B_{\epsilon_0}(x_0)$; and $c_1>0$, $c_2\geq 0$ are constants. Moreover, the functions $f$ in (i) and (ii) also satisfy $(\mathcal{F}_5)$. It is clear that the right-hand side of \eqref{e1.1'} corresponding these examples can only be $q^+$-superlinear locally.
\end{example}
		
	%=======================SECTION 2. PRELIMINARIES====================== %
	\section{Variable exponent spaces}\label{Pre}
	%------------------------------------------------------------------------------------------------------------------------------------
	In this section, we briefly review variable exponent spaces and refer the reader to \cite{CGHW, DHHR,HW22} for a systematic study on these spaces. Let $\Omega$ be a bounded domain in $\mathbb{R}^N$ with Lipschitz boundary.
	\subsection{Lebesgue spaces with variable exponents}${}$
	
\vskip5pt

  For  $m\in C_+(\overline\Omega)$ and a $\sigma$-finite, complete measure $\mu$ in $\overline{\Omega},$  define the  variable exponent Lebesgue space $L_\mu^{m(\cdot)}(\Omega)$ as
	$$
	L_\mu^{m(\cdot)}(\Omega) := \left \{ u : \Omega\to\mathbb{R}\  \hbox{is}\  \mu-\text{measurable},\ \int_\Omega |u(x)|^{m(x)} \;\diff\mu < \infty \right \},
	$$
	endowed with the Luxemburg norm
	$$
	\|u\|_{L_\mu^{m(\cdot)}(\Omega)}:=\inf\left\{\lambda >0:
	\int_\Omega
	\Big|\frac{u(x)}{\lambda}\Big|^{m(x)}\;\diff\mu\le1\right\}.
	$$
For a Lebesgue measurable and positive a.e. function $w$, denote $L^{m(\cdot)}(w,\Omega):=L_\mu^{m(\cdot)}(\Omega)$ with $\diff \mu=w(x)\diff x$. When $\mu$ is the Lebesgue measure, we write  $\diff x$, $L^{m(\cdot) }(\Omega) $  and $\|\cdot\|_{m(\cdot)}$ in place of $\diff \mu$, $L_\mu^{m(\cdot)}(\Omega)$ and $\|\cdot\|_{L_\mu^{m(\cdot)}(\Omega)}$, respectively. %Generally, the notation $\|\cdot\|_E$ stands for a norm on the normed space $E$.
 
	The following propositions are crucial for our arguments in the next sections.
	
	%---------------PROP1: HOLDER INEQUALITY-------------------------%
	\begin{proposition}[\cite{DHHR}] \label{prop.Holder}
		The space $L_{\mu}^{m(\cdot) }(\Omega )$ is a separable and uniformly convex Banach space, and its conjugate space is $L_{\mu}^{m'(\cdot) }(\Omega ),$ where  $1/m(x)+1/m'(x)=1$. For any $u\in L_{\mu}^{m(\cdot)}(\Omega)$ and $v\in L_{\mu}^{m'(\cdot)}(\Omega)$, we have
		\begin{equation*}
			\left|\int_\Omega uv\,\diff \mu\right|\leq\ 2 \|u\|_{L_\mu^{m(\cdot)}(\Omega)}\|v\|_{L_\mu^{m'(\cdot)}(\Omega)}.
		\end{equation*}
	\end{proposition}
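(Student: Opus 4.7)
The plan is to establish the four assertions in order: Hölder's inequality, the Banach space structure, separability, uniform convexity, and the duality identification.

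First, I would prove the Hölder-type inequality via Young's inequality applied pointwise in $x$. For $u\in L_\mu^{m(\cdot)}(\Omega)\setminus\{0\}$ and $v\in L_\mu^{m'(\cdot)}(\Omega)\setminus\{0\}$ set $\tilde u=u/\|u\|_{L_\mu^{m(\cdot)}(\Omega)}$ and $\tilde v=v/\|v\|_{L_\mu^{m'(\cdot)}(\Omega)}$, so that the corresponding modulars $\int_\Omega|\tilde u|^{m(x)}\diff\mu$ and $\int_\Omega|\tilde v|^{m'(x)}\diff\mu$ are bounded by $1$ (this is the standard relation between the modular and the Luxemburg norm, which I would record as a preliminary lemma). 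Applying the pointwise inequality
\begin{equation*}
|\tilde u(x)\tilde v(x)|\leq \frac{|\tilde u(x)|^{m(x)}}{m(x)}+\frac{|\tilde v(x)|^{m'(x)}}{m'(x)}
\end{equation*}
and integrating yields $\int_\Omega|\tilde u\tilde v|\diff\mu\leq 1/m^-+1/(m')^-\leq 2$, which after rescaling gives the claimed inequality with constant $2$.

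Next, completeness of $(L_\mu^{m(\cdot)}(\Omega),\|\cdot\|_{L_\mu^{m(\cdot)}(\Omega)})$ is obtained by the usual strategy: given a Cauchy sequence, extract a subsequence with $\|u_{n_{k+1}}-u_{n_k}\|\leq 2^{-k}$, show that the modular of the tail sums is controlled (using $m^+<\infty$ together with the modular--norm relation), so the subsequence converges $\mu$-a.e. to some $u$, and then verify convergence in norm via Fatou's lemma. Separability follows from the density of $\mu$-simple functions with values in $\mathbb{Q}$ supported on a countable algebra generating the $\mu$-measurable sets; the density itself is by truncation and monotone convergence, together with the modular--norm relation to convert $L^1(\diff\mu)$-smallness of $|u-u_n|^{m(x)}$ into norm-smallness.

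For uniform convexity, which is the technical heart, I would reduce to establishing a Clarkson-type inequality for the modular. Since $1<m^-\leq m^+<\infty$, the standard pointwise estimate
\begin{equation*}
\left|\frac{a+b}{2}\right|^{m(x)}+\left|\frac{a-b}{2}\right|^{m(x)}\leq \tfrac12\bigl(|a|^{m(x)}+|b|^{m(x)}\bigr)
\end{equation*}
(with an extra sharpening when $|a-b|$ is not small) can be integrated and then transferred from the modular to the norm by the relation between them; concretely, given $u,v$ with $\|u\|,\|v\|\leq 1$ and $\|u-v\|\geq \eps$, one produces $\delta=\delta(\eps)>0$ with $\|(u+v)/2\|\leq 1-\delta$. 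The main obstacle here is handling the variable exponent: the modular and the norm are not homogeneous in the same way, so one must split into the cases where $m(x)$ is close to $2$ (use the quadratic Clarkson inequality) and where $m(x)$ is far from $2$ (use the $L^m$ Clarkson inequality with uniform constants coming from $m^-$ and $m^+$), then patch.

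Finally, for duality, Hölder's inequality already shows that $v\mapsto L_v(u):=\int_\Omega uv\diff\mu$ defines a bounded injection $L_\mu^{m'(\cdot)}(\Omega)\hookrightarrow (L_\mu^{m(\cdot)}(\Omega))^*$ with $\|L_v\|\leq 2\|v\|_{L_\mu^{m'(\cdot)}(\Omega)}$. A matching lower bound is obtained by choosing the explicit test function $u(x)=|v(x)|^{m'(x)-2}v(x)/c$ with $c$ normalizing to the Luxemburg ball, whence $\|L_v\|\geq C\|v\|_{L_\mu^{m'(\cdot)}(\Omega)}$. Surjectivity is proved by the standard Radon--Nikodym argument: a continuous linear functional $\Phi$ restricted to $\mathbf{1}_E$ defines a signed $\mu$-absolutely continuous measure, producing a density $v$, which one then shows lies in $L_\mu^{m'(\cdot)}(\Omega)$ by testing against truncations of $|v|^{m'(x)-2}v$ and using the modular--norm relation once more. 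With reflexivity (coming from uniform convexity via Milman--Pettis) one can alternatively identify the bidual to conclude surjectivity; either route closes the argument.
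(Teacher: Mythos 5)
The paper gives no proof of this proposition at all: it is quoted from the monograph of Diening--Harjulehto--H\"ast\"o--R\r{u}\v{z}i\v{c}ka \cite{Diening}, so there is no internal argument to compare with. Your outline follows the standard textbook route (normalize by the Luxemburg norm and apply pointwise Young's inequality for the H\"older estimate with constant $2$; a Riesz--Fischer-type argument for completeness; simple functions for separability; a Clarkson-type estimate for uniform convexity; Radon--Nikodym plus the norm--modular relation, or reflexivity via Milman--Pettis, for the identification of the dual). Most of it is sound: the H\"older step is correct as written, since each normalized modular is at most $1$ and $1/m^-+1/(m')^-\le 2$; completeness, separability (the Borel $\sigma$-algebra of $\overline{\Omega}\subset\R^N$ is countably generated, which is exactly what your countable algebra of sets requires), and the duality argument are the usual ones and go through because $1<m^-\le m^+<\infty$.

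The one genuine defect is in the uniform convexity step. The displayed pointwise inequality $\left|\frac{a+b}{2}\right|^{m(x)}+\left|\frac{a-b}{2}\right|^{m(x)}\le \frac12\left(|a|^{m(x)}+|b|^{m(x)}\right)$ is Clarkson's first inequality and is false whenever $1<m(x)<2$ (take $a=1$, $b=0$: the left-hand side is $2^{1-m(x)}>\tfrac12$). Accordingly, the case split you propose, $m(x)$ close to $2$ versus far from $2$, is not the right dichotomy; the relevant split is $m(x)\ge 2$ versus $m(x)<2$, and in the latter regime Clarkson's second inequality carries the conjugate exponent outside the integral, so it cannot simply be integrated pointwise against a variable exponent as you suggest. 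The standard repair, and the one carried out in \cite{Diening}, avoids Clarkson altogether: one shows that $t\mapsto t^{m(x)}$ is uniformly convex uniformly in $x$ because $m(x)\in[m^-,m^+]\subset(1,\infty)$, concludes that the modular is a uniformly convex semimodular, and then invokes the general lemma that a uniformly convex semimodular satisfying the $\Delta_2$-condition (guaranteed here by $m^+<\infty$) induces a uniformly convex norm; alternatively one can use Hanner-type inequalities for $m(x)<2$. In either case the passage from the corrected pointwise convexity estimate to the statement about the Luxemburg norm requires this modular-to-norm lemma to be made explicit, precisely because, as you note, the modular and the norm do not scale in the same way. With that step repaired, the remainder of your argument (including Milman--Pettis for reflexivity and the Radon--Nikodym surjectivity argument) closes correctly.
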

	Define the modular $\rho :L_{\mu}^{m(\cdot) }(\Omega )$ $ \to \mathbb{R}$ as
	\[
	\rho (u) :=\int_{\Omega }| u(x)| ^{m(x) }\diff \mu,\quad
	\forall u\in L_{\mu}^{m(\cdot) }(\Omega ) .
	\]
	
	%---------------PROP2: ESTIMATING MODULAR BY NORM--------------------%
\begin{proposition}[\cite{DHHR}] \label{prop.nor-mod.M}
For all $u\in L_\mu^{m(\cdot) }(\Omega ),$  we have
		\begin{itemize}
			\item[(i)] $\|u\|_{L_{\mu}^{m(\cdot)}(\Omega)}<1$ $(=1,>1)$
			if and only if \  $\rho (u) <1$ $(=1,>1)$, respectively;
			
			\item[(ii)] if \  $\|u\|_{L_{\mu}^{m(\cdot)}(\Omega)}>1,$ then  $\|u\|^{m^{-}}_{L_{\mu}^{m(\cdot)}(\Omega)}\leq \rho (u) \leq \|u\|_{L_{\mu}^{m(\cdot)}(\Omega)}^{m^{+}}$;
			\item[(iii)] if \ $\|u\|_{L_{\mu}^{m(\cdot)}(\Omega)}<1,$ then $\|u\|_{L_{\mu}^{m(\cdot)}(\Omega)}^{m^{+}}\leq \rho
			(u) \leq \|u\|_{L_{\mu}^{m(\cdot)}(\Omega)}^{m^{-}}$.
		\end{itemize}
Consequently,
		$$\|u\|_{L_{\mu}^{m(\cdot)}(\Omega)}^{m^{-}}-1\leq \rho (u) \leq \|u\|_{L_{\mu}^{m(\cdot)}(\Omega)}^{m^{+}}+1,\ \forall u\in L_\mu^{m(\cdot)}(\Omega ).$$
	\end{proposition}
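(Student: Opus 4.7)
The plan is to reduce everything to the monotonicity and continuity of the one-variable auxiliary function
\[
\Phi_u(\lambda) := \rho(u/\lambda) = \int_\Omega \frac{|u(x)|^{m(x)}}{\lambda^{m(x)}} \diff\mu, \quad \lambda>0,
\]
through which the Luxemburg norm is defined. First I would show that $\Phi_u$ is continuous and strictly decreasing on the interval where it is finite. Strict monotonicity and left-continuity follow from monotone convergence, since as $\lambda$ decreases the integrand $|u/\lambda|^{m(x)}$ increases pointwise. Right-continuity follows from dominated convergence with majorant $|u/\lambda_0|^{m(x)}$ for any fixed $\lambda_0$ satisfying $\Phi_u(\lambda_0)<\infty$ and $\lambda\geq\lambda_0$; such a $\lambda_0$ exists because $u\in L^{m(\cdot)}_\mu(\Omega)$. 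Moreover $\Phi_u(\lambda)\to 0$ as $\lambda\to\infty$ by dominated convergence, so the set $\{\lambda>0:\Phi_u(\lambda)\leq 1\}$ is a nonempty half-line $[\|u\|_\ast,\infty)$ with $\|u\|_\ast:=\|u\|_{L^{m(\cdot)}_\mu(\Omega)}$, and continuity at the left endpoint forces the pivotal identity
\[
\Phi_u(\|u\|_\ast)=\int_\Omega \frac{|u(x)|^{m(x)}}{\|u\|_\ast^{m(x)}}\diff\mu=1 \qquad (\star)
\]
whenever $\|u\|_\ast>0$.

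With $(\star)$ in hand, (ii) and (iii) drop out by comparing $\|u\|_\ast^{m(x)}$ with $\|u\|_\ast^{m^\pm}$. If $\|u\|_\ast>1$, then $\|u\|_\ast^{m^-}\leq \|u\|_\ast^{m(x)}\leq \|u\|_\ast^{m^+}$ for every $x\in\overline\Omega$, and dividing $(\star)$ by the two extreme values gives
\[
\frac{\rho(u)}{\|u\|_\ast^{m^+}} \leq 1 \leq \frac{\rho(u)}{\|u\|_\ast^{m^-}},
\]
which is exactly (ii). When $\|u\|_\ast<1$, the inequalities on $\|u\|_\ast^{m(x)}$ reverse, yielding (iii). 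The concluding two-sided bound $\|u\|_\ast^{m^-}-1\leq \rho(u)\leq \|u\|_\ast^{m^+}+1$ is then automatic, the additive $1$ absorbing the boundary cases $u\equiv 0$ and $\|u\|_\ast=1$ that (ii) and (iii) do not directly cover.

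For (i), I would combine $(\star)$ with (ii)–(iii). If $\rho(u)<1$, then $1\in\{\lambda:\Phi_u(\lambda)\leq 1\}$, so $\|u\|_\ast\leq 1$, and strict inequality holds because equality $\|u\|_\ast=1$ would force $\rho(u)=\Phi_u(1)=1$ via $(\star)$. Conversely, if $\|u\|_\ast<1$ then (iii) yields $\rho(u)\leq \|u\|_\ast^{m^-}<1$. The cases $\rho(u)=1$ and $\rho(u)>1$ match $\|u\|_\ast=1$ and $\|u\|_\ast>1$ by the same argument applied at and past the threshold. The main technical obstacle is a rigorous justification of continuity of $\Phi_u$ at the left endpoint of its effective domain; here the assumption $m\in C_+(\overline\Omega)$, which supplies $1<m^-\leq m^+<\infty$, is precisely what allows a uniform dominating function on any compact subinterval of $(0,\infty)\cap\{\Phi_u<\infty\}$ and thereby guarantees that the infimum in the definition of the Luxemburg norm is attained whenever it is positive.
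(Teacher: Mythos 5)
Your proof is correct. Note that the paper itself gives no argument for this proposition: it is quoted verbatim from the cited monograph of Diening--Harjulehto--H\"ast\"o--R\r{u}\v{z}i\v{c}ka, and your argument is essentially the standard one found there, namely the unit-ball property $\rho\big(u/\|u\|_{L_\mu^{m(\cdot)}(\Omega)}\big)=1$ for $u\neq 0$ (valid precisely because $m^+<\infty$ makes $\lambda\mapsto\rho(u/\lambda)$ finite, strictly decreasing and continuous on $(0,\infty)$), followed by the pointwise comparison of $\|u\|^{m(x)}$ with $\|u\|^{m^\pm}$. The only blemishes are cosmetic: your labels ``left-'' and ``right-continuity'' are swapped relative to the direction of the monotone and dominated convergence arguments you invoke (what you need is that $\rho(u/\lambda_n)\uparrow\rho(u/\lambda)$ as $\lambda_n\downarrow\lambda$ by monotone convergence, plus continuity from below the infimum via a dominated-convergence majorant on $[\lambda/2,\lambda]$), and the case $u\equiv 0$ is already covered by (iii) trivially rather than only by the additive constant in the final estimate; neither affects the validity of the proof.
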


\subsection{A class of Musielak-Orlicz-Sobolev spaces} \label{A gene.Orlicz space}${}$
\vskip5pt

Define $\Phi: \overline{\Omega}\times \R\to\R$ as
\begin{align}\label{Phi}
	\Phi(x,t):=b(x)t^{r(x)}+c(x)t^{s(x)}
	\quad\text{for } (x,t)\in \overline{\Omega}\times [0,\infty),
\end{align}
where $r,s\in C_+(\close)$ with $r(\cdot)<s(\cdot)$, $0< b(\cdot) \in \Lp{1}$ and $0 \leq c(\cdot) \in \Lp{1}$.
The modular associated with $\Phi$ is defined as
\begin{align}\label{modular-Lp}
	\rho_{\Phi}(u):= \into \Phi (x,|u|)\,\diff x.
\end{align}
Then, the corresponding Musielak-Orlicz space $\Lp{\Phi}$ is defined  
by
\begin{align*}
	L^{\Phi}(\Omega):=\left \{u : \Omega\to\mathbb{R}\  \hbox{is Lebesgue measurable},\,\rho_{\Phi}(u) < +\infty \right\},
\end{align*}
endowed with the norm
\begin{align*}
	\|u\|_{\Phi}:= \inf \left \{ \tau >0 :\, \rho_{\Phi}\left(\frac{u}{\tau}\right) \leq 1  \right \}.
\end{align*}
In view of \cite[Proposition 2.6]{CGHW},  $\Lp{\Phi}$ is a Banach space. The following proposition gives the relation between the modular $\rho_{\Phi}$ and its norm $\|\cdot\|_{\Phi}$, see \cite[Proof of Proposition 2.13]{CGHW} for a detail proof.

\begin{proposition}\label{prop.nor-mod.D}
	Let $u, u_n\in\Lp{\Phi}$ ($n\in\N$) and let $\rho_{\Phi}$ be defined as in \eqref{modular-Lp}.
	\begin{enumerate}
		\item[\textnormal{(i)}]
		If $u\neq 0$, then $\|u\|_{\Phi}=\lambda$ if and only if $ \rho_{\Phi}(\frac{u}{\lambda})=1$.
		\item[\textnormal{(ii)}]
		$\|u\|_{\Phi}<1$ (resp.\,$>1$, $=1$) if and only if $ \rho_{\Phi}(u)<1$ (resp.\,$>1$, $=1$).
		\item[\textnormal{(iii)}]
		If $\|u\|_{\Phi}<1$, then $\|u\|_{\Phi}^{s^+}\leqslant \rho_{\Phi}(u)\leqslant\|u\|_{\Phi}^{r^-}$.
		\item[\textnormal{(iv)}]
		If $\|u\|_{\Phi}>1$, then $\|u\|_{\Phi}^{r^-}\leqslant \rho_{\Phi}(u)\leqslant\|u\|_{\Phi}^{s^+}$.
		\item[\textnormal{(v)}] $\|u_n\|_{\Phi}\to 0$ as $n\to \infty$ if and only if $\rho_{\Phi}(u_n)\to 0$ as $n\to \infty$.
	\end{enumerate}
\end{proposition}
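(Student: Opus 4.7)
The plan is to reduce all five assertions to two structural facts: for each fixed $x\in\overline{\Omega}$, the map $t\mapsto \Phi(x,t)$ is continuous and strictly increasing from $[0,\infty)$ onto $[0,\infty)$ (since $b(x)>0$), and inspection of the two monomials in $\Phi$ yields the two-sided scaling bounds
\begin{equation*}
\lambda^{r^-}\Phi(x,t)\leq \Phi(x,\lambda t)\leq \lambda^{s^+}\Phi(x,t)\quad\text{for }\lambda\geq 1,
\end{equation*}
together with the reversed inequalities $\lambda^{s^+}\Phi(x,t)\leq \Phi(x,\lambda t)\leq \lambda^{r^-}\Phi(x,t)$ for $0<\lambda\leq 1$. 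These follow because $\Phi(x,\lambda t)=\lambda^{r(x)}b(x)t^{r(x)}+\lambda^{s(x)}c(x)t^{s(x)}$, and for $\lambda\geq 1$ both powers $\lambda^{r(x)},\lambda^{s(x)}$ lie in $[\lambda^{r^-},\lambda^{s^+}]$, with the ordering reversed when $\lambda<1$ and $r^-<s^-$.

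First I would establish (i). For $u\in L^{\Phi}(\Omega)\setminus\{0\}$, I would show that $\tau\mapsto \rho_{\Phi}(u/\tau)$ is strictly decreasing and continuous on $(0,\infty)$, tends to $0$ as $\tau\to\infty$ (by dominated convergence, dominating $\Phi(x,|u(x)|/\tau)$ by $\Phi(x,|u(x)|)$ once $\tau\geq 1$), and tends to $+\infty$ as $\tau\to 0^+$ (by monotone convergence, using that $b>0$ and $u\neq 0$ on a set of positive measure). Hence the equation $\rho_{\Phi}(u/\tau)=1$ has a unique solution $\tau^\ast$, and the definition of the Luxemburg norm identifies $\tau^\ast$ with $\|u\|_{\Phi}$. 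The continuity on any compact subinterval $[a,b]\subset(0,\infty)$ is a dominated convergence argument with dominant $\Phi(x,|u(x)|/a)\in L^1(\Omega)$, which is integrable because $u/a\in L^{\Phi}(\Omega)$.

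Item (ii) is then an immediate corollary of (i) and the strict monotonicity just established. For (iii) and (iv) I would set $\lambda=\|u\|_{\Phi}$, apply (i) to obtain $\rho_{\Phi}(u/\lambda)=1$, and feed the pointwise scaling bounds to $\Phi(x,\lambda\cdot|u(x)|/\lambda)$ before integrating over $\Omega$: the case $\lambda>1$ yields $\|u\|_{\Phi}^{r^-}\leq \rho_{\Phi}(u)\leq \|u\|_{\Phi}^{s^+}$, while $\lambda<1$ yields $\|u\|_{\Phi}^{s^+}\leq \rho_{\Phi}(u)\leq \|u\|_{\Phi}^{r^-}$. Finally, (v) follows directly from (ii)--(iii): if $\|u_n\|_{\Phi}\to 0$, then eventually $\|u_n\|_{\Phi}<1$ and $\rho_{\Phi}(u_n)\leq \|u_n\|_{\Phi}^{r^-}\to 0$; conversely, if $\rho_{\Phi}(u_n)\to 0$, then $\|u_n\|_{\Phi}<1$ for large $n$ by (ii), and the inequality $\|u_n\|_{\Phi}^{s^+}\leq \rho_{\Phi}(u_n)$ from (iii) gives $\|u_n\|_{\Phi}\to 0$.

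The only step that is not purely algebraic is the continuity and limit behaviour of $\tau\mapsto \rho_{\Phi}(u/\tau)$ in (i); once that is handled by the dominated/monotone convergence arguments indicated above, the remaining assertions are straightforward bookkeeping with the two-sided power bounds on $\Phi$, so I do not anticipate a serious obstacle.
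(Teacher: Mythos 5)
Your proof is correct and follows essentially the same route as the paper, which does not argue the proposition itself but defers to \cite[Proposition 2.13]{crespo2022new}: the standard proof there rests on exactly the two ingredients you use, namely the unit-ball property of the Luxemburg norm obtained from the continuity and strict monotonicity of $\tau\mapsto\rho_{\Phi}(u/\tau)$ (with the limits at $0^+$ and $\infty$), and the two-sided power scaling bounds $\lambda^{r^-}\Phi(x,t)\leq\Phi(x,\lambda t)\leq\lambda^{s^+}\Phi(x,t)$ for $\lambda\geq 1$ (reversed for $\lambda\leq 1$). The only, harmless, slip is the parenthetical condition ``$r^-<s^-$'' for the reversed inequalities when $\lambda<1$: all that is needed is $r(x)\leq s(x)$ pointwise, which the standing assumption $r(\cdot)<s(\cdot)$ already provides, and one should note that the degenerate case $u=0$ in (iii) and (v) is trivial since both norm and modular vanish.
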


Let $\mathcal{H}$ and $\mathcal{B}$ be as in \eqref{def_H} and \eqref{def_B}, respectively and in the following, we always assume that ($\mathcal{H}_A$) and ($\mathcal{H}_B$) are fulfilled. We define the Musielak-Orlicz-Sobolev space $\Wp{\mathcal{H}}$ as
\begin{align*}
	\Wp{\mathcal{H}}
	=\left \{u \in L^{\mathcal{H}}(\Omega) \,:\,|\nabla u| \in L^{\mathcal{H}}(\Omega) \right \}
\end{align*}
equipped with the norm
\begin{align*}
	\|u\|_{1,\mathcal{H}} = \|u\|_{\mathcal{H}}+\|\nabla u\|_{\mathcal{H}},
\end{align*}
where $\|\nabla u\|_{\mathcal{H}}=\| \, |\nabla u| \,\|_{\mathcal{H}}$. We denote by $\Wpzero{\mathcal{H}}$ the completion of $C^\infty_c(\Omega)$ in $\Wp{\mathcal{H}}$. In view of \cite[Proposition 2.12]{CGHW}, $\Lp{\mathcal{H}}$, $\Wp{\mathcal{H}}$ and $\Wpzero{\mathcal{H}}$ are reflexive Banach spaces. When $a(\cdot)\equiv 0$, we write $W^{1,p(\cdot)}(\Omega)$, $W_0^{1,p(\cdot)}(\Omega)$ and $\|\cdot\|_{1,p(\cdot)}$ in place of $\Wp{\mathcal{H}}$, $\Wpzero{\mathcal{H}}$ and $\|\cdot\|_{1,\mathcal{H}}$, respectively. The following embedding results can be found in \cite[Proposition 2.16]{CGHW}, recalling 
$$m^*(\cdot):=\frac{Nm(\cdot)}{N-m(\cdot)}$$
for $m(\cdot)<N$.

\begin{proposition}\label{prop_embs}
	The following assertions hold.
	\begin{enumerate}
				\item[\textnormal{(i)}] $\Wp{\mathcal{H}} \hookrightarrow \Lp{r(\cdot)}$ and $\Wpzero{\mathcal{H}} \hookrightarrow \Lp{r(\cdot)}$ for $r \in C(\close)$ with $ 1 \leq r(x) \leq p^*(x)$ for all $x\in \close$;
		\item[\textnormal{(ii)}]
		$\Wp{\mathcal{H}} \hookrightarrow \hookrightarrow\Lp{r(\cdot)}$ and $\Wpzero{\mathcal{H}} \hookrightarrow \hookrightarrow\Lp{r(\cdot)}$  for $r \in C(\close) $ with $ 1 \leq r(x) < p^*(x)$ for all $x\in \close$.
	\end{enumerate}
\end{proposition}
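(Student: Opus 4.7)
The plan is to reduce all four assertions to the classical variable-exponent Sobolev embeddings for $\Wp{p(\cdot)}$ and $\Wpzero{p(\cdot)}$, by exploiting the pointwise bound $\mathcal{H}(x,t) \geq t^{p(x)}$ which holds on $\close \times [0,\infty)$ because $a(\cdot) \geq 0$ under $(\mathcal{H}_A)$.

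First I would show $\Wp{\mathcal{H}} \hookrightarrow \Wp{p(\cdot)}$ with operator norm at most $1$. For nonzero $u \in \Lp{\mathcal{H}}$, set $\lambda := \|u\|_{\mathcal{H}}$. Proposition~\ref{prop.nor-mod.D}(i) (applied with $\Phi = \mathcal{H}$, which fits the template of \eqref{Phi} with $b\equiv 1$, $r=p$, $c=a$, $s=q$) gives $\rho_{\mathcal{H}}(u/\lambda) = 1$, and the pointwise inequality above then yields
\[
\into \left|\frac{u(x)}{\lambda}\right|^{p(x)} \diff x \;\leq\; \into \mathcal{H}\!\left(x,\left|\frac{u(x)}{\lambda}\right|\right) \diff x \;=\; 1.
\]
Invoking Proposition~\ref{prop.nor-mod.M}(i) in $L^{p(\cdot)}(\Omega)$, this forces $\|u/\lambda\|_{p(\cdot)} \leq 1$, i.e. $\|u\|_{p(\cdot)} \leq \|u\|_{\mathcal{H}}$; the same reasoning applied to $|\nabla u|$ gives $\|\nabla u\|_{p(\cdot)} \leq \|\nabla u\|_{\mathcal{H}}$, hence $\|u\|_{1,p(\cdot)} \leq \|u\|_{1,\mathcal{H}}$. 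For the zero-trace version, any $u \in \Wpzero{\mathcal{H}}$ is a $\|\cdot\|_{1,\mathcal{H}}$-limit of $C_c^\infty(\Omega)$-functions; the norm inequality just obtained makes this approximating sequence Cauchy in $\|\cdot\|_{1,p(\cdot)}$ as well, so by completeness of $\Wpzero{p(\cdot)}$ the limit lies in $\Wpzero{p(\cdot)}$ with the same norm control, giving $\Wpzero{\mathcal{H}} \hookrightarrow \Wpzero{p(\cdot)}$ continuously.

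Next I would invoke the classical variable-exponent Sobolev embedding theorem: since $p \in C^{0,1}(\close)$ (in particular, log-Hölder continuous) and $1 < p(\cdot) < N$ by $(\mathcal{H}_A)$, one has
\[
\Wp{p(\cdot)} \hookrightarrow \Lp{r(\cdot)} \qquad\text{for every } r \in C(\close)\text{ with } 1 \leq r(x) \leq p^*(x) \text{ on } \close,
\]
continuously, and compactly whenever $r(x) < p^*(x)$ throughout $\close$; the analogous statements hold for $\Wpzero{p(\cdot)}$. This is standard material in the theory of variable-exponent Sobolev spaces (see, e.g., \cite{Diening}). Composing with the continuous embedding of the previous paragraph yields the continuous embeddings in (i); composing a continuous embedding with a compact one yields a compact embedding, which delivers (ii).

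I do not anticipate any substantive obstacle in this argument: everything reduces to the pointwise domination $\mathcal{H}(x,t)\geq t^{p(x)}$, the modular/norm comparisons encoded in Propositions~\ref{prop.nor-mod.M} and \ref{prop.nor-mod.D}, and the classical variable-exponent Sobolev embedding theorem. The only mildly delicate point is the transfer of the embedding to $\Wpzero{\mathcal{H}}$, which is handled by the density argument indicated above; note that the stronger assumption $q(x)/p(x) < 1 + 1/N$ from $(\mathcal{H}_A)$ is \emph{not} needed for this proposition (it is only needed for density of smooth functions and Poincaré-type inequalities on $\Wpzero{\mathcal{H}}$, not for the sub-$p^*$ embeddings themselves).
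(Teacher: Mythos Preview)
Your argument is correct. The paper does not actually prove this proposition; it simply cites \cite[Proposition 2.16]{crespo2022new} immediately before stating it. Your route---factoring through the continuous embedding $\Wp{\mathcal{H}}\hookrightarrow \Wp{p(\cdot)}$ obtained from the pointwise bound $\mathcal{H}(x,t)\geq t^{p(x)}$, and then invoking the classical variable-exponent Sobolev embeddings---is precisely the standard way this result is established (and is, in essence, how the cited reference proceeds), so there is no meaningful divergence to discuss.
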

	
The following Poincar\'e-type inequality was proved in \cite{CGHW}.
\begin{proposition}\label{prop_Poincare}
	The following Poincar\'e-type inequality holds:
			\begin{align*}
			\|u\|_{\mathcal{H}} \leq C\|\nabla u\|_{\mathcal{H}}\quad\text{for all 
			} u \in \Wpzero{\mathcal{H}}.
		\end{align*}
	Consequently, on $\Wpzero{\mathcal{H}}$ we have an equivalent norm 
	\begin{equation}\label{norm}
		\|\cdot\|:=\|\nabla \cdot\|_{\mathcal{H}}.
	\end{equation}
\end{proposition}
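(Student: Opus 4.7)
The plan is to prove the Poincaré-type inequality by handling the two phases of $\mathcal{H}$ separately: bound the $p(\cdot)$-part using the classical variable-exponent Poincaré inequality in $W_0^{1,p(\cdot)}(\Omega)$, and bound the $a(\cdot)$-weighted $q(\cdot)$-part by a Sobolev-type estimate enabled by the balance condition in $(\mathcal{H}_A)$. Once both terms of the modular $\rho_{\mathcal{H}}(u)$ are controlled by $\|\nabla u\|_{\mathcal{H}}$, Proposition~\ref{prop.nor-mod.D} converts this into a norm bound $\|u\|_{\mathcal{H}}\leq C\|\nabla u\|_{\mathcal{H}}$.

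First I would use the standard Poincaré inequality for $W_0^{1,p(\cdot)}(\Omega)$, which is valid since $p\in C^{0,1}(\overline{\Omega})$ is log-Hölder continuous: $\|u\|_{p(\cdot)}\leq c_1\|\nabla u\|_{p(\cdot)}$ for all $u\in W_0^{1,p(\cdot)}(\Omega)$. The pointwise inequality $\mathcal{H}(x,t)\geq t^{p(x)}$ gives $\|v\|_{p(\cdot)}\leq c_2\|v\|_{\mathcal{H}}$ by comparing modulars, hence $\|u\|_{p(\cdot)}\leq c_3\|\nabla u\|_{\mathcal{H}}$ for $u\in W_0^{1,\mathcal{H}}(\Omega)$. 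Next, from $\tfrac{q(x)}{p(x)}<1+\tfrac{1}{N}$ and $q(x)>1$ one checks that $q(x)<p^\ast(x)$ on $\overline{\Omega}$, so the classical variable-exponent Sobolev embedding yields $W_0^{1,p(\cdot)}(\Omega)\hookrightarrow L^{q(\cdot)}(\Omega)$, and combining with the first step produces $\|u\|_{q(\cdot)}\leq c_4\|\nabla u\|_{\mathcal{H}}$. Since $a\in L^\infty(\Omega)$,
\begin{equation*}
\int_\Omega a(x)|u|^{q(x)}\diff x\leq \|a\|_\infty\int_\Omega |u|^{q(x)}\diff x\leq \|a\|_\infty\bigl(\|u\|_{q(\cdot)}^{q^-}+\|u\|_{q(\cdot)}^{q^+}\bigr)
\end{equation*}
by Proposition~\ref{prop.nor-mod.M}. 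An analogous estimate with $p$ in place of $q$ controls $\int_\Omega|u|^{p(x)}\diff x$. Summing, $\rho_{\mathcal{H}}(u)$ is dominated by a polynomial in $\|\nabla u\|_{\mathcal{H}}$; after normalizing $u$ so that $\|\nabla u\|_{\mathcal{H}}=1$ (or by splitting into the cases $\|\nabla u\|_{\mathcal{H}}\leq 1$ and $>1$), Proposition~\ref{prop.nor-mod.D} then converts the modular bound into $\|u\|_{\mathcal{H}}\leq C\|\nabla u\|_{\mathcal{H}}$. The equivalence of norms asserted in \eqref{norm} is then immediate: $\|\nabla u\|_{\mathcal{H}}\leq \|u\|_{1,\mathcal{H}}\leq (C+1)\|\nabla u\|_{\mathcal{H}}$.

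The main obstacle I anticipate is purely technical: Musielak–Orlicz norms and their modulars are not homogeneous, so the transition from a modular bound back to a norm bound requires carefully splitting into the regimes $\|\nabla u\|_{\mathcal{H}}\lessgtr 1$ and tracking constants depending on $p^{\pm}$, $q^{\pm}$. Everything else is a clean combination of the classical variable-exponent Poincaré inequality, the pointwise modular inequality $\mathcal{H}(x,t)\geq t^{p(x)}$, and the Sobolev embedding $W_0^{1,p(\cdot)}(\Omega)\hookrightarrow L^{q(\cdot)}(\Omega)$ guaranteed by $(\mathcal{H}_A)$.
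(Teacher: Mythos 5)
Your argument is correct, but it is worth noting that the paper itself does not prove Proposition~\ref{prop_Poincare}: it simply quotes the inequality from \cite{crespo2022new}, where it is established within the general Musielak--Orlicz--Sobolev framework. What you propose is therefore a genuinely different, self-contained route that exploits the specific structure of $\mathcal{H}$ under $(\mathcal{H}_A)$: since $q(x)/p(x)<1+1/N$ and $p(x)>1$ indeed force $q(x)<p^\ast(x)$ uniformly on $\overline{\Omega}$, the weighted phase can be estimated after discarding the bounded weight $a$ via the classical embedding $W_0^{1,p(\cdot)}(\Omega)\hookrightarrow L^{q(\cdot)}(\Omega)$, while the unweighted phase is handled by the classical variable-exponent Poincar\'e inequality, and both are dominated by $\|\nabla u\|_{\mathcal{H}}$ because $t^{p(x)}\leq\mathcal{H}(x,t)$. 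Your passage from the modular bound back to the norm is also sound: the Luxemburg norm is positively homogeneous, so normalizing $\|\nabla u\|_{\mathcal{H}}=1$ gives $\rho_{\mathcal{H}}(u)\leq C_0$ and then $\|u\|_{\mathcal{H}}\leq\max\{1,C_0^{1/p^-}\}$ by Proposition~\ref{prop.nor-mod.D}. One step you should make explicit is the inclusion $\Wpzero{\mathcal{H}}\subset W_0^{1,p(\cdot)}(\Omega)$, which you use implicitly when applying the $W_0^{1,p(\cdot)}$-Poincar\'e and Sobolev inequalities to $u\in\Wpzero{\mathcal{H}}$; it follows immediately from the definition of $\Wpzero{\mathcal{H}}$ as the completion of $C_c^\infty(\Omega)$ together with your modular comparison $\|v\|_{p(\cdot)}\leq\|v\|_{\mathcal{H}}$ applied to both $u$ and $\nabla u$. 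The trade-off between the two approaches is clear: the cited proof works for general Musielak--Orlicz integrands and does not need to throw away the weight, whereas your argument is more elementary and entirely sufficient under the standing assumption $(\mathcal{H}_A)$ with $a\in L^\infty(\Omega)$.
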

The next proposition is \cite[Proposition 3.7]{HW22}, and it plays a key role in obtaining our main results.
\begin{proposition}\label{prop_S-C-E}
	Define 
	\begin{align*}
		\Psi(x,t):=|t|^{r(x)}+a(x)^{\frac{s(x)}{q(x)}}|t|^{s(x)}
		\quad\text{for } (x,t)\in \overline{\Omega}\times \R,
	\end{align*}
	where $r,s\in C_+(\close)$ satisfy $r(x)\leq p^*(x)$ and $s(x)\leq q^*(x)$ for all $x\in\overline{\Omega}$. Then, we have the continuous embedding
	\begin{align}\label{Prop-S-E}
		W^{1,\mathcal{H}}(\Omega)
		\hookrightarrow  L^{\Psi}(\Omega).
	\end{align}
	Furthermore, if $r(x)<p^*(x)$ and $s(x)< q^*(x)$ for all $x\in\overline{\Omega}$, then the embedding in \eqref{Prop-S-E} is compact. In particular, it holds
	\begin{align}\label{Prop-S-C}
		W^{1,\mathcal{H}}(\Omega)
		\hookrightarrow \hookrightarrow L^{\mathcal{H}}(\Omega).
	\end{align}
\end{proposition}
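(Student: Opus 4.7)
By the norm--modular correspondence of Proposition~\ref{prop.nor-mod.D} applied to $\Psi$ (which fits the framework~\eqref{Phi} with $b\equiv 1$ and $c=a^{s(\cdot)/q(\cdot)}\in L^\infty(\Omega)$), the continuous embedding~\eqref{Prop-S-E} reduces to producing a constant $C>0$ with $\rho_\Psi(u)\leq C$ whenever $\|u\|_{1,\mathcal H}\leq 1$, where
\[
\rho_\Psi(u)=\int_\Omega|u|^{r(x)}\diff x+\int_\Omega a(x)^{s(x)/q(x)}|u|^{s(x)}\diff x.
\]
The unweighted piece is handled at once by the chain $W^{1,\mathcal H}(\Omega)\hookrightarrow W^{1,p(\cdot)}(\Omega)\hookrightarrow L^{p^\ast(\cdot)}(\Omega)\hookrightarrow L^{r(\cdot)}(\Omega)$, where the middle link is the classical critical variable exponent Sobolev embedding and the last uses $r\leq p^\ast$ (cf.\ Proposition~\ref{prop_embs}(i)).

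The main work lies in the weighted piece. I would first reduce to smooth $u$ by invoking density of $C^\infty(\overline\Omega)$ in $W^{1,\mathcal H}(\Omega)$, which holds precisely because $(\mathcal H_A)$ supplies both $a\in C^{0,1}(\overline\Omega)$ and $q/p<1+1/N$ (the classical density condition for double phase spaces). For smooth $u$, I would use the identity
\[
a(x)^{s(x)/q(x)}|u|^{s(x)}=\bigl(a(x)^{1/q(x)}|u|\bigr)^{s(x)}=\bigl(a(x)|u|^{q(x)}\bigr)^{s(x)/q(x)}
\]
and decompose $\Omega=\{a<\epsilon\}\cup\{a\geq\epsilon\}$. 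On the nondegenerate set, $x\mapsto a(x)^{1/q(x)}$ is Lipschitz, so $w:=a^{1/q(\cdot)}u$ satisfies $|\nabla w|\leq a(x)^{1/q(x)}|\nabla u|+C_\epsilon|u|$; the critical $q(\cdot)$-Sobolev embedding $W^{1,q(\cdot)}\hookrightarrow L^{q^\ast(\cdot)}$ combined with $s\leq q^\ast$ then bounds $\int_{\{a\geq\epsilon\}}|w|^{s(x)}\diff x$ by $\|u\|_{1,\mathcal H}$ up to an $\epsilon$-dependent constant. On the degenerate set, when $s\leq q$ the elementary inequality $t^{s/q}\leq t+1$ reduces the integrand to the $\mathcal H$-modular, while when $s>q$ the pointwise bound $a^{s/q-1}\leq\epsilon^{s/q-1}$ combined with Young/Hölder absorbs the remaining $a|u|^{s(x)}$ into the $\mathcal H$-modular and the already controlled $L^{r(\cdot)}$-piece; a suitable choice of $\epsilon$ uniform over the unit ball eliminates the $\epsilon$-dependence and yields the modular bound. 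Density then extends~\eqref{Prop-S-E} to arbitrary $u\in W^{1,\mathcal H}(\Omega)$.

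For the compact embedding, assume the strict inequalities $r<p^\ast$ and $s<q^\ast$, and take $u_n\rightharpoonup u$ in $W^{1,\mathcal H}(\Omega)$. Proposition~\ref{prop_embs}(ii) gives $u_n\to u$ in $L^{r(\cdot)}(\Omega)$ and, along a subsequence, a.e.\ in $\Omega$. Pick $\tilde r,\tilde s\in C_+(\overline\Omega)$ with $r<\tilde r\leq p^\ast$ and $s<\tilde s\leq q^\ast$ and apply the continuous embedding just established to $\tilde\Psi(x,t):=|t|^{\tilde r(x)}+a(x)^{\tilde s(x)/q(x)}|t|^{\tilde s(x)}$ to obtain a uniform bound on $\rho_{\tilde\Psi}(u_n)$; by a de la Vallée Poussin argument this higher integrability makes $\{\Psi(\cdot,u_n-u)\}$ equi-integrable, so Vitali's theorem combined with the a.e.\ convergence gives $\rho_\Psi(u_n-u)\to 0$, i.e.\ $u_n\to u$ in $L^\Psi(\Omega)$. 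The particular case $\Psi=\mathcal H$ is immediate since $p<p^\ast$ and $q<q^\ast$. \emph{Main obstacle.} The delicate point is the weighted modular estimate of the middle paragraph: because $a$ may vanish, $a^{1/q(\cdot)}$ is globally neither smooth nor bounded below, and the critical choice $s\equiv q^\ast$ forces the sharp Sobolev exponent; the Lipschitz regularity of $a$ (controlling $\nabla(a^{1/q(\cdot)})$ on $\{a\geq\epsilon\}$) and the density condition $q/p<1+1/N$ (justifying the smooth computation of $\nabla w$) are exactly what make the $\epsilon\to 0^+$ balance produce a universal constant without divergence.
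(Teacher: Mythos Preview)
The paper does not supply its own proof of this proposition: it is imported directly from \cite[Proposition~3.7]{HW2022} and used as a black box throughout. So there is no ``paper's proof'' to compare against; your proposal stands or falls on its own.

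That said, your argument for the continuous embedding has a genuine gap in the weighted piece. On the degenerate set $\{a<\epsilon\}$ when $s(x)>q(x)$, you bound $a^{s/q}|u|^{s}\le \epsilon^{\,s/q-1}\,a|u|^{s}$ and assert that $a|u|^{s}$ can be absorbed via Young/H\"older into the $\mathcal H$-modular and the already-controlled $L^{r(\cdot)}$-piece. This does not work when $s(x)>p^{\ast}(x)$---a situation explicitly allowed by the hypotheses, since only $s\le q^{\ast}$ is assumed and $q^{\ast}>p^{\ast}$. The $\mathcal H$-modular controls only $\int a|u|^{q}$ and $\int|u|^{p}$, and the $L^{r(\cdot)}$-piece controls $\int|u|^{r}$ with $r\le p^{\ast}$; no H\"older/Young combination of these quantities bounds $\int a|u|^{s}$ (or $\int|u|^{s}$) for $s>p^{\ast}$ without already invoking the weighted critical embedding you are trying to prove. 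On the complementary set $\{a\ge\epsilon\}$, your Lipschitz estimate for $a^{1/q}$ yields $|\nabla(a^{1/q})|\le C\,a^{1/q-1}\le C\,\epsilon^{1/q-1}$, so the constant $C_{\epsilon}$ blows up like $\epsilon^{1/q-1}\to\infty$ as $\epsilon\to 0^{+}$. There is thus no compensating smallness on the degenerate set, and the advertised ``suitable choice of $\epsilon$ uniform over the unit ball'' cannot be made. The difficulty is intrinsic: the vanishing of $a$ makes $a^{1/q(\cdot)}$ only H\"older of order $1/q^{+}<1$ near its zero set, so the product-rule route through $w=a^{1/q}u\in W^{1,q(\cdot)}$ genuinely fails at the critical exponent $s=q^{\ast}$ by a splitting argument of this type. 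The proof in \cite{HW2022} must exploit the structural condition $q/p<1+1/N$ in a sharper way than mere density of smooth functions.

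Your compactness argument (higher integrability via $\tilde\Psi$ plus Vitali) is sound \emph{provided} the continuous embedding is available, so the entire proposal hinges on closing the gap above.
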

In the next sections, we will apply Proposition~\ref{prop_S-C-E} with $\Psi=\mathcal{B}$, where  $\mathcal{B}$ is given by \eqref{def_B}. This application is valid since $c_1,c_2\in L^\infty(\Omega)$. As demonstrated in \cite[Proposition 3.5]{HW22}, for the case of constant exponents, $\mathcal{G}^*(x,t):=|t|^{p^*}+a(x)^{\frac{q^*}{q}}|t|^{q^*}$ is the optimal one among $\mathcal{B}_{r,s,\alpha}$ of the form
\begin{align*}%\label{B_alpha}
	\mathcal{B}_{r,s,\alpha}(x,t):=|t|^{r}+a(x)^{\alpha} |t|^{s}
	\quad \text{for }(x,t)\in \overline{\Omega}\times \R,
\end{align*}
where $r,s\in (1,\infty)$ and $\alpha\in (0,\infty)$, such that the following continuous embedding holds
\begin{align}\label{embedding-critical-domain-O}
	W^{1,\mathcal{H}}(\Omega)\hookrightarrow L^{\mathcal{B}_{r,s,\alpha}}(\Omega),
\end{align}
namely, if \eqref{embedding-critical-domain-O} holds for any data $(p,q,a,\Omega)$ satisfying the assumption $(\mathcal{H}_A)$, then there must be $r\leq p^*$, $s\leq q^*$ and $\alpha\geq \frac{q^*}{q}$.

Finally, in the next sections we frequently use Young's inequality of the form
\begin{equation}\label{young}
	ab\leq \frac{1}{m(x)}\eps a^{m(x)}+\frac{m(x)-1}{m(x)}\eps^{-\frac{1}{m(x)-1}}b^{\frac{m(x)}{m(x)-1}}\leq \eps a^{m(x)}+\left(1+\eps^{-\frac{1}{m^--1}}\right)b^{\frac{m(x)}{m(x)-1}} 
\end{equation}
 for all $a,b\geq 0$,  $\eps>0$, $x\in \overline{\Omega}$,  and $m\in C_+(\close)$.
%======================SECTION 3. THE CCP======================== %
\section{Proof of The Concentration-Compactness Principle}\label{CCP}
In this section we will prove Theorem~\ref{Theo.ccp} by modifying the idea used in \cite {BS10} that extended the concentration-compactness principle by P.L. Lions \cite{Lions85} to the variable exponent case, namely, a concentration-compactness principle for $W_0^{1,p(\cdot)}(\Omega)$. Unlike this case, dealing with the challenges posed by the double phase operator requires significantly more intricate arguments.

Before giving a proof of Theorem~\ref{Theo.ccp}, we recall some auxiliary results obtained in \cite{BS10}.

%====================AUXILIARY LEMMAS========================%
\begin{lemma}[\cite{BS10}] \label{L.convergence}
Let $\nu,\{\nu_n\}_{n\in\mathbb{N}}$ be nonnegative and finite Radon measures on $\overline{\Omega}$ such that $\nu_n\overset{\ast }{\rightharpoonup } \nu$ in $\mathcal{M}(\overline{\Omega})$. Then, for any $m\in C_{+}(\overline{\Omega })$,
$$
\|\phi\|_{L^{m(\cdot)}_{\nu_n}(\overline{\Omega})} \to
\|\phi\|_{L^{m(\cdot)}_{\nu}(\overline{\Omega})}, \quad \forall \phi\in C(\overline{\Omega}).$$	
\end{lemma}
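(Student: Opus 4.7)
The plan is to reduce Luxemburg-norm convergence to modular convergence and then exploit the weak-$\ast$ convergence of the measures against the continuous integrands that naturally appear. The case $\phi\equiv 0$ being trivial, I would fix $\phi\in C(\overline{\Omega})\setminus\{0\}$ and define, for each $\lambda>0$,
$$F_n(\lambda):=\int_{\overline{\Omega}}\left|\frac{\phi(x)}{\lambda}\right|^{m(x)}\diff\nu_n,\qquad F(\lambda):=\int_{\overline{\Omega}}\left|\frac{\phi(x)}{\lambda}\right|^{m(x)}\diff\nu.$$
Since $\phi,m\in C(\overline{\Omega})$ and $\overline{\Omega}$ is compact, the integrand $x\mapsto |\phi(x)/\lambda|^{m(x)}$ lies in $C(\overline{\Omega})$ for every fixed $\lambda>0$. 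The weak-$\ast$ convergence $\nu_n\overset{\ast}{\rightharpoonup}\nu$ therefore yields $F_n(\lambda)\to F(\lambda)$ for every $\lambda>0$.

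Next I would record the structural properties of $F$. Dominated convergence (using the uniform bound on $|\phi/\lambda|^{m(x)}$ over compact $\lambda$-intervals and the finiteness of $\nu$) makes $F$ continuous on $(0,\infty)$, and it is strictly decreasing there whenever $\nu(\{\phi\neq 0\})>0$, because $\lambda\mapsto|\phi(x)/\lambda|^{m(x)}$ is strictly decreasing at every point where $\phi(x)\neq 0$. Writing $\lambda_\ast:=\|\phi\|_{L^{m(\cdot)}_\nu(\overline{\Omega})}$ and $\lambda_n:=\|\phi\|_{L^{m(\cdot)}_{\nu_n}(\overline{\Omega})}$, Proposition~\ref{prop.nor-mod.M}(i) applied to $\phi/\lambda_\ast$ gives, in the non-degenerate case $\lambda_\ast>0$, the identification $F(\lambda_\ast)=1$ together with $F(\lambda)>1$ for $\lambda<\lambda_\ast$ and $F(\lambda)<1$ for $\lambda>\lambda_\ast$. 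For such $\lambda_\ast$, fixing any $\epsilon\in(0,\lambda_\ast)$ and combining the pointwise convergence $F_n\to F$ with $F(\lambda_\ast-\epsilon)>1>F(\lambda_\ast+\epsilon)$ gives $F_n(\lambda_\ast-\epsilon)>1$ and $F_n(\lambda_\ast+\epsilon)\leq 1$ for all sufficiently large $n$, which by the definition of the Luxemburg norm forces $\lambda_n\in[\lambda_\ast-\epsilon,\lambda_\ast+\epsilon]$ eventually; letting $\epsilon\downarrow 0$ produces $\lambda_n\to\lambda_\ast$. In the degenerate case $\lambda_\ast=0$ one has $\phi=0$ $\nu$-a.e., so $F(\lambda)=0$ for every $\lambda>0$ and the upper-bound portion of the sandwich alone forces $\lambda_n\to 0$.

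The delicate point I expect is ensuring the \emph{strict} inequality $F(\lambda_\ast-\epsilon)>1$ in the non-degenerate step; this reduces to observing that $\lambda_\ast>0$ automatically implies $\nu(\{\phi\neq 0\})>0$ (otherwise $F$ would vanish identically and $\lambda_\ast$ would be $0$), which in turn furnishes the strict monotonicity of $F$ at $\lambda_\ast$. Once this is in place, the remainder of the argument is a purely qualitative monotonicity-plus-pointwise-convergence sandwich and requires no further quantitative input concerning $m(\cdot)$ or the measures.
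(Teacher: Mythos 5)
Your argument is correct, and it is essentially the argument behind the result: the paper itself gives no proof, quoting the lemma from \cite{Bonder}, where one likewise passes the weak-$\ast$ convergence through the continuous integrand to get $F_n(\lambda)\to F(\lambda)$ for each fixed $\lambda>0$ and then transfers this to the Luxemburg norms via the norm--modular relations of Proposition~\ref{prop.nor-mod.M}. The ``delicate point'' you flag is in fact automatic: for $0<\lambda<\lambda_\ast$, homogeneity gives $\|\phi/\lambda\|_{L^{m(\cdot)}_{\nu}(\overline{\Omega})}=\lambda_\ast/\lambda>1$, so Proposition~\ref{prop.nor-mod.M}(i) yields $F(\lambda)>1$ directly, without any appeal to strict monotonicity of $F$.
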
	
	
\begin{lemma}[\cite{BS10}]\label{L.reserveHolder}
Let $\mu,\nu$ be two nonnegative and finite Borel measures
on $\overline{\Omega}$, such that there exists some  constant $C>0$ holding
$$
\|\phi\|_{L_\nu^{t(\cdot)}(\overline{\Omega})}\leq C\|\phi\|_{L_\mu^{s(\cdot)}(\overline{\Omega})},\ \ \forall \phi\in C^\infty(\overline{\Omega})
$$
for some $s,t\in C_+(\overline{\Omega})$ satisfying $s(x)<t(x)$ for all $x\in \overline{\Omega}$. Then, there exist an at most countable set $\{x_i\}_{i\in I}$ of distinct points in $\overline{\Omega}$ and
$\{\nu_i\}_{i\in I}\subset (0,\infty)$, such that
$$
\nu=\sum_{i\in I}\nu_i\delta_{x_i}.
$$	
\end{lemma}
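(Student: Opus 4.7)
The plan is to combine the variable-exponent modular/norm machinery of Section~\ref{Pre} with Lions' classical Radon--Nikodym argument from \cite{Lions}. I will proceed in three stages: (i) extend the reverse H\"older inequality from $C^\infty$ test functions to characteristic functions of Borel sets; (ii) localize this inequality on small balls to obtain a polynomial-type bound $\nu(E)\le C'\mu(E)^{\gamma}$ with $\gamma>1$; (iii) apply Besicovitch's differentiation theorem to conclude that $\nu$ is concentrated on the (at most countable) set of atoms of $\mu$.

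For stage~(i), given a Borel set $E\subset\overline{\Omega}$, outer regularity of $\mu$ and $\nu$ combined with Urysohn's lemma provides smooth cut-offs $\phi_n\in C^\infty(\overline{\Omega})$ with $0\le\phi_n\le 1$, $\phi_n\equiv 1$ on $E$, and $\phi_n\to\chi_E$ pointwise $(\mu+\nu)$-a.e. Dominated convergence applied to the modulars $\int(\phi_n/\lambda)^{t(x)}\diff\nu$ and $\int(\phi_n/\lambda)^{s(x)}\diff\mu$ transfers the hypothesis to the limit, giving
\[
\|\chi_E\|_{L_\nu^{t(\cdot)}(\overline{\Omega})}\le C\|\chi_E\|_{L_\mu^{s(\cdot)}(\overline{\Omega})}.
\]
For stage~(ii), fix $x_0\in\overline{\Omega}$ and a small $r>0$, and set
\[
t_r^-:=\min_{\overline{B_r(x_0)}\cap\overline{\Omega}}t,\qquad s_r^+:=\max_{\overline{B_r(x_0)}\cap\overline{\Omega}}s.
\]
For any Borel $E\subset B_r(x_0)\cap\overline{\Omega}$ with $\mu(E),\nu(E)\le 1$, Proposition~\ref{prop.nor-mod.M} applied to $\chi_E$ yields the one-sided bounds $\nu(E)^{1/t_r^-}\le\|\chi_E\|_{L_\nu^{t(\cdot)}}$ and $\|\chi_E\|_{L_\mu^{s(\cdot)}}\le\mu(E)^{1/s_r^+}$; combining these with stage~(i) gives
\[
\nu(E)\,\le\,C^{t_r^-}\mu(E)^{t_r^-/s_r^+}.
\]
Since $t(x)>s(x)$ on the compact set $\overline{\Omega}$, continuity ensures $t_r^-/s_r^+>1$ for all sufficiently small $r$, with $t_r^-/s_r^+\to t(x_0)/s(x_0)$ as $r\to 0$.

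For stage~(iii), covering $\overline{\Omega}$ by finitely many such small balls shows that $\mu(E)=0$ forces $\nu(E)=0$, so $\nu\ll\mu$ and by Radon--Nikodym there is a Borel $f\ge 0$ with $\diff\nu=f\,\diff\mu$. By the Besicovitch differentiation theorem for Radon measures on $\RN$, for $\mu$-a.e.\ $x_0$ one has $f(x_0)=\lim_{r\to 0}\nu(B_r(x_0))/\mu(B_r(x_0))$. If $x_0$ is not an atom of $\mu$, then $\mu(B_r(x_0))\to 0$, and the bound from stage~(ii) applied to $E=B_r(x_0)\cap\overline{\Omega}$ gives
\[
\frac{\nu(B_r(x_0))}{\mu(B_r(x_0))}\;\le\;C^{t_r^-}\,\mu(B_r(x_0))^{t_r^-/s_r^+-1}\;\longrightarrow\;0,
\]
so $f(x_0)=0$. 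Hence $f$ is supported on the atoms of $\mu$, which form an at-most countable set $\{x_i\}_{i\in I}$ since $\mu$ is finite; retaining only those indices with $\nu_i:=\nu(\{x_i\})>0$ yields $\nu=\sum_{i\in I}\nu_i\delta_{x_i}$.

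The main technical obstacle I expect is stage~(i): the clean passage from $C^\infty$ test functions to Borel indicators in the variable-exponent Luxemburg setting, which requires controlling the modulars uniformly in the Luxemburg scaling parameter $\lambda$ so that the Luxemburg norms converge along the approximating sequence. Once stage~(i) is in hand, the localization in stage~(ii) and the Besicovitch-differentiation argument in stage~(iii) are standard measure-theoretic manipulations.
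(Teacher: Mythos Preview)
The paper does not prove this lemma: it is quoted verbatim from \cite{Bonder} and used as a black box in the proof of Theorem~\ref{Theo.ccp}. So there is no ``paper's own proof'' to compare against; the relevant comparison is with the argument in \cite{Bonder}, which in turn is Lions' classical reverse-H\"older argument from \cite{Lions} adapted to variable exponents.

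Your three-stage outline is exactly that argument and is essentially correct. One minor imprecision in stage~(i): for a general Borel set $E$ you cannot directly produce smooth $\phi_n$ with $\phi_n\equiv 1$ on $E$ via Urysohn, since $E$ need not be closed. The clean way is to first obtain the inequality for open sets $U$ (approximate $\chi_U$ from below by smooth compactly supported $\phi_n\uparrow\chi_U$ and pass to the limit in the modulars by monotone convergence), and then for Borel $E$ use outer regularity of $\mu$ together with the monotonicity $\|\chi_E\|_{L_\nu^{t(\cdot)}}\le\|\chi_U\|_{L_\nu^{t(\cdot)}}$. In fact, since stage~(iii) only ever applies the bound to sets of the form $E=B_r(x_0)\cap\overline{\Omega}$, it would even suffice to carry out stage~(i) for balls, where the Urysohn construction works without any subtlety. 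Stages~(ii) and~(iii) are fine as written; the local modular--norm estimates you extract from Proposition~\ref{prop.nor-mod.M} are correct once one observes that $\chi_E$ is supported in $B_r(x_0)\cap\overline{\Omega}$, so only the local extrema $t_r^-$, $s_r^+$ of the exponents enter.
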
	
	
The subsequent result is an extension of the Brezis-Lieb Lemma to the Musielak-Orlicz spaces $L^{\Phi}(\Omega)$. The proof is a direct consequence of \cite[Lemma 3.6]{HS16}, thus we omit it.
\begin{lemma}\label{L.brezis-lieb}
	Let $\Phi$ be as in \eqref{Phi}. Let $\{f_n\}_{n\in\N}$ be a bounded sequence in $L^{\Phi}(\Omega)$
 and $f_n(x)\to f(x)$ a.a. $x\in\Omega$. Then $f\in L^{\Phi}(\Omega)$ and
$$
\lim_{n\to\infty}\int_\Omega \left|\Phi(x,|f_n|)
-\Phi(x,|f_n-f|)-\Phi(x,|f|)\right|\diff x=0.
$$	
%Consequently, thanks to the Dominated Convergence Theorem, for any $\phi \in C(\overline{\Omega})$, we have
%$$ \lim_{n\to\infty}\int_\Omega \left|\phi\mathcal{T}(x,|f_n|)-\phi\mathcal{T}(x,|f_n-f|)-\phi\mathcal{T}(x,|f|)\right|\diff x=0.$$	
\end{lemma}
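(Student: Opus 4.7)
The plan is to establish the statement by decomposing $\Phi$ into its two power components and running the classical Brezis--Lieb argument with an exponent that is allowed to vary with $x$. First I would check that $f\in L^{\Phi}(\Omega)$. Since $\{f_n\}$ is bounded in $L^{\Phi}(\Omega)$, Proposition \ref{prop.nor-mod.D} ensures $\sup_n\rho_\Phi(f_n)<\infty$. Pointwise a.e., $\Phi(x,|f_n(x)|)\to\Phi(x,|f(x)|)$, so Fatou's lemma gives $\rho_\Phi(f)\leq \liminf_n\rho_\Phi(f_n)<\infty$, i.e.\ $f\in L^{\Phi}(\Omega)$.

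The pointwise key ingredient is the following uniform variant of the elementary inequality underlying Brezis--Lieb: for each compact interval $[\alpha,\beta]\subset(1,\infty)$ and each $\varepsilon>0$, there exists $C_{\varepsilon}=C_\varepsilon(\alpha,\beta)>0$ such that
\begin{equation*}
\bigl|\,|a+b|^{p}-|a|^{p}-|b|^{p}\,\bigr|\leq \varepsilon|a|^{p}+C_{\varepsilon}|b|^{p},\qquad \forall\, a,b\in\R,\ \forall\,p\in[\alpha,\beta].
\end{equation*}
Applying this with $a=f_n-f$, $b=f$ and $p=r(x)\in[r^-,r^+]$, then separately with $p=s(x)\in[s^-,s^+]$, and multiplying by $b(x)$ and $c(x)$ respectively and summing, I get the pointwise inequality
\begin{equation*}
\bigl|\Phi(x,|f_n|)-\Phi(x,|f_n-f|)-\Phi(x,|f|)\bigr|\leq \varepsilon\,\Phi(x,|f_n-f|)+C_{\varepsilon}\,\Phi(x,|f|)
\end{equation*}
holding for a.e.\ $x\in\Omega$.

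Next, set
\begin{equation*}
W_{n,\varepsilon}(x):=\Bigl(\bigl|\Phi(x,|f_n|)-\Phi(x,|f_n-f|)-\Phi(x,|f|)\bigr|-\varepsilon\,\Phi(x,|f_n-f|)\Bigr)^{+}.
\end{equation*}
The above inequality gives $0\leq W_{n,\varepsilon}(x)\leq C_{\varepsilon}\Phi(x,|f(x)|)$, and since $f_n(x)\to f(x)$ a.e.\ together with continuity of $t\mapsto\Phi(x,t)$ and $\Phi(x,0)=0$, we have $W_{n,\varepsilon}(x)\to 0$ a.e. Because $\Phi(\cdot,|f|)\in L^{1}(\Omega)$ (by Step 1), the dominated convergence theorem yields $\int_\Omega W_{n,\varepsilon}\diff x\to 0$ as $n\to\infty$. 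Consequently,
\begin{equation*}
\limsup_{n\to\infty}\int_\Omega\bigl|\Phi(x,|f_n|)-\Phi(x,|f_n-f|)-\Phi(x,|f|)\bigr|\diff x\leq \varepsilon\sup_{n}\rho_{\Phi}(f_n-f).
\end{equation*}
Since $\{f_n-f\}$ is bounded in $L^{\Phi}(\Omega)$, $\sup_n\rho_{\Phi}(f_n-f)<\infty$ by Proposition \ref{prop.nor-mod.D}, and letting $\varepsilon\to 0^+$ completes the proof.

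The main obstacle is the uniform pointwise inequality: one must verify that the constant in the classical Brezis--Lieb algebraic inequality can be taken independent of the exponent $p$ as long as $p$ varies in a compact subinterval of $(1,\infty)$. This follows from a continuity/compactness argument on the optimal constant (equivalently, a direct elementary proof via the inequality $||a+b|^p-|a|^p|\leq \varepsilon|a|^p+C_\varepsilon|b|^p$ with $C_\varepsilon$ depending continuously on $p$), which is the reason the authors are able to cite \cite{Ho-Sim} directly; everything else is standard Lebesgue theory.
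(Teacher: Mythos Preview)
Your argument is correct and is precisely the variable-exponent Brezis--Lieb argument that the paper invokes by citing \cite[Lemma 3.6]{Ho-Sim} without giving details; in particular, the uniform-in-$p$ algebraic inequality over $[r^-,r^+]\cup[s^-,s^+]$ followed by the $W_{n,\varepsilon}$/dominated convergence step is exactly the intended proof. The only point worth making explicit is that $\sup_n\rho_\Phi(f_n-f)<\infty$ follows from $|f_n-f|^{p}\leq 2^{p-1}(|f_n|^{p}+|f|^{p})$ applied with $p=r(x),s(x)$ together with Step~1, which you implicitly use.
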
	
%========================PROOOF OF THE CCP=========================%
We are now in a position to prove Theorem~\ref{Theo.ccp}.
\begin{proof}[\textbf{Proof of Theorem~\ref{Theo.ccp}}]
Let $v_n=u_n-u$. Then, up to a subsequence, we have
\begin{eqnarray}\label{T.conv.of.v_n}
	\begin{cases}
		v_n(x) &\to  \quad 0 \quad \text{a.a.}\quad  x\in\Omega,\\
		v_n &\rightharpoonup \quad 0 \quad \text{in}\quad  W_0^{1,\mathcal{H}}(\Omega).
	\end{cases}
\end{eqnarray}
Applying Lemma~\ref{L.brezis-lieb} with $\Phi=\mathcal{B}$, we deduce
$$
\lim_{n\to\infty}\int_\Omega\left|\phi\mathcal{B}(x,u_n)-\phi\mathcal{B}(x,v_n) - \phi\mathcal{B}(x,u)\right|\diff x=0,\quad \forall \phi \in C(\ol{\Omega}).$$
Thus,
\begin{equation}\label{T.w*-vn}
\bar{\nu}_n:=	\mathcal{B}(\cdot,v_n)\quad\overset{\ast }{\rightharpoonup}\quad\bar{\nu}:=\nu-\mathcal{B}(\cdot,u)\quad \text{in} \ \ \mathcal{M}(\overline{\Omega}).
\end{equation}		
	It is clear that $\{\mathcal{H}(\cdot,|\nabla v_n|)\}_{n\in\N}$ is bounded in $L^1(\Omega)$ due to the boundedness of $\{u_n\}_{n\in\N}$ in $W_0^{1,\mathcal{H}}(\Omega)$. So up to a subsequence, we have
\begin{equation}\label{bar.mu}
	\bar{\mu}_n:=\mathcal{H}(\cdot,|\nabla v_n|)\quad \overset{\ast }{\rightharpoonup }\quad \bar{\mu}\quad \text{in}\quad \mathcal{M}(\overline{\Omega})
\end{equation}
for some finite nonnegative Radon measure $\bar{\mu}$ on $\overline{\Omega}$. 

In view of Lemma~\ref{L.reserveHolder}, \eqref{T.ccp.form.nu} will be proved if we can show that
\begin{equation}\label{PT1.1.RH}
	\|\phi\|_{{L}^{r_1(\cdot)}_{\bar{\nu}}(\overline{\Omega}) }\leq C \|\phi\|_{{L}^{q(\cdot)}_{\bar{\mu}}(\overline{\Omega}) },\quad \forall \phi\in C^\infty(\overline{\Omega})
\end{equation}
for some positive constant $C$. To this end, let $\phi\in C^\infty(\overline{\Omega})$. It is clear that $\phi v\in W_0^{1,\mathcal{H}}(\Omega)$ for any $v\in W_0^{1,\mathcal{H}}(\Omega)$; hence, by invoking \eqref{S} we obtain
\begin{align}
S\|\phi v_n\|_{\mathcal{B}} \leq \norm{\nabla(\phi v_n)}_{\mathcal{H}}\leq \norm{\phi\nabla v_n}_{\mathcal{H}}+\norm{v_n\nabla\phi}_{\mathcal{H}}\leq \|\phi \nabla v_n\|_{\mathcal{H}} +\|\phi\|_{C^1(\overline{\Omega})}\|v_n\|_{{\mathcal{H}}}.\label{T.est.norm1}
\end{align} 	
Set  $\bar{\lambda}_n:=\|\phi\|_{L^{r_1(\cdot)}_{\bar{\nu}_n}(\overline{\Omega})}$ for $n\in\N$ and $\bar{\lambda}:=\|\phi\|_{L^{r_1(\cdot)}_{\bar{\nu}}(\overline{\Omega})}$. Then, by Lemma~\ref{L.convergence} and \eqref{T.w*-vn}, it holds 
\begin{equation}\label{lambda_n_bar}
	\lim_{n\to\infty}\bar{\lambda}_n=\bar{\lambda}.
\end{equation}
 Clearly, \eqref{PT1.1.RH} holds for the case $\bar{\lambda}=0$. Let us consider the case $\bar{\lambda}>0$, and then we can suppose that $\bar{\lambda}_n>0$ for all $n \in \mathbb{N}$.
 By the boundedness of $\{u_n\}_{n\in\N}$ in $W_0^{1,\mathcal{H}}(\Omega)$, from Proposition~\ref{prop_S-C-E} it holds
\begin{equation}\label{vn_M}
M:=1+\max \bigg\{\sup_{n\in \mathbb{N}}\int_{\Omega} \mathcal{B}(x,v_n) \diff x,\,	\sup_{n\in \mathbb{N}}\int_{\Omega} \mathcal{H}(x,|\nabla v_n|) \diff x \bigg \}\in [1,\infty).
\end{equation} 
Invoking Proposition~\ref{prop.nor-mod.M} and \eqref{young} we find $C_M>1$ such that
\begin{align*}
	\notag
1=\int_{\overline{\Omega}} \left|\frac{\phi}{\bar{\lambda}_n}\right|^{r_1(x)} \diff \bar{\nu}_n &=\int_{\Omega} \left|\frac{\phi}{\bar{\lambda}_n}\right|^{r_1(x)}\bigg[c_1(x)|v_n|^{r_1(x)}+c_2(x)a(x)^{\frac{r_2(x)}{q(x)}}|v_n|^{r_2(x)}\bigg]\diff x \\
&\leq \int_{\Omega} c_1(x)\left|\frac{\phi v_n}{\bar{\lambda}_n}\right|^{r_1(x)}\diff x+\int_{\Omega}c_2(x)a(x)^{\frac{r_2(x)}{q(x)}}|v_n|^{r_2(x)}\left(\frac{1}{2M}+C_M\left|\frac{\phi}{\bar{\lambda}_n}\right|^{r_2(x)}  \right)\diff x \notag \\
 & \leq \frac{1}{2M} \int_{\Omega} \mathcal{B}\left(x,v_n\right)\diff x +C_M\int_{\Omega} \mathcal{B}\left(x,\frac{\phi v_n}{\bar{\lambda}_n}\right) \diff x.
\end{align*}
From this, \eqref{vn_M} and the definition of $\mathcal{B}$ we easily obtain
\begin{align}
	\notag
	1\leq\int_{\Omega} \mathcal{B}\left(x,\frac{(2C_M)^{\frac{1}{r_1^-}}\phi v_n}{\bar{\lambda}_n}\right) \diff x.
\end{align}
Thus, in view of Proposition~\ref{prop.nor-mod.D} we obtain
\begin{align*}
(2C_M)^{-\frac{1}{r_1^-}}\bar{\lambda}_n\leq \|\phi v_n\|_{\mathcal{B}},\quad \forall n \in \mathbb{N}.
\end{align*}
Combining this with \eqref{lambda_n_bar} gives
\begin{equation}\label{T.norm.left}
0<(2C_M)^{-\frac{1}{r_1^-}}\|\phi\|_{L^{r_1(\cdot)}_{\bar{\nu}}(\overline{\Omega})}\leq\liminf_{n \to \infty}\|\phi v_n\|_{\mathcal{B}}.
\end{equation}
Next, we set $\delta_n:=\|\phi \nabla v_n\|_{\mathcal{H}}$ for $n\in\N$. Note that $v_n\to 0$ in $L^{\mathcal{H}}(\Omega)$ due to \eqref{T.conv.of.v_n} and Proposition~\ref{prop_S-C-E}. By this, \eqref{T.est.norm1} and \eqref{T.norm.left} we may assume that  $\delta_n>0$ for all $n \in \mathbb{N}$.  Employing Proposition~\ref{prop.nor-mod.D} and \eqref{young} again we find $\bar{C}_M>1$ such that 
\begin{align*}
	\notag
	1&=\int_{\Omega} \round{ \left|\frac{\phi \nabla v_n}{\delta_n}\right|^{p(x)} + a(x)\left|\frac{\phi \nabla v_n}{\delta_n}\right|^{q(x)}} \diff x\\
	& \leq  \int_\Omega \left(\frac{1}{2M}+\bar{C}_M\left|\frac{\phi}{\delta_n}\right|^{q(x)}\right)|\nabla v_n|^{p(x)} \diff x + \int_{\Omega} \left|\frac{\phi}{\delta_n}\right|^{q(x)} a(x)|\nabla v_n|^{q(x)} \diff x.
	\end{align*}
From this and \eqref{vn_M} it follows that
\begin{equation*}
	1 \leq \int_{\overline{\Omega}}  \left|\frac{(2\bar{C}_M)^{\frac{1}{q^-}}\phi}{\delta_n}\right|^{q(x)} \diff \bar{\mu}_n.
\end{equation*}
Hence, by Proposition~\ref{prop.nor-mod.M} we obtain
\begin{equation*}
	(2\bar{C}_M)^{-\frac{1}{q^-}}\delta_n \leq  \|\phi\|_{L^{q(\cdot)}_{\bar{\mu}_n}(\overline{\Omega})}.
\end{equation*}
By virtue of Lemma~\ref{L.convergence} we deduce from the preceding inequality and \eqref{bar.mu} that
\begin{equation} \label{T.norm.right}
	\limsup_{n \to \infty}\|\phi \nabla v_n\|_{\mathcal{H}} \leq (2\bar{C}_M)^{\frac{1}{q^-}} \|\phi\|_{L^{q(\cdot)}_{\bar{\mu}}(\overline{\Omega})}.
\end{equation}
Utilizing \eqref{T.norm.left}, \eqref{T.norm.right} and the fact that $v_n\to 0$ in $L^{\mathcal{H}}(\Omega)$, the desired inequality \eqref{PT1.1.RH} can easily obtain from \eqref{T.est.norm1}; hence, \eqref{T.ccp.form.nu} has been proved.

Next, we claim that $\{x_i\}_{i\in I}\subset \mathcal{C}$. Assume by contradiction that there is some $x_i\in \overline{\Omega}\setminus\mathcal{C}$. Let $\delta>0$ be such that $\overline{B_{2\delta}(x_i)}\subset \mathbb{R}^N\setminus\mathcal{C}$. Then, by setting $B:=B_\delta(x_i)\cap \overline{\Omega}$, it holds  $\overline{B}\subset \overline{\Omega}\setminus\mathcal{C}$; hence, $r_1(x)<p^\ast(x)$ and  $r_2(x)=q^\ast(x)-(p^\ast(x)-r_1(x))<q^\ast(x)$  for all $x\in\overline{B}$. Consequently, 
$$\int_{B}\mathcal{B}(x,u_n)\diff x\to \int_{B}\mathcal{B}(x,u)\diff x$$
in view of Proposition~\ref{prop_S-C-E}. From this and the fact that $\nu (B)\leq \liminf_{n\to \infty}\int_{B}\mathcal{B}(x,u_n)\diff x$ (see \cite[Proposition 1.203]{FL07}), we obtain $\nu (B)\leq \int_{B}\mathcal{B}(x,u) \diff x.$
On the other hand, by \eqref{T.ccp.form.nu} we have
$$\nu (B)\geq \int_{B}\mathcal{B}(x,u)\diff x+\nu_i>\int_{B}\mathcal{B}(x,u)\diff x,$$
 a contradiction. So, $\{x_i\}_{i\in I}\subset \mathcal{C}$.

In order to show \eqref{T.ccp.nu_mu}, let $i\in I$ and let $\eta$ be in $C_c^\infty(\mathbb{R}^N)$ such that $0\leq \eta(\cdot)\leq 1,$ $\eta(\cdot)\equiv 1$ on $B_{1/2}(0)$ and $\eta(\cdot)\equiv 0$ outside $B_1(0)$. For $\epsilon>0$ and $h\in C(\overline{\Omega})$, we denote
\begin{gather*}
\Omega_{i,\epsilon}:=B_\epsilon(x_i)\cap \overline{\Omega},\quad h^+_{\epsilon}:=\sup_{x\in \Omega_{i,\epsilon}}h(x),\quad
h^-_{\epsilon}:=\inf_{x\in \Omega_{i,\epsilon}}h(x)
\end{gather*}
and define
$$\phi_{i,\epsilon}(x):=\eta\left(\frac{x-x_i}{\epsilon}\right) \quad\text{for} \ x\in\R^N.$$
Using  \eqref{S} again with $\phi=\phi_{i,\epsilon}u_n$, we have
\begin{align} \label{RH2}
	\notag
	S\|\phi_{i,\epsilon} u_n\|_{{\mathcal{B}}}& \leq \|\nabla (\phi_{i,\epsilon} u_n)\|_{\mathcal{H}}  \\ \notag
	& \leq \|\phi_{i,\epsilon}\nabla u_n\|_{\mathcal{H}}+\|u_n \nabla \phi_{i,\epsilon}\|_{\mathcal{H}}\\  \notag
	& \leq \|\phi_{i,\epsilon}\nabla u_n\|_{\mathcal{H}}+\|(u_n-u) \nabla \phi_{i,\epsilon}\|_{\mathcal{H}}+\|u \nabla \phi_{i,\epsilon}\|_{\mathcal{H}}\\ 
	& \leq \|\phi_{i,\epsilon}\nabla u_n\|_{\mathcal{H}}+\|\phi_{i,\epsilon}\|_{C^1(\overline{\Omega})}\norm{u_n-u}_{\mathcal{H}}+\|u \nabla \phi_{i,\epsilon}\|_{\mathcal{H}}.
\end{align}
Invoking Proposition~\ref{prop.nor-mod.D} we obtain
\begin{align*}
\|\phi_{i,\epsilon} u_n\|_{{\mathcal{B}}}&\geq \min\left\{\left(\int_{\Omega_{i,\epsilon}} \mathcal{B}\left(x,\phi_{i,\epsilon} u_n\right)\diff x\right)^{\frac{1}{(r_1)_{\epsilon}^-}},\left(\int_{\Omega_{i,\epsilon}} \mathcal{B}\left(x,\phi_{i,\epsilon} u_n\right)\diff x\right)^{\frac{1}{(r_2)_{\epsilon}^+}}\right\}\\
	&\geq \min\left\{\left(\int_{\Omega_{i,\epsilon/2}} \mathcal{B}\left(x,u_n\right)\diff x\right)^{\frac{1}{(r_1)_{\epsilon}^-}},\left(\int_{\Omega_{i,\epsilon/2}} \mathcal{B}\left(x,u_n\right)\diff x\right)^{\frac{1}{(r_2)_{\epsilon}^+}}\right\}.
\end{align*}
From this and \eqref{ccp.nu}, we arrive at
\begin{align}\label{nu_i}
\liminf_{ n \to \infty} \|\phi_{i,\epsilon} u_n\|_{{\mathcal{B}}} \geq \min \left\{ 
\nu(\Omega_{i,\epsilon/2})^{\frac{1}{(r_1)_{\epsilon}^-}}, \nu(\Omega_{i,\epsilon/2})^{\frac{1}{(r_2)_{\epsilon}^+}}
\right\}.
\end{align}
Similarly, we get that
\begin{align*}
	\|\phi_{i,\epsilon}\nabla u_n \|_{\mathcal{H}}&\leq \max\left\{\left(\int_{\Omega} \mathcal{H}(x,\phi_{i,\epsilon}|\nabla u_n|)\diff x\right)^{\frac{1}{p_{\epsilon}^-}},\left(\int_{\Omega}\mathcal{H}(x,\phi_{i,\epsilon}|\nabla u_n|)\diff x\right)^{\frac{1}{q_{\epsilon}^+}}\right\}\\
	&\leq \max\left\{\left(\int_{\Omega} \phi_{i,\epsilon}\mathcal{H}(x,|\nabla u_n|)\diff x\right)^{\frac{1}{p_{\epsilon}^-}},\left(\int_{\Omega}\phi_{i,\epsilon}\mathcal{H}(x,|\nabla u_n|)\diff x\right)^{\frac{1}{q_{\epsilon}^+}}\right\}.
\end{align*}
From this and \eqref{ccp.mu}, we arrive at
\begin{align*}
	\limsup_{ n \to \infty} \|\phi_{i,\epsilon}\nabla u_n \|_{\mathcal{H}} \leq \max\left\{\left(\int_{\overline{\Omega}} \phi_{i,\epsilon}\diff \mu\right)^{\frac{1}{p_{\epsilon}^-}},\left(\int_{\overline{\Omega}} \phi_{i,\epsilon}\diff \mu\right)^{\frac{1}{q_{\epsilon}^+}}\right\}.
\end{align*}
Thus, we obtain
\begin{equation}\label{mu_i}
\limsup_{n \to \infty}\| \phi_{i,\epsilon}\nabla u_n \|_{\mathcal{H}} \leq \max \left \{\mu (\Omega_{i,\epsilon})^{\frac{1}{p_{\epsilon}^-}}, \mu (\Omega_{i,\epsilon})^{\frac{1}{q_{\epsilon}^+}}  \right \}.
\end{equation}
Moreover, in view of the compact embedding \eqref{Prop-S-C} we deduce from \eqref{ccp.weak} that
\begin{equation}\label{P.CCP.lim_un}
	u_n\to u \ \ \text{in} \ \ L^{\mathcal{H}}(\Omega).
\end{equation}
Passing to the limit as $n\to \infty$ in \eqref{RH2} and taking into account \eqref{nu_i}-\eqref{P.CCP.lim_un}, we arrive at 
\begin{equation}\label{T.estforsingular}
S \min \left\{ 
\nu(\Omega_{i,\epsilon/2})^{\frac{1}{(r_1)_{\epsilon}^-}}, \nu(\Omega_{i,\epsilon/2})^{\frac{1}{(r_2)_{\epsilon}^+}}
\right\}\leq \max \left \{\mu (\Omega_{i,\epsilon})^{\frac{1}{p_{\epsilon}^-}}, \mu (\Omega_{i,\epsilon})^{\frac{1}{q_{\epsilon}^+}}  \right \} + 
\|u \nabla \phi_{i,\epsilon}\|_{\mathcal{H}}.
\end{equation}
On the other hand, by Proposition~\ref{prop_S-C-E} it holds  $u\in L^{\mathcal{G}^\ast}(\Omega)$, where $\mathcal{G}^\ast(x,t)=|t|^{p^\ast(x)}+a(x)^{\frac{q^*(x)}{q(x)}}|t|^{q^\ast(x)}$ for $(x,t)\in \overline{\Omega}\times \R$. Using this fact and Proposition~\ref{prop.Holder} we obtain 
\begin{align}\label{est.critical.1}
\notag\int_{\Omega} \mathcal{H}(x,|u\nabla \phi_{i,\epsilon}|) \diff x&=\int_{\Omega_{i,\epsilon}} \left[|u\nabla \phi_{i,\epsilon}|^{p(x)}+a(x)|u\nabla \phi_{i,\epsilon}|^{q(x)}\right] \diff x\\
\notag&\leq 2\big\||u|^{p(\cdot)}\big\|_{L^{\frac{p^\ast(\cdot)}{p(\cdot)}}(\Omega_{i,\epsilon})}
\big\||\nabla \phi_{i,\epsilon}|^{p(\cdot)} \big\|_{L^{\frac{N}{p(\cdot)}}
	(B_{\epsilon}(x_i))}\\
&\qquad\qquad\qquad+2\big\|\,a|u|^{q(\cdot)}\,\big\|_{L^{\frac{q^\ast(\cdot)}{q(\cdot)}}(\Omega_{i,\epsilon})}\big\||\nabla \phi_{i,\epsilon}|^{q(\cdot)} \big\|_{L^{\frac{N}{q(\cdot)}}
	(B_{\epsilon}(x_i))}.
\end{align}
By Proposition~\ref{prop.nor-mod.D}, it follows that
$$
\big\||\nabla \phi_{i,\epsilon}|^{p(\cdot)} \big\|_{L^{\frac{N}{p(\cdot)}}
	(B_{\epsilon}(x_i))}
\le \round{1+ \int_{B_\epsilon(x_i)} |\nabla \phi_{i,\epsilon}|^N \diff x}^{p^+/N}  =\round{ 1+\int_{B_1(0)} |\nabla\eta(y)|^N\, \diff y}^{p^+/N},
$$
and in the same manner, we also have
$$ \big\||\nabla \phi_{i,\epsilon}|^{q(\cdot)}\big\|_{L^{\frac{N}{q(\cdot)}}(B_{\epsilon}(x_i))}\leq \round{ 1+\int_{B_1(0)} |\nabla\eta(y)|^N\, \diff y}^{q^+/N}.$$
Utilizing the last two estimates we infer from  \eqref{est.critical.1} that
\begin{equation*}
	\int_{\Omega} \mathcal{H}(x,|u\nabla \phi_{i,\epsilon}|) \diff x\to 0\ \ \text{as}\ \ \epsilon \to 0^+.
\end{equation*}
Equivalently, in view of Proposition~\ref{prop.nor-mod.D}, it holds
\begin{equation}\label{est.critical.2}
\|u \nabla \phi_{i,\epsilon}\|_{\mathcal{H}}\to 0\ \ \text{as}\ \ \epsilon \to 0^+.
\end{equation}
Passing to the limit as $\epsilon \to 0^+$ in \eqref{T.estforsingular} and taking into account  \eqref{est.critical.2}, the continuity of $r_1,r_2,q,p$ on $\overline{\Omega},$ and the fact that $x_i\in  \mathcal{C}$ we obtain
$$S \min\left\{\nu_i^{1/p^\ast(x_i)},\nu_i^{1/q^\ast(x_i)}\right\} \le \max\left\{\mu_i^{1/p(x_i)},\mu_i^{1/q(x_i)}\right\},
$$
where $\mu_i := \mu(\{x_i\})$. In particular, $\{x_i\}_{i\in I}$ are atoms of $\mu$. 

Finally, in order to get \eqref{T.ccp.form.mu}, noticing that for any $\phi\in C(\overline{\Omega})$  with $\phi\geq 0$, the functional $u\mapsto \int_{\Omega}\phi(x)\left[|\nabla u|^{p(x)}+a(x)|\nabla u|^{q(x)}\right]\diff x$ is convex and differentiable on $W_0^{1,\mathcal{H}}(\Omega)$. Hence, it is weakly lower semicontinuous and therefore,
$$\int_{\overline{\Omega }}\phi(x)\left[|\nabla u|^{p(x)}+a(x)|\nabla u|^{q(x)}\right]\diff x\leq \liminf_{n\to  \infty } \int_{\overline{\Omega } }\phi(x)\left[|\nabla u_n|^{p(x)}+a(x)|\nabla u_n|^{q(x)}\right]\diff x=\int_{\overline{\Omega }}\phi \diff \mu.$$
Thus, $\mu\geq |\nabla u|^{p(x)}+a(x)|\nabla u|^{q(x)}=\mathcal{H}(x,|\nabla u|)$. By extracting $\mu$ to its atoms, we deduce \eqref{T.ccp.form.mu}. The proof is complete.
\end{proof}
%=====================SECTION 4. APPLICATIONS========================== %
\section{Proofs of Multiplicity Results}\label{Existence}

In this section, we will prove the multiplicity results stated in of Section ~\ref{MainResults}. In the sequel, we denote $X:=\left(W_0^{1,\mathcal{H}}(\Omega),\|\cdot\|\right)$ with $\|\cdot\|$ given by \eqref{norm} and denote by $X^*$ the dual space of $X$. %We also denote the dual space of $X$ and its norm by  $X^\ast$ and $\|\cdot\|_*$, respectively. 
We also denote by $B_\tau$ and $|Q|$ the open ball in $X$ centered at $0$ with radius $\tau$ and the Lebesgue measure of $Q\subset\R^N$, respectively. %To seek solution of \eqref{e1.1'}, we consider the associated  energy functional as follows
%\begin{align}\label{J_lambda}
%	\notag J_\lambda(u)& :=\widehat{M}\left(\int_\Omega \mathcal{A}(x,|\nabla u|) \diff x\right)-\lambda\int_\Omega F(x,u)\diff x-\theta\int_\Omega \widehat{B}(x,u)\diff x,\quad u\in X,
%\end{align}		
%where $\widehat{M}(t):=\int_0^t M(s)\diff s$, $\widehat{B}(x,t):= \int_0^t B(x,s)\diff s$. We can see clearly that  $J_\lambda: X\to\R$ is of class $C^1(X,\mathbb{R})$. 
\subsection{The generalized concave-convex case}${}$

In this subsection we will prove Theorem~\ref{Theo.cc} via the genus theory along with a truncation technique used in \cite{BGP96,FN16} as follows. Let all hypotheses in Theorem~\ref{Theo.cc} hold and let $\theta=1$.
From \eqref{q-r1} and the continuity of $M$ on $(0,\tau_0)$, we can fix  $t_0\in (0,\min\{\tau_0,1\})$ such that
\begin{equation}\label{t0-sub}
	M(t_0)q^+<m_0r_1^-.
\end{equation} 
Define a truncation of $M$ as
\begin{equation}\label{Def.M0}
	M_0(t):=\begin{cases}
		M(t),\ \ &0\leq t\leq t_0,\\
		M(t_0),\ \ &t>t_0
	\end{cases}
\end{equation}
and define 
$$\widehat{M}_0(t):=\int_0^t M_0(s)\diff s\ \  \text{for}\ \  t\geq 0.$$ Clearly, $M_0$ is a  continuous function on $[0,+\infty)$ and satisfies
\begin{equation}\label{Est.M0}
	m_0\leq M_0(t)\leq M(t_0),\quad \forall t\in [0,+\infty)
\end{equation}		
and 
\begin{equation}\label{Est.hat.M0}
	m_0t\leq \widehat{M}_0(t)\leq M(t_0)t,\quad \forall t\in[0,+\infty).
\end{equation}
For each $\lambda>0$, define the modified energy functional $\Phi_\lambda:X \to \mathbb{R}$ as
\begin{align}\label{Phi-Lam}
	 \Phi_\lambda(u):=&\widehat{M}_0\left(\int_\Omega \mathcal{A}(x,\nabla u) \diff x\right)-\lambda\int_\Omega F(x,u) 
	\diff x-\int_\Omega \widehat{B}(x,u) \diff x,\quad u\in X,
\end{align}
where 
\begin{equation}\label{B.hat}
	\widehat{B}(x,t):=\int_0^t B(x,s)\diff s.
\end{equation}
By a standard argument, we can show that $\Phi_\lambda$ is of class $C^1$ in view of Proposition~\ref{prop_S-C-E}, and its Fr\'echet derivative $\Phi^\prime_\lambda: X\to X^\ast$ is given by 
\begin{multline}\label{Diff1}
\left\langle \Phi^\prime_\lambda (u),v\right\rangle=  M_0\left(\int_\Omega\mathcal{A}(x,\nabla u)\,\diff x\right)\int_\Omega A(x,\nabla u)\cdot\nabla v\,\diff x\\
-\lambda\int_\Omega f(x,u)v\,\diff x-\int_\Omega B(x,u)v\,\diff x,\quad \forall\, u,v\in X.
\end{multline}
 %Moreover, $\Phi_\lambda$ is even on $X$ and $\Phi_\lambda(0)=0.$
  Obviously, any critical point $u$ of $\Phi_\lambda$ with $\int_\Omega \mathcal{A}(x,\nabla u)\,\diff x\leq t_0$ is a solution to problem~\eqref{e1.1'}.

Before going to some auxiliary results, we introduce several notations for simplicity. From \eqref{t0-sub} and $(\mathcal P)$ we have
\begin{equation}\label{K0}
	K_0:=\frac{1}{4}\left(\frac{m_0}{q^+}-\frac{M(t_0)}{r_1^-}\right)>0
\end{equation}
and 
\begin{equation}\label{lambda1}
	\lambda_1:=\frac{\kappa_1r_1^- K_0}{C_4+C_5}>0,
\end{equation}
with $C_4$ and $C_5$ given in $(\mathcal{F}_2)$. 

The next lemma is essential to verify the (local) $\textup{(PS)}$ condition for $\Phi_\lambda$.
\begin{lemma} \label{Le.PS1}
Let $\lambda \in (0,\lambda_1)$. Then,  $\Phi_\lambda$ satisfies the $\textup{(PS)}_c$ condition with $c\in \mathbb{R}$ satisfying 
\begin{align}\label{Le.PS1.c}
	c&<K_0\min\{S^{n_1},S^{n_2}\}\min\{m_0^{\tau_1},m_0^{\tau_2}\}-K\max\left\{\lambda^{\frac{\ell^+}{\ell^+-1}},\lambda^{\frac{\ell^-}{\ell^--1}}\right\},
\end{align}
where $S$ is defined in \eqref{S},  $n_1:=\left(\frac{pq^*}{q^*-p}\right)^-$, $n_2:=\left(\frac{qp^*}{p^*-q}\right)^+$,  $\tau_1:=\left(\frac{p}{q^*-p}\right)^-$, $\tau_2:=\left(\frac{q}{p^*-q}\right)^+$, $\ell(\cdot):=\frac{p(\cdot)}{\sigma(\cdot)}$, and $K$ is a positive constant depending only on the data given by \eqref{K} below.
\end{lemma}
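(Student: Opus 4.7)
The plan is to adapt the classical Brezis--Nirenberg scheme to the double phase critical setting, combining the concentration--compactness principle of Theorem~\ref{Theo.ccp} with the $(S_+)$-property of the double phase operator and the explicit bounds on the truncated Kirchhoff term $M_0$. First, I would establish boundedness of any $\textup{(PS)}_c$ sequence $\{u_n\}\subset X$ by examining $\Phi_\lambda(u_n)-\tfrac{1}{r_1^-}\langle \Phi'_\lambda(u_n),u_n\rangle$: by \eqref{Est.hat.M0} and the upper bound $M_0\leq M(t_0)$ from \eqref{Est.M0} this combination produces a positive multiple $4K_0\int_\Omega \mathcal{H}(x,|\nabla u_n|)\,\diff x$; see \eqref{K0}. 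The critical contribution $\int_\Omega[\widehat B(x,u_n)-B(x,u_n)u_n/r_1^-]\,\diff x$ is non-positive because $r_1(\cdot),r_2(\cdot)\geq r_1^-$ and may be discarded, while the subcritical contribution is handled by $(\mathcal F_2)(i)$: the $|u_n|^{p(x)}$ piece is absorbed into the Kirchhoff term via the Poincar\'e-type inequality \eqref{mu1} together with the explicit constraint $\lambda<\lambda_1$ from \eqref{lambda1}, and the $|u_n|^{\sigma(x)}$ piece is of strictly lower order since $\sigma^+<p^-$. Proposition~\ref{prop.nor-mod.D} then yields boundedness of $\{u_n\}$ in $X$.

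Passing to a subsequence, $u_n\rightharpoonup u$ in $X$, $u_n\to u$ a.e.\ and in $L^\mathcal{H}(\Omega)$ by Proposition~\ref{prop_S-C-E}, and $\mathcal{H}(\cdot,|\nabla u_n|)\overset{*}{\rightharpoonup}\mu$, $\mathcal{B}(\cdot,u_n)\overset{*}{\rightharpoonup}\nu$ in $\mathcal{M}(\overline{\Omega})$. Applying Theorem~\ref{Theo.ccp} I get atoms $\{\mu_i,\nu_i\}_{i\in I}$ concentrated at points $x_i\in\mathcal{C}$ satisfying \eqref{T.ccp.nu_mu}. To eliminate atoms, fix $i\in I$ and use the cutoff $\phi_{i,\epsilon}$ from the proof of Theorem~\ref{Theo.ccp} to test $\langle \Phi'_\lambda(u_n),\phi_{i,\epsilon}u_n\rangle\to 0$. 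As $n\to\infty$ and then $\epsilon\to 0^+$, the $f$-term vanishes by the compact embedding $X\hookrightarrow\hookrightarrow L^{\alpha(\cdot)}(\Omega)$ (Proposition~\ref{prop_S-C-E}), the cross-term involving $\nabla\phi_{i,\epsilon}$ is handled exactly as in the estimates \eqref{est.critical.1}--\eqref{est.critical.2}, and the Kirchhoff coefficient has the uniform lower bound $m_0$ by \eqref{Est.M0}. The net relation is $m_0\mu_i\leq\nu_i$. Combined with \eqref{T.ccp.nu_mu} and elementary casework on whether $\nu_i\gtrless 1$ (determining which branch of the $\min$/$\max$ is active), this produces the explicit lower bound $\nu_i\geq\min\{S^{n_1},S^{n_2}\}\min\{m_0^{\tau_1},m_0^{\tau_2}\}$, with the exponents $n_1,n_2,\tau_1,\tau_2$ emerging precisely as the solution to the resulting algebraic inequality.

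To reach a contradiction, I would return to $c=\lim\bigl[\Phi_\lambda(u_n)-\tfrac{1}{r_1^-}\langle \Phi'_\lambda(u_n),u_n\rangle\bigr]$ but now retain the atom at $x_i$: the boundedness calculation upgrades to $c\geq K_0\,\nu_i-K\max\{\lambda^{\ell^+/(\ell^+-1)},\lambda^{\ell^-/(\ell^--1)}\}$, where the $\lambda$-term comes from Young's inequality \eqref{young} applied to the $|u|^{\sigma(x)}$ part of $\lambda F$ with conjugate pair $(\ell(x),\ell(x)/(\ell(x)-1))$. Substituting the lower bound on $\nu_i$ contradicts the hypothesis \eqref{Le.PS1.c}, so $I=\emptyset$ and $\nu=\mathcal{B}(\cdot,u)$ as measures; Lemma~\ref{L.brezis-lieb} then gives $\int_\Omega B(x,u_n)(u_n-u)\,\diff x\to 0$. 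Feeding this into $\langle \Phi'_\lambda(u_n),u_n-u\rangle\to 0$ leaves $M_0\bigl(\int_\Omega\mathcal{A}(x,\nabla u_n)\,\diff x\bigr)\int_\Omega A(x,\nabla u_n)\cdot\nabla(u_n-u)\,\diff x\to 0$, and since $M_0\geq m_0>0$ the standard $(S_+)$-property of $-\operatorname{div}A$ on $X$ delivers $u_n\to u$ strongly in $X$. The main obstacle is the atom-elimination step: the Kirchhoff factor $M_0$ couples multiplicatively with the $\mu_i$-contribution, so the CCP inequality \eqref{T.ccp.nu_mu} must be inverted in a sharp way to extract a lower bound on $\nu_i$ whose exponents match the structure of \eqref{Le.PS1.c} exactly --- this is precisely where the constants $n_1,n_2,\tau_1,\tau_2$ appearing in the statement originate.
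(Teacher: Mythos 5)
Your proposal tracks the paper's proof for most of the way: the boundedness argument via $\Phi_\lambda(u_n)-\frac{1}{r_1^-}\langle\Phi_\lambda'(u_n),u_n\rangle$ (discarding the nonnegative critical contribution, absorbing $|u_n|^{p(x)}$ through \eqref{mu1} and $\lambda<\lambda_1$), the application of Theorem~\ref{Theo.ccp}, the test with $\phi_{i,\epsilon}u_n$ yielding $m_0\mu_i\leq\nu_i$, and the final $(\textup{S}_+)$ step are all exactly the paper's route. The gap is in the concluding contradiction. You claim that retaining the atom upgrades the energy estimate to $c\geq K_0\,\nu_i-K\max\{\lambda^{\ell^+/(\ell^+-1)},\lambda^{\ell^-/(\ell^--1)}\}$ and that the algebra gives $\nu_i\geq\min\{S^{n_1},S^{n_2}\}\min\{m_0^{\tau_1},m_0^{\tau_2}\}$. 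Neither assertion holds. In the combination $\Phi_\lambda-\frac{1}{r_1^-}\langle\Phi_\lambda',\cdot\rangle$ the critical terms enter with density $\bigl(\frac{1}{r_1^-}-\frac{1}{r_1(x)}\bigr)c_1|u_n|^{r_1(x)}+\bigl(\frac{1}{r_1^-}-\frac{1}{r_2(x)}\bigr)c_2a^{\frac{r_2}{q}}|u_n|^{r_2(x)}$, whose first coefficient can vanish at $x_i$ (e.g.\ constant exponents with $r_1\equiv p^*$), so no fixed positive multiple of $\nu_i$ --- and certainly not $K_0\nu_i$, since $K_0$ is the weight attached to the gradient term --- survives from the critical part. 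Nor can you pass from the retained gradient atom to $\nu_i$: the relation $m_0\mu_i\leq\nu_i$ bounds $\mu_i$ \emph{above} by $\nu_i/m_0$, which is the wrong direction for replacing $K_0\mu_i$ by $K_0\nu_i$ from below.

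Moreover, your lower bound for $\nu_i$ has the wrong exponents: inserting $\mu_i\leq\nu_i/m_0$ into \eqref{T.ccp.nu_mu} gives $S\,m_0^{1/\xi_i}\leq\nu_i^{\frac{\eta_i-\xi_i}{\xi_i\eta_i}}$, i.e.\ $\nu_i\geq S^{\frac{\xi_i\eta_i}{\eta_i-\xi_i}}m_0^{\frac{\eta_i}{\eta_i-\xi_i}}$ with $\xi_i\in\{p(x_i),q(x_i)\}$, $\eta_i\in\{p^*(x_i),q^*(x_i)\}$; the $m_0$-exponent is $\frac{\eta_i}{\eta_i-\xi_i}=\frac{\xi_i}{\eta_i-\xi_i}+1$, not $\tau_1,\tau_2$, and when $m_0<1$ this is strictly weaker than what the stated threshold \eqref{Le.PS1.c} requires. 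The paper instead substitutes $\nu_i\geq m_0\mu_i$ into the \emph{left} side of \eqref{T.ccp.nu_mu}, obtaining $\mu_i\geq\min\{S^{n_1},S^{n_2}\}\min\{m_0^{\tau_1},m_0^{\tau_2}\}$, and then retains $K_0\mu_i$ in the limit of $\Phi_\lambda(u_n)-\frac{1}{r_1^-}\langle\Phi_\lambda'(u_n),u_n\rangle$ through the weak-$\ast$ convergence of $\mathcal{H}(\cdot,|\nabla u_n|)$ (the $\lambda|u|^{\sigma(x)}$ term being controlled by H\"older/minimization to produce the $-K\max\{\lambda^{\ell^+/(\ell^+-1)},\lambda^{\ell^-/(\ell^--1)}\}$ correction), which contradicts \eqref{Le.PS1.c} with exactly the stated constants. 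Redo your last step with the atom of $\mu$, not of $\nu$.
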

%=================PROOF OF LEMMA 4.8. (PS) CONDITION=================%
\begin{proof}
Let $\lambda \in (0,\lambda_1)$ and let $\{u_n\}_{n\in\N}$ be a $\textup{(PS)}_c$ sequence in $X$ for $\Phi_\lambda$ with $c$ satisfying \eqref{Le.PS1.c}, namely,
\begin{equation}\label{Le.PS1.PS-seq}
	\Phi_\lambda(u_n)\to c\ \ \text{and}\ \ \Phi_\lambda'(u_n)\to 0.
\end{equation}	 
We verify the boundedness of $\{u_n\}_{n\in\N}$ in $X$. To this end, it suffices to argue with $n$ large. From \eqref{Est.M0}, \eqref{Est.hat.M0} and \eqref{Le.PS1.PS-seq}, we have 
\begin{align*}
\notag
c+1+\|u_n\|& \geq \Phi_\lambda(u_n)-\frac{1}{r_1^-}\scal{\Phi^\prime_\lambda(u_n),u_n}\\ \notag
&\geq m_0\int_\Omega \mathcal{A}(x,\nabla u) \diff x-\frac{M(t_0)}{r_1^-}\int_\Omega \mathcal{H}(x,|\nabla u_n|)\diff x-\frac{\lambda}{r_1^-}\int_\Omega \left[r_1^-F(x,u_n)-f(x,u_n)u_n\right]\diff x \notag\\
&\geq \round{\frac{m_0}{q^+}-\frac{M(t_0)}{r_1^-}}\int_\Omega \mathcal{H}(x,|\nabla u_n|)\diff x-\frac{\lambda}{r_1^-}\int_\Omega \left[r_1^-F(x,u_n)-f(x,u_n)u_n\right]\diff x. 
\end{align*}
Then, utilizing $(\mathcal{F}_2)$, \eqref{mu1}, \eqref{K0} and \eqref{lambda1}  we deduce from the last estimate that
\begin{align}\label{PL.PS1.E1}
	\notag
	c+1+\|u_n\|&\geq 4K_0\int_\Omega \mathcal{H}(x,|\nabla u_n|)\diff x-\frac{\lambda}{r_1^-}\int_\Omega \left[C_4|u_n|^{\sigma(x)}+C_5|u_n|^{p(x)}\right] \notag\\
		&\geq 4K_0\int_\Omega \mathcal{H}(x,|\nabla u_n|)\diff x-\frac{\lambda}{r_1^-}\int_\Omega \left[(C_4+C_5)|u_n|^{p(x)}+C_4\right] \notag\\ 
	&\geq \round{4K_0-\frac{\lambda (C_4+C_5)}{\kappa_1r_1^-}}\int_\Omega \mathcal{H}(x,|\nabla u_n|)\diff x-\frac{\lambda C_4|\Omega|}{r_1^-}\notag\\
	&\geq 3K_0\int_\Omega \mathcal{H}(x,|\nabla u_n|)\diff x-\frac{\lambda C_4|\Omega|}{r_1^-}.
\end{align}
Invoking Proposition~\ref{prop.nor-mod.D}, we deduce from \eqref{PL.PS1.E1} that
$$c+1+\|u_n\| \geq 3K_0\left(\|u_n\|^{p^-}-1\right)-\frac{\lambda C_4|\Omega|}{r_1^-},$$
which leads to the boundedness of $\{u_n\}_{n\in\N}$ in $X$ since $p^->1$.  Then by Theorem~\ref{Theo.ccp}, up to a subsequence, we have
\begin{eqnarray}
	u_n(x) &\to& u(x)  \quad \text{a.a.} \ \ x\in\Omega,\label{PL.PS1.a.e}\\
	u_n &\rightharpoonup& u  \quad \text{in} \  X,\label{PL.PS1.w-conv}\\
	\mathcal{H}(\cdot,|\nabla u_n|)\ &\overset{\ast }{\rightharpoonup }&\mu \geq	\mathcal{H}(\cdot,|\nabla u|) + \sum_{i\in I} \mu_i \delta_{x_i} \ \text{in}\  \mathcal{M}(\overline{\Omega}),\label{PL.PS1.mu}\\
	\mathcal{B}(\cdot,u_n)&\overset{\ast }{\rightharpoonup }&\nu=\mathcal{B}(\cdot,u) + \sum_{i\in I}\nu_i\delta_{x_i} \ \text{in}\ \mathcal{M}(\overline{\Omega}),\label{PL.PS1.nu}\\
	S \min\left\{\nu_i^{\frac{1}{p^\ast(x_i)}},\nu_i^{\frac{1}{q^\ast(x_i)}}\right\} &\leq& \max\left\{\mu_i^{\frac{1}{p(x_i)}},\mu_i^{\frac{1}{q(x_i)}}\right\} , \ \forall i\in I.\label{PL.PS1.mu-nu}
\end{eqnarray}	
We claim that $I = \emptyset.$ Assume on the contrary that there exists $i\in I$. Let $\epsilon>0$ and define $\phi_{i,\epsilon}$ as in the proof of Theorem~\ref{Theo.ccp}. Since $\phi_{i,\epsilon}u_n \in X$, we get
\begin{multline}\label{Deri.J}	
M_0\round{\int_\Omega\mathcal{A}(x,\nabla u_n)\diff x}\int_\Omega \phi_{i,\epsilon}\mathcal{H}(x,|\nabla u_n|)\diff x
=\langle \Phi_\lambda'(u_n) ,\phi_{i,\epsilon}u_n \rangle+\lambda\int_{\Omega}\phi_{i,\epsilon}f(x,u_n)u_n\diff x\\
+\int_\Omega\phi_{i,\epsilon}\mathcal{B}(x,u_n)\diff x -M_0\round{\int_\Omega \mathcal{A}(x,\nabla u_n)\diff x}\int_\Omega A(x,\nabla u_n)\cdot \nabla \phi_{i,\epsilon}u_n \diff x.	
\end{multline}
Let $\delta>0$ be arbitrary. Applying \eqref{young}, we have
\begin{align*}\label{nablaPhi.Young}
	\int_\Omega \big|A(x,\nabla u_n)\cdot \nabla \phi_{i,\epsilon}u_n\big| \diff x \leq& \int_\Omega |\nabla u_n|^{p(x)-1}|\nabla \phi_{i,\epsilon}||u_n|\diff x  +\int_\Omega a(x)|\nabla u_n|^{q(x)-1}|\nabla \phi_{i,\epsilon}||u_n|\diff x \notag\\
	\leq &  \delta \int_\Omega \mathcal{H}(x,|\nabla u_n|)\diff x+C_\delta \int_\Omega \mathcal{H}(x,|\nabla \phi_{i,\epsilon}u_n|)\diff x\notag\\
		\leq &  C_\star\delta +C_\delta \int_\Omega \mathcal{H}(x,|\nabla \phi_{i,\epsilon}u_n|)\diff x, 
\end{align*}	
where $C_\delta$ is a positive constant depending only on the data and $\delta$ while
\begin{equation*}
	C_\star:=\sup_{n\in \mathbb{N}}\int_\Omega \mathcal{H}(x,|\nabla u_n|)\diff x<\infty.
\end{equation*}
Combining this with \eqref{Deri.J} and taking into account \eqref{Est.M0} we obtain
\begin{multline}\label{Deri.J'}	
	m_0\int_\Omega \phi_{i,\epsilon}\mathcal{H}(x,|\nabla u_n|)\diff x
	\leq \langle \Phi_\lambda'(u_n) ,\phi_{i,\epsilon}u_n \rangle+\lambda\int_{\Omega}\phi_{i,\epsilon}f(x,u_n)u_n\diff x\\
	+\int_\Omega\phi_{i,\epsilon}\mathcal{B}(x,u_n)\diff x +C_\star M(t_0)\delta+M(t_0)C_\delta\int_\Omega \mathcal{H}(x,|\nabla \phi_{i,\epsilon}u_n|)\diff x.	
\end{multline}
Note that $\{\phi_{i,\epsilon}u_n\}_{n\in\N}$ is bounded in $X$. From this and \eqref{Le.PS1.PS-seq} we obtain
\begin{equation}\label{Deri.J'1}	
	\lim_{n\to\infty}\langle \Phi_\lambda'(u_n) ,\phi_{i,\epsilon}u_n \rangle=0.
\end{equation}
By Proposition~\ref{prop_S-C-E}, it follows from \eqref{PL.PS1.w-conv} that $u_n\to u$ in $L^{\mathcal{H}}(\Omega)$. From this we easily obtain 
\begin{equation}\label{Deri.J'2}	
	\lim_{n\to\infty}\int_\Omega \mathcal{H}(x,|\nabla \phi_{i,\epsilon}u_n|)\diff x=\int_\Omega \mathcal{H}(x,|\nabla \phi_{i,\epsilon}u|)\diff x.
\end{equation}
On the other hand, the assumption $(\mathcal{F}_0)$ and Proposition~\ref{prop_embs} imply that
\begin{equation}\label{Deri.J'3}	
\lim_{n\to\infty}\int_\Omega \phi_{i,\epsilon} f(x,u_n)u_n \diff x= \int_\Omega \phi_{i,\epsilon} f(x,u)u \diff x.
\end{equation}
Passing to the limit as $n \to \infty$ in \eqref{Deri.J'}, taking into account \eqref{PL.PS1.mu}, \eqref{PL.PS1.nu} and   \eqref{Deri.J'1}-\eqref{Deri.J'3},  we obtain
\begin{equation}\label{PL.PS1.I}
	m_0\int_{\overline{\Omega}}  \phi_{i,\epsilon} \diff \mu \leq \lambda \int_{\Omega} \phi_{i,\epsilon} f(x,u)u \diff x + \int_{\overline{\Omega}} \phi_{i,\epsilon} d\nu +C_\star M(t_0)\delta+M(t_0)C_\delta\int_\Omega \mathcal{H}(x,|\nabla \phi_{i,\epsilon}u|)\diff x.
\end{equation}	
By the integrability of $f(\cdot,u)u$ and arguing as those leading to \eqref{est.critical.2} we obtain
\begin{equation}\label{PL.PS1.I1}
	\lim_{\epsilon\to 0^+}\int_{\Omega} \phi_{i,\epsilon} f(x,u)u \diff x =\lim_{\epsilon\to 0^+}\int_\Omega \mathcal{H}(x,|\nabla \phi_{i,\epsilon}u|)\diff x=0.
\end{equation}
Then, passing to the limit as $\epsilon \to 0^+$ in \eqref{PL.PS1.I} and utilizing \eqref{PL.PS1.I1} we arrive at
\begin{equation*}
	m_0 \mu_i \leq \nu_i + C_\star M(t_0)\delta.
\end{equation*}
Since $\delta$ was chosen arbitrarily, the last estimate yields
\begin{equation}\label{PL.PS1.Re.mu-nu}
	m_0 \mu_i  \leq \nu_i.
\end{equation}
From \eqref{PL.PS1.mu-nu} and \eqref{PL.PS1.Re.mu-nu}, we deduce
\begin{equation*}
		S \min \left\{(m_0\mu_i)^{\frac{1}{p^\ast(x_i)}},(m_0\mu_i)^{\frac{1}{q^\ast(x_i)}}\right\}\leq \max \left\{\mu_i^{\frac{1}{p(x_i)}},\mu_i^{\frac{1}{q(x_i)}}\right\}.
\end{equation*}
Hence,
\begin{equation*}
	S (m_0\mu_i)^{\frac{1}{\eta_i}}\leq \mu_i^{\frac{1}{\xi_i}},
\end{equation*}
where $\eta_i\in \{p^\ast(x_i),q^\ast(x_i)\}$ and $\xi_i\in\{p(x_i),q(x_i)\}$. This infers
\begin{equation}\label{PL.PS1.mu_i.2}
	S^{\frac{\eta_i\xi_i}{\eta_i-\xi_i}} m_0^{\frac{\xi_i}{\eta_i-\xi_i}}\leq \mu_i.
\end{equation}
Note that 
\begin{equation*}
	\left(\frac{pq^*}{q^*-p}\right)^-\leq \frac{p(x_i)q^*(x_i)}{q^*(x_i)-p(x_i)}\leq \frac{\eta_i\xi_i}{\eta_i-\xi_i}\leq \frac{q(x_i)p^*(x_i)}{p^*(x_i)-q(x_i)}\leq \left(\frac{qp^*}{p^*-q}\right)^+
\end{equation*}
and
\begin{equation*}
	\left(\frac{p}{q^*-p}\right)^-\leq\frac{p(x_i)}{q^*(x_i)-p(x_i)}\leq \frac{\xi_i}{\eta_i-\xi_i}\leq \frac{q(x_i)}{p^*(x_i)-q(x_i)}\leq \left(\frac{q}{p^*-q}\right)^+.
\end{equation*}
Combining these estimates with \eqref{PL.PS1.mu_i.2} gives
\begin{equation}\label{PL.PS1.mu_i.3}
	\min\{S^{n_1},S^{n_2}\}\min\{m_0^{\tau_1},m_0^{\tau_2}\}\leq \mu_i,
\end{equation}
where $n_1:=\left(\frac{pq^*}{q^*-p}\right)^-$, $n_2:=\left(\frac{qp^*}{p^*-q}\right)^+$,  $\tau_1:=\left(\frac{p}{q^*-p}\right)^-$ and $\tau_2:=\left(\frac{q}{p^*-q}\right)^+$.

On the other hand, using \eqref{mu1} and \eqref{Le.PS1.PS-seq} again and repeating the argument leading to  \eqref{PL.PS1.E1} we have
\begin{align*}
	c+o_n(1)=&\Phi_\lambda(u_n)-\frac{1}{r_1^-}\left\langle\Phi_\lambda'(u_n) ,u_n\right\rangle\\ 
	\geq& 4K_0\int_\Omega \mathcal{H}(x,|\nabla u_n|) \diff x-\frac{\lambda }{r_1^-}\int_{\Omega}\big[C_4|u_n|^{\sigma(x)}+C_5|u_n|^{p(x)}\big]\diff x\\ 
	\geq& K_0\int_\Omega \mathcal{H}(x,|\nabla u_n|)\diff x+K_0\kappa_1\int_\Omega |u_n|^{p(x)}\diff x \\ 
	&+\left(K_0\kappa_1-\frac{\lambda C_5}{r_1^-}\right)\int_\Omega |u_n|^{p(x)} \diff x
	-\frac{\lambda C_4 }{r_1^-}\int_{\Omega} |u_n|^{\sigma(x)}\diff x\\
	\geq& K_0\int_\Omega \mathcal{H}(x,|\nabla u_n|)\diff x+K_0\kappa_1\int_\Omega |u_n|^{p(x)}\diff x 
	-\frac{\lambda C_4 }{r_1^-}\int_{\Omega} |u_n|^{\sigma(x)}\diff x.
\end{align*}
Passing to the limit as $n \to \infty$ in the last inequality, utilizing  \eqref{PL.PS1.mu}, Proposition~\ref{prop_S-C-E}, we have
\begin{equation*}
	c\geq K_0\mu_i+K_0\kappa_1\int_\Omega |u|^{p(x)}-\frac{\lambda C_4}{r_1^-}\int_\Omega |u|^{\sigma(x)}\diff x.
\end{equation*}	
By Proposition~\ref{prop.Holder}, we have
$$\int_{\Omega}|u|^{\sigma(x)}\diff x\leq 2\|1\|_{\frac{\ell(\cdot)}{\ell(\cdot)-1}}\big\||u|^{\sigma(\cdot)}\big\|_{\ell(\cdot)},$$
where $\ell(\cdot):=\frac{p(\cdot)}{\sigma(\cdot)}$. 
Then, it follows from the last two estimates that 
\begin{equation}\label{PL.PS1.c}
	c\geq\,  
	K_0\mu_i-\lambda b \big\||u|^{\sigma(\cdot)}\big\|_{\ell(\cdot)}+a \int_{\Omega}|u|^{p(x)}\diff x,
\end{equation}	
where $a:=	K_0\kappa_1$ and $b:= \frac{2C_4}{r_1^-}\|1\|_{\frac{\ell(\cdot)}{\ell(\cdot)-1}}$. By  Proposition~\ref{prop.nor-mod.D} we deduce from \eqref{PL.PS1.c} that
\begin{equation}\label{PL.PS1.c1}
	c\geq \min\left\{h_+\left(t_u\right),h_-\left(t_u\right) \right\},
\end{equation}
where   $t_u:=\big\||u|^{\sigma(\cdot)}\big\|_{\ell(\cdot)}$ and for $*\in\{+,-\}$, 
$$h_*(t):=K_0\mu_i-\lambda b t+a t^{\ell^*} \ \ \text{for}\ \ t\geq 0.$$
Note that for $*\in\{+,-\}$, it holds that
\begin{align*}
h_*\left(t_u\right)\geq \min_{t\geq 0}h_*(t)=h_*\left(\round{\frac{b\lambda}{a\ell^*}}^{\frac{1}{\ell^*-1}}\right)=K_0\mu_i-(\ell^*)^{-\frac{\ell^*}{\ell^*-1}} (\ell^*-1)a^{-\frac{1}{\ell^*-1}}b^{\frac{\ell^*}{\ell^*-1}}\lambda^{\frac{\ell^*}{\ell^*-1}}.
\end{align*}
Combining this with \eqref{PL.PS1.c1} gives
\begin{equation}\label{PL.PS1.c2}
	c\geq K_0\mu_i-K\max\left\{\lambda^{\frac{\ell^+}{\ell^+-1}},\lambda^{\frac{\ell^-}{\ell^--1}}\right\}.
\end{equation}
where
\begin{equation}\label{K}
	K:=\max_{*\in \{{+,-}\}}(\ell^*)^{-\frac{\ell^*}{\ell^*-1}} (\ell^*-1)a^{-\frac{1}{\ell^*-1}}b^{\frac{\ell^*}{\ell^*-1}}.
\end{equation}
Then, by taking into account \eqref{PL.PS1.mu_i.3} we derive from \eqref{PL.PS1.c2} that
$$c\geq K_0\min\{S^{n_1},S^{n_2}\}\min\{m_0^{\tau_1},m_0^{\tau_2}\}-K\max\left\{\lambda^{\frac{\ell^+}{\ell^+-1}},\lambda^{\frac{\ell^-}{\ell^--1}}\right\},$$
a contradiction with \eqref{Le.PS1.c}. That is, we have shown that  $I=\emptyset$, and thus, $\int_\Omega \mathcal{B}(x,u_n)\diff x \to \int_\Omega \mathcal{B}(x,u)\diff x$ as $n\to\infty$ by virtue of \eqref{PL.PS1.nu}. From this and \eqref{PL.PS1.a.e}, by invoking  Lemma~\ref{L.brezis-lieb} we obtain
 $$\int_\Omega \mathcal{B}(x,u_n-u)\diff x \to 0.$$
Equivalently, we derive
\begin{equation} \label{un.conv.B}	
u_n\to u\ \ \text{in}\ \ L^{\mathcal{B}}(\Omega)
	\end{equation}
in view of Proposition~\ref{prop.nor-mod.D}. Moreover, from \eqref{Diff1} we have
\begin{align}\label{PS1.S+.1}
\notag	m_0\bigg|\int_\Omega &A(x,\nabla u_n) \cdot \nabla(u_n-u)\diff x\bigg|\\
\notag	\leq& M_0\left(\int_\Omega\mathcal{A}(x,\nabla u)\,\diff x\right)\bigg|\int_\Omega A(x,\nabla u_n) \cdot \nabla(u_n-u)\diff x\bigg|\\
	\notag	\leq& \left|\langle \Phi_\lambda' (u_n),u_n-u\rangle\right|+\lambda\int_\Omega \left|f(x,u_n)(u_n-u)\right|\diff x\\
	&\quad\quad+\int_\Omega \left[c_1(x)|u_n|^{r_1(x)-1}|u_n-u|+c_2(x)a(x)^{\frac{r_2(x)}{q(x)}}|u_n|^{r_2(x)-1}|u_n-u|\right]\,\diff x.
\end{align}
By \eqref{Le.PS1.PS-seq} and the boundedness of $\{u_n\}_{n\in\N}$ in $X$ we obtain
\begin{equation}\label{PS1.S+.2}
	\lim_{n\to\infty}\left|\langle \Phi_\lambda' (u_n),u_n-u\rangle\right|=0.
\end{equation}
On the other hand, by means of $(\mathcal{F}_0)$ and Proposition~\ref{prop.Holder} we deduce 
\begin{align*}
\int_\Omega \big|f(x,u_n)(u_n-u)\big|\,\diff x
 &\leq C_1\int_\Omega\round{1+|u_n|^{\alpha(x)-1}}|u_n-u|\,\diff x \notag\\
&\leq C_1\|u_n-u\|_1+2C_1\big\||u_n|^{\alpha(\cdot)-1}\big\|_{\frac{\alpha(\cdot)}{\alpha(\cdot)-1}}\|u_n-u\|_{\alpha(\cdot)}.
\end{align*}
From this and \eqref{PL.PS1.w-conv}, by virtue of Proposition~ \ref{prop_embs} we obtain
\begin{equation}\label{PS1.S+.3}
	\lim_{n\to\infty}\int_\Omega \big|f(x,u_n)(u_n-u)\big|\,\diff x=0.
\end{equation}
Also, by applying Proposition~\ref{prop.Holder} we have
\begin{align*}
	\notag
	\int_\Omega c_1(x)|u_n|^{r_1(x)-1}|u_n-u|\,\diff x\leq 2\big\||u_n|^{r_1(\cdot)-1}\big\|_{L^{\frac{r_1(\cdot)}{r_1(\cdot)-1}}\big(c_1,\Omega\big)}\|u_n-u\|_{L^{r_1(\cdot)}\big(c_1,\Omega\big)}
\end{align*}
and
\begin{align*}
		\int_\Omega c_2(x)a(x)^{\frac{r_2(x)}{q(x)}}|u_n|^{r_2(x)-1}|u_n-u|\,\diff x\leq 2\big\||u_n|^{r_2(\cdot)-1}\big\|_{L^{\frac{r_2(\cdot)}{r_2(\cdot)-1}}\big(a^{\frac{r_2}{q}},\Omega\big)}\|u_n-u\|_{L^{r_2(\cdot)}\big(a^{\frac{r_2}{q}},\Omega\big)}.
\end{align*}
From the last two estimates and \eqref{un.conv.B} we arrive at
\begin{equation}\label{PS1.S+.4}
	\lim_{n\to\infty}\int_\Omega \left[c_1(x)|u_n|^{r_1(x)-1}+c_2(x)a(x)^{\frac{r_2(x)}{q(x)}}|u_n|^{r_2(x)-1}\right]|u_n-u|\,\diff x=0.
\end{equation}
By exploiting \eqref{PS1.S+.2}-\eqref{PS1.S+.4}, we derive from \eqref{PS1.S+.1} that
\begin{equation*}
	\lim_{n\to\infty}\int_\Omega A(x,\nabla u_n) \cdot \nabla(u_n-u)\diff x=0.
\end{equation*}
This fact implies that $u_n\to u$ in $X$ %by the $(\textup{S}_+)$-property of the operator $A$, see 
in view of \cite[Theorem 3.3]{CGHW}, and the proof is complete.
\end{proof}
By Propositions~\ref{prop_embs} and~\ref{prop_S-C-E},  we can take a constant $C_8>1$ such that
\begin{equation}\label{norms-norm}
	\max\left\{\|u\|_{\sigma(\cdot)},\|u\|_{\mathcal{B}}\right\}\leq C_8\|u\|,\quad \forall \, u\in X.
\end{equation}
Set 
\begin{equation}\label{lambda2-k0}
	\lambda_2:=\frac{m_0 \kappa_1}{2q^+C_3},
\end{equation}
where $C_3$ is given by $(\mathcal{F}_2)$. Let $\lambda\in(0,\lambda_2)$. By $(\mathcal{F}_2)$, \eqref{mu1} and \eqref{Est.hat.M0}, we have
\begin{align*}
	\notag \Phi_\lambda(u)\geq& \frac{m_0}{q^+}\int_\Omega \mathcal{H}(x,|\nabla u|)\diff x-\lambda C_2\int_\Omega |u|^{\sigma(x)}\diff x - \lambda C_3\int_\Omega |u|^{p(x)}\diff x - \frac{1}{r_1^-}\int_\Omega \mathcal{B}(x,u) \diff x\\
	\geq& \frac{m_0}{2q^+}\int_\Omega \mathcal{H}(x,|\nabla u|) \diff x + \round{\frac{m_0\kappa_1}{2q^+} - \lambda C_3}\int_\Omega |u|^{p(x)}\diff x-\lambda C_2\int_\Omega |u|^{\sigma(x)}\diff x\\
	&-\frac{1}{r_1^-}\int_\Omega \mathcal{B}(x,u) \diff x,~ \forall u\in X.
\end{align*}
By means of Proposition~\ref{prop.nor-mod.D} and \eqref{norms-norm}, we deduce from the last inequality that for all $u\in X$ with $\|u\|\leq 1$,
\begin{align*}
	\notag\Phi_\lambda(u)&\geq \frac{m_0}{2q^+}\min\left\{\|u\|^{p^-},\|u\|^{q^+}\right\}-\lambda C_2\max\left\{|u|_{\sigma(\cdot)}^{\sigma^+},|u|_{\sigma(\cdot)}^{\sigma^-}\right\} 
	-\frac{1}{r_1^-}\max\left\{\|u\|_{\mathcal{B}}^{r_1^-},\|u\|_{\mathcal{B}}^{r_2^+}\right\}\\
	\notag&\geq \frac{m_0}{2q^+}\min\left\{\|u\|^{p^-},\|u\|^{q^+}\right\}-\lambda C_2 C_8^{\sigma^+}\max\left\{\|u\|^{\sigma^-},\|u\|^{\sigma^+}\right\}-\frac{C_8^{r_2^+}}{r_1^-}\max\left\{\|u\|^{r_1^-},\|u\|^{r_2^+}\right\}\\
	&\geq \frac{m_0}{2q^+}\|u\|^{q^+}-\lambda C_2 C_8^{\sigma^+}\|u\|^{\sigma^-}-\frac{C_8^{r_2^+}}{r_1^-}\|u\|^{r_1^-}. 
\end{align*}
That is,
\begin{equation}\label{PTcc.gi}
	\Phi_\lambda(u)\geq  g_\lambda(\|u\|)\ \ \text{for} \ \ \|u\|\leq1,
\end{equation} 
where $g_\lambda\in C[0,+\infty)$ is given by
$$g_\lambda(t):=\frac{m_0}{2q^+}t^{q^+}-\lambda C_2 C_8^{\sigma^+}t^{\sigma^-}-\frac{C_8^{r_2^+}}{r_1^-}t^{r_1^-},\quad t\ge 0.$$
In order to analyze the behavior of $g_\lambda$, rewrite
$$g_\lambda(t)=C_2 C_8^{\sigma^+}t^{\sigma^-}\left(h(t)-\lambda\right),$$
where
$$h(t):=a_0t^{q^+-\sigma^-}-b_0t^{r_1^--\sigma^-},$$ 
with $a_0:=m_0(2q^+C_2C_8^{\sigma^+})^{-1}>0$ and $b_0:=C_8^{r_2^+-\sigma^+}(C_2r_1^-)^{-1}>0$. Clearly, 
\begin{align}\label{lambda3}
	\notag\lambda_3:&=\max_{t\geq 0}\, h(t)=h\left(\left[\frac{(q^+-\sigma^-)a_0}{(r_1^--\sigma^-)b_0}\right]^{\frac{1}{r_1^--q^+}}\right)\\
	&=a_0^{\frac{r_1^--\sigma^-}{r_1^--q^+}}b_0^{\frac{\sigma^--q^+}{r_1^--q^+}}\left(\frac{q^+-\sigma^-}{r_1^--\sigma^-}\right)^{\frac{q^+-\sigma^-}{r_1^--q^+}}\frac{r_1^--q^+}{r_1^--\sigma^-}>0.
\end{align}
Clearly, for any $\lambda\in (0,\lambda_3)$, $g_\lambda(t)$ has exactly two positive roots $t_1(\lambda)$ and $t_2(\lambda)$ with 
\begin{equation}\label{t*}
	0<t_1(\lambda)<\left[\frac{(q^+-\sigma^-)a_0}{(r_1^--\sigma^-)b_0}\right]^{\frac{1}{r_1^--q^+}}=:t_*<t_2(\lambda).
\end{equation}
Moreover, it holds that
\begin{equation}\label{g-la}
	g_\lambda(t)\begin{cases}
		<0,\quad t\in (0,t_1(\lambda))\cup (t_2(\lambda),+\infty),\\
		>0,\quad t\in (t_1(\lambda),t_2(\lambda))
	\end{cases}
\end{equation}
and
\begin{equation}\label{lim.t1}
	\lim_{\lambda\to 0^+}t_1(\lambda)=0.
\end{equation}
By \eqref{lim.t1}, we find $\lambda_4>0$ such that
\begin{equation}\label{t_1}
	t_1(\lambda)<\min\left\{(2q^+)^{-1/p^-},\left((2q^+)^{-1}t_*^{q^+}\right)^{1/p^-},t_0^{1/p^-}\right\},\quad \forall \lambda\in (0,\lambda_4),
\end{equation}
where $t_0$ and $t_*$ are given in \eqref{t0-sub} and \eqref{t*}, respectively.
Set 
\begin{equation}\label{la*1}
	\lambda_{\ast,1}:=\min_{i\in \{1,2,3,4\}}\lambda_i
\end{equation} 
with $\lambda_1$, $\lambda_2$, $\lambda_3$ and $\lambda_4$ given in \eqref{lambda1}, \eqref{lambda2-k0}, \eqref{lambda3} and \eqref{t_1}, respectively. For each $\lambda\in \left(0,\lambda_{\ast,1}\right)$, define a truncated functional $T_\lambda: X\to\R$ as
\begin{align*}\label{T_lambda}
	\notag T_\lambda(u):=&\widehat{M}_0\left(\int_\Omega \mathcal{A}(x,\nabla u) \diff x\right)-\phi\left(\int_\Omega \mathcal{A}(x,\nabla u) \diff x\right)\left[\lambda\int_\Omega F(x,u) 
	\diff x+\int_\Omega \widehat{B}(x,u) \diff x\right],\quad u\in X,
\end{align*}
where $\phi\in C_c^\infty(\R)$ satisfies $0\leq \phi(\cdot)\leq 1$, $\phi(t)=1$ for $|t|\leq t_1(\lambda)^{p^-}$ and $\phi(t)=0$ for $|t|\geq 2t_1(\lambda)^{p^-}$. Clearly, $T_\lambda\in C^1(X,\R)$ and it holds that
\begin{equation}\label{T_lambda.Est1}
	T_\lambda(u)\geq \Phi_\lambda(u),\quad \forall u\in X,
\end{equation}
\begin{equation}\label{T_lambda.Est2}
	T_\lambda(u)=\Phi_\lambda(u) \ \ \text{for any} \ \ u\in X \ \ \text{with} \ \ \int_\Omega \mathcal{A}(x,\nabla u) \diff x<t_1(\lambda)^{p^-},
\end{equation}
and
\begin{equation}\label{T_lambda.Est3}
	T_\lambda(u)=\widehat{M}_0\left(\int_\Omega \mathcal{A}(x,\nabla u) \diff x\right)\ \ \text{for any} \ \ u\in X \ \ \text{with} \ \ \int_\Omega \mathcal{A}(x,\nabla u) \diff x>2t_1(\lambda)^{p^-}.
\end{equation}

\begin{lemma}\label{T_lambda(u)<0}
	Let $\lambda\in \left(0,\lambda_{\ast,1}\right)$. Then, for $u\in X$ with $T_\lambda(u)<0$, it holds that $\int_\Omega \mathcal{A}(x,\nabla u) \diff x<t_1(\lambda)^{p^-}$; in particular, $T_\lambda(u)=\Phi_\lambda(u)$ and $T_\lambda'(u)=\Phi_\lambda'(u)$.
\end{lemma}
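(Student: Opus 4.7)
The plan is to argue by contradiction: assume $T_\lambda(u)<0$ and rule out the two possibilities $\int_\Omega\mathcal{A}(x,\nabla u)\,\diff x\geq 2t_1(\lambda)^{p^-}$ and $t_1(\lambda)^{p^-}\leq \int_\Omega\mathcal{A}(x,\nabla u)\,\diff x<2t_1(\lambda)^{p^-}$, thereby forcing $\int_\Omega\mathcal{A}(x,\nabla u)\,\diff x<t_1(\lambda)^{p^-}$. The first possibility is immediate: if $\int_\Omega\mathcal{A}(x,\nabla u)\,\diff x\geq 2t_1(\lambda)^{p^-}$, then $\phi$ vanishes at $\int_\Omega\mathcal{A}(x,\nabla u)\,\diff x$, so by \eqref{T_lambda.Est3} together with \eqref{Est.hat.M0}, one has $T_\lambda(u)=\widehat{M}_0(\int_\Omega\mathcal{A}(x,\nabla u)\,\diff x)\geq 2m_0 t_1(\lambda)^{p^-}>0$, contradicting $T_\lambda(u)<0$.

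The intermediate case is the main obstacle. The idea is to convert the bound $\int_\Omega\mathcal{A}(x,\nabla u)\,\diff x<2t_1(\lambda)^{p^-}$ into sharp bounds on $\|u\|$ using the elementary inequalities $\frac{1}{q^+}\mathcal{H}(x,|\xi|)\le \mathcal{A}(x,\xi)\le \frac{1}{p^-}\mathcal{H}(x,|\xi|)$ and the modular–norm relations of Proposition~\ref{prop.nor-mod.D}. Concretely, if $\|u\|>1$ then $\int_\Omega\mathcal{A}(x,\nabla u)\,\diff x\geq\frac{1}{q^+}\|u\|^{p^-}>\frac{1}{q^+}$, which is incompatible with $\int_\Omega\mathcal{A}(x,\nabla u)\,\diff x<2t_1(\lambda)^{p^-}$ by the first bound in \eqref{t_1}; hence $\|u\|\leq 1$. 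Then the two-sided estimate $\frac{1}{q^+}\|u\|^{q^+}\leq\int_\Omega\mathcal{A}(x,\nabla u)\,\diff x\leq\frac{1}{p^-}\|u\|^{p^-}$ together with \eqref{t_1} yields
\begin{equation*}
t_1(\lambda)\;\le\;(p^-)^{1/p^-}t_1(\lambda)\;\le\;\|u\|\;\le\;(2q^+)^{1/q^+}t_1(\lambda)^{p^-/q^+}\;<\;t_\ast\;<\;t_2(\lambda),
\end{equation*}
so $\|u\|\in[t_1(\lambda),t_2(\lambda)]$.

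At this point I would invoke \eqref{T_lambda.Est1} (which holds because $F(x,\cdot)\geq 0$ by $(\mathcal{F}_2)(\text{i})$ and $\widehat{B}(x,\cdot)\geq 0$ by the definition of $B$, so $0\le \phi\le 1$ multiplies a nonnegative quantity) to obtain $T_\lambda(u)\geq \Phi_\lambda(u)\geq g_\lambda(\|u\|)\geq 0$ via \eqref{PTcc.gi} and \eqref{g-la}, again contradicting $T_\lambda(u)<0$. Having ruled out both possibilities, we conclude that $\int_\Omega\mathcal{A}(x,\nabla u)\,\diff x<t_1(\lambda)^{p^-}$, and then \eqref{T_lambda.Est2} gives $T_\lambda(u)=\Phi_\lambda(u)$. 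Finally, since the map $v\mapsto \int_\Omega\mathcal{A}(x,\nabla v)\,\diff x$ is continuous on $X$, the strict inequality persists in a whole neighborhood $U$ of $u$, on which $\phi(\int_\Omega\mathcal{A}(x,\nabla v)\,\diff x)\equiv 1$; therefore $T_\lambda\equiv\Phi_\lambda$ on $U$ and in particular $T_\lambda'(u)=\Phi_\lambda'(u)$, completing the proof.
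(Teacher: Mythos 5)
Your proof is correct and relies on the same ingredients as the paper's own argument --- the smallness conditions in \eqref{t_1}, the lower bound \eqref{PTcc.gi} through $g_\lambda$ and its sign structure \eqref{g-la}, the truncation properties \eqref{T_lambda.Est1}--\eqref{T_lambda.Est3} together with \eqref{Est.hat.M0}, and the modular--norm relations of Proposition~\ref{prop.nor-mod.D} --- merely organized as a trichotomy on $\int_\Omega \mathcal{A}(x,\nabla u)\diff x$ instead of the paper's route of first proving $\|u\|<1$ and then $\|u\|<t_1(\lambda)$, which is just the contrapositive use of the same sign information on $g_\lambda$. Your explicit neighborhood argument for $T_\lambda'(u)=\Phi_\lambda'(u)$ is a small addition that the paper leaves implicit; everything checks out.
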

\begin{proof}
	Let $\lambda\in \left(0,\lambda_{\ast,1}\right)$ and let $u\in X$ with $T_\lambda(u)<0$. Then, we have $\Phi_\lambda(u)<0$ due to \eqref{T_lambda.Est1}. We claim that $\|u\|< 1$. Indeed, assume on the contrary that $\|u\|\geq 1$, then $\int_\Omega \mathcal{H}(x,|\nabla u|)\diff x\geq 1$ in view of Proposition~\ref{prop.nor-mod.D}. Thus, $\int_\Omega \mathcal{A}(x,\nabla u) \diff x\geq \frac{1}{q^+}\int_\Omega \mathcal{H}(x,|\nabla u|)\diff x\geq\frac{1}{q^+}>2t_1(\lambda)^{p^-}$ due to \eqref{t_1}. By combining this with \eqref{Est.hat.M0} and \eqref{T_lambda.Est3} we derive
	\begin{equation*}
		m_0\int_\Omega \mathcal{A}(x,\nabla u) \diff x\leq \widehat{M}_0\left(\int_\Omega \mathcal{A}(x,\nabla u) \diff x\right)=T_\lambda(u)<0,
	\end{equation*}
	a contradiction. That is to say, it must be that $\|u\|<1$; hence,  $g_\lambda(\|u\|)\leq \Phi_\lambda(u)<0$ due to \eqref{PTcc.gi}. Then by \eqref{g-la}, either $\|u\|<t_1(\lambda)$ or $\|u\|>t_2(\lambda)>t_*$. The latter case implies that $\int_\Omega \mathcal{A}(x,\nabla u) \diff x\geq \frac{1}{q^+}\int_\Omega \mathcal{H}(x,|\nabla u|)\diff x\geq\frac{1}{q^+}\|u\|^{q^+}>\frac{t_*^{q^+}}{q^+}>2t_1(\lambda)^{p^-}$ in view of Proposition~\ref{prop.nor-mod.D} and \eqref{t_1}; hence, by \eqref{T_lambda.Est3} we get that $T_\lambda(u)\geq 0$, a contradiction. Thus, it must hold that $\|u\|<t_1(\lambda)$, and therefore, $\int_\Omega \mathcal{A}(x,\nabla u) \diff x\leq \frac{1}{p^-}\int_\Omega \mathcal{H}(x,|\nabla u|)\diff x\leq\frac{\|u\|^{p^-}}{p^-}<\frac{t_1(\lambda)^{p^-}}{p^-}$. The proof is complete.
\end{proof}

In order to complete the proof of Theorem~\ref{Theo.cc}, we will construct a sequence $\{c_k\}_{k\in\N}$ of negative critical values of $T_\lambda$ via the genus theory. As a result of Lemma~\ref{T_lambda(u)<0}, critical points of $T_\lambda$ corresponding to these $c_k$ are also solutions to \eqref{e1.1'} with $\theta=1$. Denote by $\Sigma$ the set of all closed subset $E\subset X\setminus\{0\}$ such that $E=-E$, namely, $u\in E$ implies $-u\in E$. For $E\in\Sigma$, denote by $\gamma(E)$ the genus of a $E$ (see \cite{AR,Rab.Mo86} for the definition and properties of the genus). For each $\tau\in\R$, define
  $$T_\lambda^{\tau}:=\{u \in X: \, T_\lambda(u) \leq \tau\}.$$
  By $(\mathcal{F}_1)$ and the definition of $T_\lambda$, it holds that $T_\lambda^{-\epsilon}\in\Sigma$ for all $\epsilon>0$, and we have the following.
\begin{lemma}\label{genus.k}
Let $\lambda\in \left(0,\lambda_{\ast,1}\right)$. Then, for each $k \in \mathbb{N}$, there exists $\epsilon>0$ such that
	$$
	\gamma(T_\lambda^{-\epsilon}) \geq k.
	$$ 
\end{lemma}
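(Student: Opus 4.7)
The plan is to produce, for each $k\in\N$, a finite–dimensional symmetric subset of $T_\lambda^{-\epsilon}$ which is homeomorphic (in an odd fashion) to the sphere $S^{k-1}$, so that the monotonicity and dimension properties of the Krasnoselskii genus yield $\gamma(T_\lambda^{-\epsilon})\ge k$. The local condition $(\mathcal{F}_2)(ii)$, together with the gap $\sigma^+<p^-$, is exactly what will force $\Phi_\lambda$ to be strictly negative on a small sphere of such a subspace.

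Fix $k\in\N$ and let $B\subset\Omega$ be the ball given by $(\mathcal{F}_2)$. Since $C_c^\infty(B)\subset X$ is infinite–dimensional, I choose a $k$–dimensional linear subspace $X_k\subset C_c^\infty(B)\subset X$. On the finite–dimensional space $X_k$ all norms are equivalent, so there exists $\tilde c_k>0$ such that $\|u\|_{L^{\sigma(\cdot)}(B)}\ge \tilde c_k\|u\|$ for every $u\in X_k$. For $t\in(0,1)$ small enough and $u\in X_k$ with $\|u\|=t$, I will have simultaneously $\|u\|_{L^{\sigma(\cdot)}(B)}<1$ and $\int_\Omega \mathcal{H}(x,|\nabla u|)\diff x\le \|u\|^{p^-}=t^{p^-}<t_1(\lambda)^{p^-}\cdot p^-$; in particular $\int_\Omega \mathcal{A}(x,\nabla u)\diff x<t_1(\lambda)^{p^-}$, so by \eqref{T_lambda.Est2} one has $T_\lambda(u)=\Phi_\lambda(u)$.

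Next I estimate $\Phi_\lambda$ from above on this sphere. Using \eqref{Est.hat.M0} one has $\widehat{M}_0(\int_\Omega\mathcal{A}(x,\nabla u)\diff x)\le M(t_0)\int_\Omega \mathcal{A}(x,\nabla u)\diff x\le \frac{M(t_0)}{p^-}\|u\|^{p^-}$; moreover $\widehat B(x,u)\ge 0$, so this term can be dropped; and by $(\mathcal{F}_2)(ii)$ together with Proposition~\ref{prop.nor-mod.D}(iii) applied to the modular of $L^{\sigma(\cdot)}(B)$ one gets
\begin{equation*}
\lambda\int_\Omega F(x,u)\diff x\;\ge\;\lambda C_6\int_B |u|^{\sigma(x)}\diff x\;\ge\;\lambda C_6\,\|u\|_{L^{\sigma(\cdot)}(B)}^{\sigma^+}\;\ge\;\lambda C_6\,\tilde c_k^{\,\sigma^+}\,\|u\|^{\sigma^+}.
\end{equation*}
Combining these bounds I arrive at
\begin{equation*}
T_\lambda(u)=\Phi_\lambda(u)\;\le\;\frac{M(t_0)}{p^-}\,t^{p^-}-\lambda C_6\,\tilde c_k^{\,\sigma^+}\,t^{\sigma^+},\qquad \forall\,u\in X_k,\ \|u\|=t.
\end{equation*}

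Because $\sigma^+<p^-$, the function $t\mapsto \frac{M(t_0)}{p^-}t^{p^-}-\lambda C_6\tilde c_k^{\sigma^+}t^{\sigma^+}$ is strictly negative on some interval $(0,t_k)$ and attains a strictly negative minimum there; I therefore can select $t_k>0$ small enough so that additionally $t_k^{p^-}<p^-\,t_1(\lambda)^{p^-}$, and set $\epsilon:=-\sup_{\|u\|=t_k,\,u\in X_k}\Phi_\lambda(u)>0$. Then the sphere $\mathcal S_k:=\{u\in X_k:\|u\|=t_k\}$ is contained in $T_\lambda^{-\epsilon}$. Since $(\mathcal{F}_1)$ and the definition of $T_\lambda$ yield $T_\lambda(-u)=T_\lambda(u)$, $\mathcal S_k$ is closed, symmetric, does not contain $0$, and is homeomorphic to $S^{k-1}$ via an odd homeomorphism; hence $\gamma(\mathcal S_k)=k$. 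By the monotonicity property of the genus, $\gamma(T_\lambda^{-\epsilon})\ge\gamma(\mathcal S_k)=k$, which completes the proof.

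The only genuinely delicate step is making sure the radius $t_k$ is chosen simultaneously small enough that (a) $T_\lambda$ agrees with $\Phi_\lambda$ on $\mathcal S_k$ (so that the upper bound via $(\mathcal{F}_2)(ii)$ is not contaminated by the cut–off $\phi$), (b) the modular/norm transition for $\|\cdot\|_{L^{\sigma(\cdot)}(B)}$ is in the regime where Proposition~\ref{prop.nor-mod.D}(iii) applies, and (c) the $\sigma^+$–term strictly dominates the $p^-$–term; all three are arranged by decreasing $t_k$ since $\sigma^+<p^-$ and $t_1(\lambda)>0$.
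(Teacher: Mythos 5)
Your proposal is correct and follows essentially the same route as the paper's proof: restrict to a $k$-dimensional subspace of functions supported in the ball $B$ from $(\mathcal{F}_2)$, use equivalence of norms there together with $(\mathcal{F}_2)$(ii), the bound $\widehat{M}_0(t)\leq M(t_0)t$ and the identification $T_\lambda=\Phi_\lambda$ on small spheres, and conclude via $\gamma(S^{k-1})=k$ and monotonicity of the genus. The only differences (an arbitrary subspace of $C_c^\infty(B)$ instead of extended Laplacian eigenfunctions, global exponents $p^-,\sigma^+$ in place of the local ones $p_B^-,\sigma_B^+$, and $\epsilon$ defined as a supremum rather than explicitly) are cosmetic and do not affect the argument.
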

\begin{proof}
	
Let $B$ be as in $(\mathcal{F}_2)$ and denote
$$p_B^-:=\inf_{x \in B}p(x)\ \text{ and } \ \sigma^+_{B}:=\sup_{x \in B}\sigma(x).$$
We construct a sequence $\{X_k\}_{k\in\N}$ of linear subspaces of $X$ as follows. For each $k\in\mathbb{N}$, define
\begin{equation}\label{Xk}
	X_k:=\operatorname{span}\{\varphi_1,\varphi_2,\cdots,\varphi_k\},
\end{equation}
where $\varphi_k$ is an  eigenfunction corresponding to the $k^{\text{th}}$ eigenvalue of the following eigenvalue problem : 
\begin{equation*}
	\begin{cases}
		-\Delta u=\mu u \quad &\text{in } B,\\
		u=0 \quad &\text{on } \partial B
	\end{cases}
\end{equation*}
and is extended on $\Omega$ by putting $\varphi_k(x)=0$ for $x\in\Omega\setminus B.$ Then $X_{k}$ is a linear subspace of $X$ with dimension $k$.
Since all norms on $X_k$ are mutually equivalent, we find $\delta_k>t_1(\lambda)^{-1}(>1)$ such that	
\begin{equation}\label{Equi.norm}
	\delta_k^{-1}\|u\|_{L^{\sigma(\cdot)}(B)}\leq \|u\|\leq \delta_k\|u\|_{L^{\sigma(\cdot)}(B)},\quad \forall u\in X_k.
\end{equation}
For any $u\in X_k$ with $\|u\|<\delta_k^{-1}<1$,  we have $$\int_\Omega \mathcal{A}(x,\nabla u) \diff x\leq \frac{\|u\|^{p^-}}{p^-}<t_1(\lambda)^{p^-} \text{ and } \|u\|_{L^{\sigma(\cdot)}(B)}<1.$$
Combining this with \eqref{Est.hat.M0}, \eqref{T_lambda.Est2}, \eqref{Equi.norm}, $(\mathcal{F}_2) \textnormal{(ii)}$ and invoking Proposition~\ref{prop.nor-mod.D} we obtain
\begin{align*}
	T_\lambda(u)=\Phi_\lambda(u) &\leq \frac{M(t_0)}{p^-}\int_B \mathcal{H}(x,|\nabla u|)\diff x-\lambda\int_B F(x,u)\diff x\\
	&\leq \frac{M(t_0)}{p^-}\|u\|^{p_B^-}-C_6\lambda\int_B|u|^{\sigma(x)}\diff x \\
	&\leq \frac{M(t_0)}{p^-}\|u\|^{p_B^-}-C_6\lambda \|u\|^{\sigma_B^+}_{L^{\sigma(\cdot)}(B)}\\
	&\leq \frac{M(t_0)}{p^-}\|u\|^{p_B^-}-C_6\lambda \delta_k^{-\sigma_B^+}\|u\|^{\sigma_B^+}.
\end{align*}	
Thus
\begin{align*}
	T_\lambda(u)\leq \frac{M(t_0)}{p^-}\|u\|^{\sigma_B^+}\left(\|u\|^{p_B^--\sigma_B^+}-\frac{p^- C_6 \lambda\delta_k^{-\sigma_B^+}}{M(t_0)}\right).
\end{align*}
By taking  $r$ and $\epsilon$ with 
$$0<r<\min\left\{\delta_k^{-1},\left(\frac{p^- C_6 \lambda \delta_k^{-\sigma_B^+}}{M(t_0)}\right)^{\frac{1}{p_B^--\sigma_B^+}}\right\},$$
and 
$$\epsilon:=-\frac{M(t_0)}{p^-}r^{\sigma_B^+}\left(r^{p_B^--\sigma_B^+}-\frac{p^- C_6 \lambda\delta_k^{-\sigma_B^+}}{M(t_0)}\right),$$
we have
$$
T_\lambda(u)\leq-\epsilon < 0,\quad \forall u\in {S_r}:=\{u\in X_k:\, \|u\|=r \}.
$$
It follows that $S_r\subset T_\lambda^{-\epsilon}$ and thus,
$$\gamma( T_\lambda^{-\epsilon})\geq \gamma(S_r)=k.
$$
The proof is complete.	
\end{proof}

Let $\lambda\in \left(0,\lambda_{\ast,1}\right)$. Define
\begin{gather*}
	\Sigma_k:=\{E\in\Sigma: \,   \gamma(E) \geq k\}
\end{gather*}
and 
\begin{equation}\label{ck}
c_{k}:= \inf_{E\in \Sigma_k} \sup_{u \in
	E}T_\lambda(u).
\end{equation}
We have the following.
\begin{lemma}\label{Le.c_k<0}
	For each $k \in \mathbb{N}$, it holds that $-\infty<c_k<0.$
\end{lemma}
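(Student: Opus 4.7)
The plan is to establish the two inequalities separately: the upper bound $c_k < 0$ will follow almost immediately from Lemma~\ref{genus.k}, while the lower bound $c_k > -\infty$ will follow from showing that the truncated functional $T_\lambda$ is globally bounded below on $X$.

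For the upper bound, I would invoke Lemma~\ref{genus.k} to produce, for the given $k$, a number $\epsilon > 0$ such that $\gamma(T_\lambda^{-\epsilon}) \geq k$. By the oddness hypothesis $(\mathcal{F}_1)$ and the evenness of $\phi$ applied to the even map $u \mapsto \int_\Omega \mathcal{A}(x,\nabla u)\,dx$, the sublevel set $T_\lambda^{-\epsilon}$ is closed, symmetric and excludes $0$ (since $T_\lambda(0) = 0 > -\epsilon$), so $T_\lambda^{-\epsilon} \in \Sigma_k$. Taking this $E = T_\lambda^{-\epsilon}$ as a competitor in the definition \eqref{ck} gives
$$c_k \leq \sup_{u \in T_\lambda^{-\epsilon}} T_\lambda(u) \leq -\epsilon < 0.$$

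For the lower bound, I would show that $\inf_{u \in X} T_\lambda(u) > -\infty$, whence $c_k \geq \inf_X T_\lambda > -\infty$ because $\sup_E T_\lambda \geq \inf_X T_\lambda$ for every $E \in \Sigma_k$. Split cases according to the cut-off $\phi$. If $\int_\Omega \mathcal{A}(x,\nabla u)\,dx > 2t_1(\lambda)^{p^-}$, then $\phi$ vanishes at this value, so by \eqref{T_lambda.Est3} and \eqref{Est.hat.M0},
$$T_\lambda(u) = \widehat{M}_0\!\left(\int_\Omega \mathcal{A}(x,\nabla u)\,dx\right) \geq 0.$$
Otherwise $\int_\Omega \mathcal{A}(x,\nabla u)\,dx \leq 2t_1(\lambda)^{p^-}$; using $\mathcal{A}(x,\xi) \geq \frac{1}{q^+}\mathcal{H}(x,|\xi|)$ we then get $\int_\Omega \mathcal{H}(x,|\nabla u|)\,dx \leq 2q^+ t_1(\lambda)^{p^-}$, which by Proposition~\ref{prop.nor-mod.D} yields a uniform bound $\|u\| \leq R_\lambda$ for some constant depending only on $\lambda$. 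Combining $0 \leq \phi \leq 1$, $\widehat{M}_0 \geq 0$, the growth bound \eqref{F} on $F$, and the embeddings $X \hookrightarrow L^{\alpha(\cdot)}(\Omega)$ and $X \hookrightarrow L^{\mathcal{B}}(\Omega)$ from Propositions~\ref{prop_embs} and \ref{prop_S-C-E}, we obtain
$$T_\lambda(u) \geq -\lambda \int_\Omega |F(x,u)|\,dx - \int_\Omega \widehat{B}(x,u)\,dx \geq -C(\lambda, R_\lambda),$$
i.e.\ a lower bound depending only on $\lambda$ via $R_\lambda$.

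The two cases together give $\inf_X T_\lambda > -\infty$, hence $c_k > -\infty$, and combined with the first paragraph this completes the proof. I do not anticipate a serious obstacle here; the only point that requires minor care is checking that $T_\lambda^{-\epsilon}$ belongs to $\Sigma$ (closed, symmetric, away from $0$) so that the genus estimate of Lemma~\ref{genus.k} can be used as a competitor in \eqref{ck}, and keeping track that all constants absorbed into the lower bound of $T_\lambda$ are independent of $k$.
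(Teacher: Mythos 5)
Your proposal is correct and follows essentially the same route as the paper: take $E=T_\lambda^{-\epsilon}$ from Lemma~\ref{genus.k} as a competitor in \eqref{ck} to get $c_k\leq-\epsilon<0$, and use the truncation structure of $T_\lambda$ together with the embeddings of Propositions~\ref{prop_embs} and \ref{prop_S-C-E} to see that $T_\lambda$ is bounded below on $X$, hence $c_k>-\infty$. The only difference is that you spell out the boundedness-from-below argument (case split on $\int_\Omega\mathcal{A}(x,\nabla u)\,\diff x$ versus $2t_1(\lambda)^{p^-}$), which the paper leaves as a brief remark.
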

\begin{proof} For each $k \in \mathbb{N}$, let us take $\epsilon$ as in Lemma~\ref{genus.k}. Clearly, $T_\lambda^{-\epsilon}\in \Sigma_k$ by means of Lemma~\ref{genus.k}. Moreover, from the embedding results in Propositions~\ref{prop_embs} and \ref{prop_S-C-E}, it is not difficult to deduce from the definition of $T_\lambda$ that $T_\lambda$ is bounded below. Thus, we infer
	$$-\infty<c_k\leq \sup_{u \in
		T_\lambda^{-\epsilon}}T_\lambda(u)<-\epsilon<0.$$
	The proof is complete.
\end{proof}
Now, we will choose $\lambda_{\ast,2}>0$ such that
\begin{equation*}
	K_0\min\{S^{n_1},S^{n_2}\}\min\{m_0^{\tau_1},m_0^{\tau_2}\}-K\max\left\{\lambda_{\ast,2}^{\frac{\ell^+}{\ell^+-1}},\lambda_{\ast,2}^{\frac{\ell^-}{\ell^--1}}\right\}>0,
\end{equation*}
where $K_0,K,n_1,n_2,\tau_1,\tau_2$ and $\ell(\cdot)$ are determined in Lemma~\ref{Le.PS1}.
 Set 
\begin{equation}\label{lambda_*}
	\lambda_\ast:=\min\left\{\lambda_{\ast,1},\lambda_{\ast,2}\right\},
\end{equation}
where $\lambda_{\ast,1}$ is given by \eqref{la*1}. By employing the deformation lemma we can obtain the following.

\begin{lemma}\label{Le.Seq.sol}
	For any $\lambda\in\left(0,\lambda_\ast\right)$ and $k\in\N$, if $c=c_{k}=c_{k+1}=\dots =c_{k+m}$ for some $m \in \mathbb{N}$, then $K_c\in\Sigma$ and 
	$$ \gamma(K_{c})\geq m+1, $$
	where $K_{c}:=\{u \in X\backslash\{0\}: T_\lambda'(u)=0  \text{ and }
	T_\lambda(u)=c\}$.
\end{lemma}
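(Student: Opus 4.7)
My plan is to run the classical Krasnoselskii--Rabinowitz genus multiplicity argument, applying the equivariant deformation lemma to the even $C^1$ functional $T_\lambda$. Three ingredients are needed: that $c=c_k$ is a critical level at which a local $(PS)_c$ holds, that $K_c$ belongs to $\Sigma$, and a genus-subadditivity computation that contradicts the definition of $c_k$ if $\gamma(K_c)\leq m$. The principal obstacle is the first ingredient: Lemma~\ref{Le.PS1} only yields $(PS)_c$ in a restricted range, and $T_\lambda$ (not $\Phi_\lambda$) is what we are analyzing; once this is settled, the remaining steps are routine.

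\textbf{Step 1: $(PS)_c$ for $T_\lambda$.} Let $\{u_n\}\subset X$ satisfy $T_\lambda(u_n)\to c$ and $T_\lambda'(u_n)\to 0$. By Lemma~\ref{Le.c_k<0} we have $c<0$, so for all large $n$, $T_\lambda(u_n)<c/2<0$; Lemma~\ref{T_lambda(u)<0} then gives $T_\lambda(u_n)=\Phi_\lambda(u_n)$ and $T_\lambda'(u_n)=\Phi_\lambda'(u_n)$, making $\{u_n\}$ a $(PS)_c$-sequence for $\Phi_\lambda$. The definition of $\lambda_{\ast,2}$ was engineered precisely so that the right-hand side of \eqref{Le.PS1.c} is strictly positive for $\lambda<\lambda_{\ast,2}$; hence every $c<0$ automatically meets \eqref{Le.PS1.c}, and Lemma~\ref{Le.PS1} produces a convergent subsequence.

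\textbf{Step 2: Structure of $K_c$.} The truncated functional $T_\lambda$ is even because $(\mathcal{F}_1)$ makes $F(x,\cdot)$ even and $\widehat B(x,\cdot)$ is even by construction; therefore $K_c=-K_c$. Since $T_\lambda(0)=0\ne c$, we have $0\notin K_c$, and $K_c$ is closed in $X\setminus\{0\}$, so $K_c\in\Sigma$. Step~1 furnishes compactness of $K_c$, whence $\gamma(K_c)$ is a well-defined finite integer.

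\textbf{Step 3: Deformation and contradiction.} Assume, toward a contradiction, that $\gamma(K_c)\le m$. The continuity property of the genus on compact sets yields a symmetric open neighborhood $U\supset K_c$ with $\gamma(\overline U)=\gamma(K_c)\le m$. Applying the standard equivariant deformation lemma to $T_\lambda$ (justified by evenness, $C^1$-smoothness, and $(PS)_c$ from Step~1) I obtain some $\epsilon\in(0,|c|/2)$ and an odd $\eta\in C(X,X)$ with
\begin{equation*}
\eta\bigl(T_\lambda^{c+\epsilon}\setminus U\bigr)\subset T_\lambda^{c-\epsilon}.
\end{equation*}
By definition of $c_{k+m}=c$, there is $E\in\Sigma_{k+m}$ with $\sup_E T_\lambda<c+\epsilon$, so $E\subset T_\lambda^{c+\epsilon}$. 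Subadditivity of the genus gives
\begin{equation*}
\gamma\bigl(\overline{E\setminus U}\bigr)\ge \gamma(E)-\gamma(\overline U)\ge (k+m)-m=k,
\end{equation*}
hence $\eta(\overline{E\setminus U})\in\Sigma_k$; but $\sup_{\eta(\overline{E\setminus U})}T_\lambda\le c-\epsilon<c=c_k$, contradicting the definition of $c_k$. Therefore $\gamma(K_c)\ge m+1$, completing the proof.
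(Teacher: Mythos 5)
Your proposal is correct and follows essentially the same route as the paper: verify that $T_\lambda$ satisfies the Palais--Smale condition at the (negative) level $c$ by passing to $\Phi_\lambda$ via Lemma~\ref{T_lambda(u)<0} and the choice of $\lambda_\ast$ (so that every $c<0$ lies below the threshold in \eqref{Le.PS1.c}), note $K_c\in\Sigma$ is compact, and then run the standard genus/deformation contradiction with a symmetric neighborhood $U$ of $K_c$, exactly as in the paper's proof (which cites \cite[Lemma 1.3]{AR}). The only cosmetic difference is that you spell out the local $\textup{(PS)}$ verification for $T_\lambda$ that the paper leaves implicit, and you should note that the deformation map is in fact an odd homeomorphism, which is what guarantees $\eta(\overline{E\setminus U})$ is closed and hence lies in $\Sigma_k$.
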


\begin{proof}
	Let $\lambda\in\left(0,\lambda_\ast\right)$. Then,  Lemma~\ref{Le.c_k<0} and the choice of $\lambda_\ast$ imply
	$$c<0<K_0\min\{S^{n_1},S^{n_2}\}\min\{m_0^{\tau_1},m_0^{\tau_2}\}-K\max\left\{\lambda^{\frac{\ell^+}{\ell^+-1}},\lambda^{\frac{\ell^-}{\ell^--1}}\right\}.$$
	Thus, $K_c\in\Sigma$ and $K_{c}$ is a compact set by means of Lemmas~\ref{Le.PS1} and \ref{T_lambda(u)<0}. Although the conclusion of the lemma can be easily obtained by a standard argument using the deformation lemma (see e.g., \cite[Lemma 1.3]{AR}), we will present the details here for seeking the completeness. Assume on the contrary that $\gamma(K_{c})\leq m$. Then, there exists a closed
	and symmetric neighborhood $U$ of $ K_{c}$ such that
	$\gamma(U)= \gamma(K_{c}) \leq m$. Note that we can choose
	$U\subset T_{\lambda}^{0}$ because $c<0$. By the deformation lemma  \cite[Lemma 1.3]{AR}, we have an odd homeomorphism $\eta$ of $X$ onto $X$ such that $\eta(T_{\lambda}^{c+\delta}\setminus U)\subset
	T_{\lambda}^{c-\delta}$ for some $\delta > 0$ with $0<\delta < -c$.
	Thus, $T_{\lambda}^{c+\delta}\subset T_{\lambda}^{0}$ and by
	definition of $c=c_{k+m}$, there exists $E \in \Sigma_{k+m}$ such
	that $ \sup_{u \in E} T_\lambda(u)< c+\delta$, that is,
	$E \subset T_{\lambda}^{c+\delta}$ and
	\begin{equation}\label{estrela1}
		\eta(E\setminus U) \subset \eta ( T_{\lambda}^{c+\delta}\setminus U)\subset
		T_{\lambda}^{c-\delta}.
	\end{equation}
	But $\gamma(\overline{E\setminus U})\geq \gamma(E)-\gamma(U) \geq k$ and
	$\gamma(\eta(\overline{E\setminus U}))\geq  \gamma(\overline{E \setminus U})\geq k$.
	Then $\eta(\overline{E\setminus U}) \in \Sigma_{k}$ and hence, by \eqref{ck} and \eqref{estrela1} we arrive at
	\begin{equation*}
		c_{k}\leq  \sup_{u \in
			\eta(\overline{E\setminus U})}T_\lambda(u)\leq c-\delta=c_k-\delta,
	\end{equation*}
	a contradiction. Hence, $\gamma(K_{c})>m$ and the lemma is proved.
\end{proof}
\begin{proof}[\textbf{Proof of Theorem \ref{Theo.cc} }]
	Let $\lambda\in (0,\lambda_*)$ with $\lambda_*$ given in \eqref{lambda_*}. By Lemmas~\ref{Le.c_k<0} and \ref{Le.Seq.sol}, $T_\lambda$ admits a sequence $\{u_k\}_{k\in\N}$ of critical points with $T_\lambda(u_k)<0$ for all $k\in\N$. By Lemma~\ref{T_lambda(u)<0} and \eqref{t_1}, $\{u_k\}_{k\in\N}$ are also critical points of $\Phi_\lambda$ with 
	$\int_\Omega \mathcal{A}(x,\nabla u_k) \diff x<t_1(\lambda)^{p^-}<t_0$; hence, $\{u_k\}_{k\in\N}$ are solutions to problem~\eqref{e1.1'}. Now, denote by $u_\lambda$ one of those $u_k$. By Lemma~\ref{T_lambda(u)<0} again, we have
	$$\frac{1}{q^+}\int_\Omega \mathcal{H}(x,|\nabla u_\lambda|) \diff x\leq \int_\Omega \mathcal{A}(x,\nabla u_\lambda) \diff x<t_1(\lambda)^{p^-}.$$
	Combining this with \eqref{lim.t1}  derives
	$$\lim_{\lambda\to 0^+}\int_\Omega \mathcal{H}(x,|\nabla u_\lambda|) \diff x=0,$$
	and this is equivalent to
	$$\lim_{\lambda\to 0^+}\|u_\lambda\| =0$$
	in view of Proposition~\ref{prop.nor-mod.D}. The proof is complete.
\end{proof}

\subsection{The generalized superlinear case}${}$

In this subsection, we will prove Theorem~\ref{Theo.sl.M=1} by developing the idea of \cite[Theorem 2.2]{KK16}, which was used only for the $(p,q)$-Laplacian involving subcritical growth. More precisely, we will make use of the genus theory again to determine the critical points of the energy functional associated with problem~\eqref{e1.1'}. The assumption $M(\cdot) \equiv 1$  enables us to work directly with the energy functional instead of its truncation. Now, let us assume that all assumptions imposed in Theorem~\ref{Theo.sl.M=1} hold and let $M(\cdot) \equiv 1$ and $\lambda=1$. 

Define the energy functional $\Psi_\theta: X\to\R$ as
$$\Psi_\theta(u):=\int_\Omega \mathcal{A}(x,\nabla u)\,\diff x-\int_\Omega F(x,u) \, \diff x-\theta \int_\Omega \widehat{B}(x,u)\,\diff x,\quad u\in X,$$
where $\widehat{B}$ is given by \eqref{B.hat}. As observed in the previous subsection, $\Psi_\theta$ is of $C^1$ and its Fr\'echet derivative $\Psi^\prime_\theta: X\to X^\ast$ is given by 
\begin{equation}\label{Diff2}
	\left\langle \Psi^\prime_\theta (u),v\right\rangle=  \int_\Omega A(x,\nabla u)\cdot\nabla v\,\diff x-\int_\Omega f(x,u)v\,\diff x-\theta\int_\Omega B(x,u)v\,\diff x,\quad \forall\, u,v\in X.
\end{equation}
Clearly, a critical point of $\Psi_\theta$ is a solution to \eqref{e1.1'} with $M(\cdot) \equiv 1$ and $\lambda=1$. Moreover, $\Psi_\theta$ is even and $\Psi_\theta(0)=0$. The following lemma is essential to verify the (local) $\textup{(PS)}$ condition for $\Psi_\theta$. 
\begin{lemma}\label{Le.PS4.c}
	For each  $\theta>0$, $\Psi_\theta$ satisfies the $\textup{(PS)}_c$ condition with $c\in \mathbb{R}$ such that
	\begin{equation}\label{PS2-c}
		c<\left(\frac{1}{\beta} - \frac{1}{r_1^-}\right)\min\{S^{n_1},S^{n_2}\}\min\{\theta^{-\tau_1},\theta^{-\tau_2}\}-\frac{\|e\|_1}{\beta},
	\end{equation}
	where $n_1$, $n_2$, $\tau_1$ and $\tau_2$ are given in Lemma~\ref{Le.PS1}.
\end{lemma}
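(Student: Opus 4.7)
The plan is to follow the standard three-step scheme: establish boundedness of any $\textup{(PS)}_c$ sequence, invoke Theorem~\ref{Theo.ccp} to locate possible concentration atoms, and then rule them out using the numerical bound on $c$.

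First I would show boundedness. Let $\{u_n\}$ be a $\textup{(PS)}_c$ sequence and compute, as $n\to\infty$,
\[
\Psi_\theta(u_n)-\tfrac{1}{\beta}\scal{\Psi_\theta'(u_n),u_n}
= \int_\Omega\!\Big[\mathcal{A}(x,\nabla u_n)-\tfrac{1}{\beta}\mathcal{H}(x,|\nabla u_n|)\Big]\diff x
-\tfrac{1}{\beta}\!\int_\Omega\!\big[\beta F(x,u_n)-f(x,u_n)u_n\big]\diff x
+\theta\!\int_\Omega\!\Big[\tfrac{1}{\beta}B(x,u_n)u_n-\widehat{B}(x,u_n)\Big]\diff x.
\]
Using $\mathcal{A}(x,\xi)\geq \frac{1}{q^+}\mathcal{H}(x,|\xi|)$ together with $\beta\in[q^+,r_1^-)$ gives the first bracket $\geq(\frac{1}{q^+}-\frac{1}{\beta})\mathcal{H}(x,|\nabla u_n|)$; $(\mathcal{F}_3)$ combined with $(\mathcal P)$ (applied via \eqref{mu1}) cancels this against the second bracket up to the error $\frac{\|e\|_1}{\beta}$; and $\beta<r_1^-\leq r_1(x)<r_2(x)$ makes the third bracket $\geq(\frac{1}{\beta}-\frac{1}{r_1^-})\mathcal{B}(x,u_n)$. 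This yields
\[
c+o(1)+o(1)\|u_n\|\;\geq\;\theta\Big(\tfrac{1}{\beta}-\tfrac{1}{r_1^-}\Big)\!\int_\Omega\!\mathcal{B}(x,u_n)\diff x-\tfrac{\|e\|_1}{\beta},
\]
so $\int_\Omega \mathcal{B}(x,u_n)\diff x$ is bounded. To upgrade to boundedness of $\|u_n\|$, I would then use $(\mathcal{F}_0)$ together with the Young-type inequality
\[
|t|^{\alpha(x)}\leq \varepsilon\, c_1(x)|t|^{r_1(x)}+C_\varepsilon\, c_1(x)^{-\frac{\alpha(x)}{r_1(x)-\alpha(x)}}
\]
(valid since $\alpha(x)<r_1(x)$), combined with the hypothesis $c_1^{-\alpha/(r_1-\alpha)}\in L^1(\Omega)$, to dominate $\int F(x,u_n)\diff x$ by $\int \mathcal{B}(x,u_n)\diff x$ up to a constant. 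Plugging into $\Psi_\theta(u_n)=c+o(1)$ bounds $\int \mathcal{A}(x,\nabla u_n)\diff x$, hence $\int \mathcal{H}(x,|\nabla u_n|)\diff x$, and Proposition~\ref{prop.nor-mod.D} delivers boundedness of $\|u_n\|$.

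Next, I would pass to a subsequence and apply Theorem~\ref{Theo.ccp} to obtain $u_n\rightharpoonup u$ in $X$, together with measures $\mu,\nu$ and an at most countable family $\{x_i\}_{i\in I}\subset \mathcal{C}$, $\{\mu_i\},\{\nu_i\}\subset(0,\infty)$ satisfying \eqref{T.ccp.form.nu}--\eqref{T.ccp.nu_mu}. To show $I=\emptyset$, fix $i\in I$ and test $\Psi_\theta'(u_n)$ against $\phi_{i,\epsilon}u_n$ as in the proof of Theorem~\ref{Theo.ccp}. Using the same Young-inequality trick on $\int u_n A(x,\nabla u_n)\cdot\nabla\phi_{i,\epsilon}\diff x$ to absorb it into $\delta\int\mathcal{H}(x,|\nabla u_n|)\diff x+C_\delta\int\mathcal{H}(x,|u_n\nabla\phi_{i,\epsilon}|)\diff x$, together with the compact embedding from Proposition~\ref{prop_S-C-E} (which kills the $\int\phi_{i,\epsilon}f(x,u_n)u_n\diff x$ term in the limit thanks to $(\mathcal{F}_0)$), and letting first $n\to\infty$ and then $\epsilon\to 0^+$, I expect to arrive at the atomic inequality
\[
\mu_i\leq \theta\nu_i.
\]
Combined with \eqref{T.ccp.nu_mu}, this gives $S\nu_i^{1/\eta_i}\leq(\theta\nu_i)^{1/\xi_i}$ for some $\eta_i\in\{p^\ast(x_i),q^\ast(x_i)\}$ and $\xi_i\in\{p(x_i),q(x_i)\}$, whence $\theta\nu_i\geq S^{n_i}\theta^{-\tau_i}$ with $n_i\in\{n_1,n_2\}$, $\tau_i\in\{\tau_1,\tau_2\}$, exactly as in the computation at the end of the proof of Lemma~\ref{Le.PS1}.

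Finally, I would combine this with the boundedness estimate from the first step: passing to $\liminf$ in $\Psi_\theta(u_n)-\frac{1}{\beta}\scal{\Psi_\theta'(u_n),u_n}$ and using $\nu\geq \nu_i\delta_{x_i}$ yields
\[
c\;\geq\;\theta\Big(\tfrac{1}{\beta}-\tfrac{1}{r_1^-}\Big)\nu_i-\tfrac{\|e\|_1}{\beta}\;\geq\;\Big(\tfrac{1}{\beta}-\tfrac{1}{r_1^-}\Big)\min\{S^{n_1},S^{n_2}\}\min\{\theta^{-\tau_1},\theta^{-\tau_2}\}-\tfrac{\|e\|_1}{\beta},
\]
contradicting \eqref{PS2-c}. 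Hence $I=\emptyset$, so $\int_\Omega\mathcal{B}(x,u_n)\diff x\to \int_\Omega\mathcal{B}(x,u)\diff x$; Lemma~\ref{L.brezis-lieb} plus Proposition~\ref{prop.nor-mod.D} then give $u_n\to u$ in $L^{\mathcal{B}}(\Omega)$, and the argument ending the proof of Lemma~\ref{Le.PS1} (Hölder, $(\mathcal{F}_0)$, Proposition~\ref{prop_embs}, and the $(\textup{S}_+)$-property of $A$ from \cite{crespo2022new}) upgrades this to $u_n\to u$ in $X$. The main delicate point I anticipate is the careful bookkeeping in the boundedness step, where the usual $\Psi_\theta-\tfrac{1}{\beta}\Psi_\theta'$ trick alone only bounds $\int\mathcal{B}(x,u_n)\diff x$; the integrability assumption $c_1^{-\alpha/(r_1-\alpha)}\in L^1$ is precisely what is needed to close the loop.
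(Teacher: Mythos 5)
Your proposal is correct and follows essentially the same route as the paper: the $\Psi_\theta-\frac{1}{\beta}\langle\Psi_\theta'(\cdot),\cdot\rangle$ estimate combined with $(\mathcal{F}_3)$, $(\mathcal P)$ and the Young-inequality/$c_1^{-\frac{\alpha}{r_1-\alpha}}\in L^1(\Omega)$ trick for boundedness, the concentration-compactness principle with the atomic bound $\mu_i\leq\theta\nu_i$ to rule out atoms under \eqref{PS2-c}, and the Brezis--Lieb plus $(\textup{S}_+)$ argument for strong convergence. The only caveat is that the first estimate yields $\int_\Omega\mathcal{B}(x,u_n)\diff x\leq C\left(1+\|u_n\|\right)$ rather than an outright uniform bound, so one must keep this $\|u_n\|$-dependence and close the loop via $\|u_n\|^{p^-}\leq C\left(1+\int_\Omega\mathcal{B}(x,u_n)\diff x\right)$ together with $p^->1$, exactly as your concluding remark anticipates and as the paper does.
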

\begin{proof}
Let $\theta>0$ and let $\{u_n\}_{n\in\N} \subset X$ be a $\text{(PS)}_c$ sequence for $\Psi_\theta$ with $c$ satisfying \eqref{PS2-c}, namely,
	\begin{equation}\label{ps2.e1}
		\Psi_\theta(u_n) \to c~ \text{ and } ~ \Psi'_\theta(u_n) \to 0~ \text{ as } ~ n\to\infty.
	\end{equation}
We first claim that $\{u_n\}_{n\in\N}$ is bounded in $X$. Indeed, by $(\mathcal{F}_3)$ and \eqref{mu1} it follows that for all $n\in \mathbb{N},$
\begin{align}\label{PT.sup.b0}
\notag	\Psi_\theta(u_n) - \frac{1}{\beta}\langle \Psi_\theta'(u_n), u_n\rangle & \geq \left(\frac{1}{q^+}-\frac{1}{\beta}\right) \int_\Omega\mathcal{H}(x,|\nabla u_n|) \diff x+\frac{1}{\beta}\int_\Omega \left[f(x,u_n)u_n - \beta F(x,u_n)\right]\, \diff x\\
	\notag & \hspace{1cm}  + \theta \left(\frac{1}{\beta} - \frac{1}{r_1^-}\right)\int_{\Omega}\mathcal{B}(x,u_n)\diff x\\
\notag	& \geq \left(\frac{1}{q^+}-\frac{1}{\beta}\right)
	\int_\Omega |\nabla u_n|^{p(x)}\,\diff x-\frac{\left(\beta-q^+\right)\kappa_1}{\beta q^+}
	\int_\Omega |u_n|^{p(x)}\,\diff x-\frac{\|e\|_1}{\beta}\\
\notag	& \hspace{1cm}+ \theta \left(\frac{1}{\beta} - \frac{1}{r_1^-}\right)\int_{\Omega}\mathcal{B}(x,u_n)\diff x\\
	&\geq \theta \left(\frac{1}{\beta} - \frac{1}{r_1^-}\right)\int_{\Omega}\mathcal{B}(x,u_n)\diff x-\frac{\|e\|_1}{\beta}.
\end{align}	
From \eqref{PT.sup.b0} and \eqref{ps2.e1}, we have that for all $n\in \mathbb{N}$ large,
\begin{equation}\label{PT.sup.b1}
	\int_{\Omega}\mathcal{B}(x,u_n)\diff x\leq \hat{C}(1+\|u_n\|),\ \ \forall n\in\mathbb{N},
\end{equation}		
where $\hat{C}$ is a positive constant independent of $u_n$. On the other hand, by \eqref{F} and invoking Proposition~\ref{prop.nor-mod.D} we have for $n$ large,
\begin{align}\label{PT.sup.b2}
	\notag\frac{1}{q^+}\left(\|u_n\|^{p^-}-1\right)&\leq \Psi_\theta(u_n)+\int_\Omega F(x,u_n)\diff x+\frac{\theta}{r_1^-}\int_{\Omega}\mathcal{B}(x,u_n)\diff x\\
	&\leq c+1+2C_1\int_{\Omega}\left(1+|u_n|^{\alpha(x)}\right)\diff x+\frac{\theta}{r_1^-}\int_{\Omega}\mathcal{B}(x,u_n)\diff x.
\end{align}
By Young's inequality we have
$$|u_n|^{\alpha(x)}\leq \frac{r_1(x)-\alpha(x)}{r_1(x)}c_1(x)^{-\frac{\alpha(x)}{r_1(x)-\alpha(x)}}+\frac{\alpha(x)}{r_1(x)}c_1(x)|u_n|^{r_1(x)}.$$
Plugging this into \eqref{PT.sup.b2} and noticing $c_1^{-\frac{\alpha}{r_1-\alpha}}\in L^1(\Omega)$ we arrive at
\begin{equation}\label{PT.PS1.un-b.2}
	\|u_n\|^{p^-}\leq \widetilde{C}\left(1+\int_{\Omega}\mathcal{B}(x,u_n)\diff x\right),
\end{equation}
	where $\widetilde{C}$ is a positive constant independent of $u_n$. From \eqref{PT.sup.b1} and \eqref{PT.PS1.un-b.2} we deduce the boundedness of $\{u_n\}_{n\in\N}$ in $X$ due to $p^->1$.  In view of  Theorem~\ref{Theo.ccp}, up to a subsequence we have
	\begin{eqnarray}
		u_n(x) &\to& u(x)  \quad \text{a.a.} \ \ x\in\Omega,\label{PL.PS2.a.e}\\
		u_n &\rightharpoonup& u  \quad \text{in} \  X,\label{PL.PS2.w-conv}\\
		\mathcal{H}(\cdot,|\nabla u_n|) &\overset{\ast }{\rightharpoonup }&\mu \geq \mathcal{H}(\cdot,|\nabla u|) + \sum_{i\in I} \mu_i \delta_{x_i} \ \text{in}\  \mathcal{M}(\overline{\Omega}),\label{PL.PS2.mu}\\
		\mathcal{B}(\cdot,u_n)&\overset{\ast }{\rightharpoonup }&\nu=\mathcal{B}(\cdot,u) + \sum_{i\in I}\nu_i\delta_{x_i} \ \text{in}\ \mathcal{M}(\overline{\Omega}),\label{PL.PS2.nu}\\
		S \min \left\{\nu_i^{\frac{1}{p^\ast(x_i)}},\nu_i^{\frac{1}{q^\ast(x_i)}}  \right\}&\leq& \max \left\{\mu_i^{\frac{1}{p(x_i)}},\mu_i^{\frac{1}{q(x_i)}}\right\}, \quad \forall i\in I.\label{PL.PS2.mu-nu}
	\end{eqnarray}
	As before, we will show that $I = \emptyset$. Indeed, by contradiction, we suppose  there exists $i\in I$. By repeating arguments leading to \eqref{PL.PS1.Re.mu-nu}, we obtain 
	\begin{equation*}
		\mu_i \leq \theta\nu_i.
	\end{equation*}
Combining this with \eqref{PL.PS2.mu-nu} and arguing as those leading to \eqref{PL.PS1.mu_i.3} we arrive at
\begin{equation}\label{est.mu_nu2}
	\min\{S^{n_1},S^{n_2}\}\min\{\theta^{-\tau_1},\theta^{-\tau_2}\}\leq \mu_i\leq \theta\nu_i,
\end{equation}
where $n_1$, $n_2$, $\tau_1$ and $\tau_2$ are given in Lemma~\ref{Le.PS1}.	On the other hand, by \eqref{PT.sup.b0} we have
	\begin{align}\label{e5.60}
		\notag c + o_n(1) & = \Psi_\theta(u_n) - \frac{1}{\beta}\langle \Psi_\theta'(u_n), u_n \rangle  \geq \theta \left(\frac{1}{\beta} - \frac{1}{r_1^-}\right)\int_{\Omega}\mathcal{B}(x,u_n)\diff x-\frac{\|e\|_1}{\beta}. 
	\end{align}	
	Passing to the limit as $n\to \infty$ in the last inequality and utilizing  \eqref{PL.PS2.nu} and \eqref{est.mu_nu2} we obtain
	$$
	c\geq \left(\frac{1}{\beta} - \frac{1}{r_1^-}\right)\theta\nu_i-\frac{\|e\|_1}{\beta}\geq \left(\frac{1}{\beta} - \frac{1}{r_1^-}\right)\min\{S^{n_1},S^{n_2}\}\min\{\theta^{-\tau_1},\theta^{-\tau_2}\}-\frac{\|e\|_1}{\beta},
	$$ 
	which is a contradiction with \eqref{PS2-c}. Thus, we have proved that $I = \emptyset$. Then, arguing as in the proof Lemma~\ref{Le.PS1} we infer that $u_n \to u$ in $X$ as $n\to \infty$, and the proof is complete.
\end{proof}

Let $\{X_k\}_{k\in\N}$ be a sequence of finitely dimensional spaces of $X$ constructed as in the proof of Lemma~\ref{genus.k} with $B$ given in $(\mathcal{F}_4)$. 
\begin{lemma}\label{Le.Super.Rk}
Let $\theta>0$.	Then, there exists a squence $\{R_k\}_{k\in\N}$ independent of $\theta$ such that $1<R_k<R_{k+1}$ for all $k\in\mathbb{N}$ and for each $k\in\mathbb{N}$,  
	$$\Psi_\theta (u)<0,\quad \forall u \in X_k \text{ with } \|u\|>R_k.$$
\end{lemma}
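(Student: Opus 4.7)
The plan is to exploit two structural features of the setup. First, since $\widehat{B}(x,\cdot)\geq 0$ (an easy direct computation from \eqref{B.hat} shows $\widehat{B}(x,t)=\frac{c_1(x)}{r_1(x)}|t|^{r_1(x)}+\frac{c_2(x)}{r_2(x)}a(x)^{\frac{r_2(x)}{q(x)}}|t|^{r_2(x)}\geq 0$), the critical contribution $-\theta\int_{\Omega}\widehat{B}(x,u)\diff x$ is nonpositive for any $\theta>0$; hence it suffices to produce $R_k>1$, independent of $\theta$, for which the reduced quantity
\[
\Psi_0(u):=\int_\Omega \mathcal{A}(x,\nabla u)\diff x-\int_\Omega F(x,u)\diff x
\]
is negative on $\{u\in X_k:\|u\|>R_k\}$. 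Second, by the construction in \eqref{Xk}, every $u\in X_k$ satisfies $\operatorname{supp}u\subset\overline{B}$, so all of the relevant integrals reduce to integrals over $B$.

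The next step is to quantify the superlinearity of $F$ on $B$. Given any $L>0$, assumption $(\mathcal{F}_4)$ provides $T_L>0$ with $F(x,t)\geq L|t|^{q_B^+}$ for $|t|\geq T_L$ and a.a.\ $x\in B$, while \eqref{F} furnishes a uniform bound for $|t|\leq T_L$. Combining these yields
\[
F(x,t)\geq L|t|^{q_B^+}-C_L,\qquad \text{a.a.\ }x\in B,\ \forall t\in\R,
\]
for some constant $C_L>0$ depending only on $L$ and the data.

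Because $X_k$ is finite-dimensional and consists of functions supported in $\overline{B}$, all norms restricted to $X_k$ are mutually equivalent (noting that $\|u\|_{L^{q_B^+}(B)}=0$ forces $u\equiv 0$ on $B$ and hence on $X_k$ by linear independence of the eigenfunctions). Hence there exist constants $\gamma_k,\Gamma_k>0$, depending on $k$ and the data but not on $\theta$, such that
\[
\int_B |u|^{q_B^+}\diff x\geq \gamma_k^{\,q_B^+}\|u\|^{q_B^+}\quad\text{and}\quad \int_\Omega\mathcal{A}(x,\nabla u)\diff x\leq \Gamma_k\bigl(\|u\|^{q_B^+}+1\bigr)
\]
for all $u\in X_k$. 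The first inequality is the norm-equivalence between $\|\cdot\|$ and $\|\cdot\|_{L^{q_B^+}(B)}$ on $X_k$; the second follows from the pointwise bound $\mathcal{A}(x,\xi)\leq \frac{1+\|a\|_\infty}{p^-}(|\xi|^{p(x)}+|\xi|^{q(x)})$ together with $t^{p(x)}+t^{q(x)}\leq 2(1+t^{q_B^+})$ for $x\in B$ and the equivalence of Lebesgue norms on the finite-dimensional space $\{\nabla u:u\in X_k\}\subset L^{q_B^+}(B;\R^N)$. Combining the two estimates,
\[
\Psi_\theta(u)\leq \Psi_0(u)\leq \bigl(\Gamma_k-L\gamma_k^{\,q_B^+}\bigr)\|u\|^{q_B^+}+\Gamma_k+C_L|B|
\]
for every $u\in X_k$. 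Choosing $L=L_k$ so large that $L_k\gamma_k^{\,q_B^+}\geq 2\Gamma_k$ renders the coefficient of $\|u\|^{q_B^+}$ at most $-\Gamma_k<0$, and then $\bar R_k:=\bigl(1+C_{L_k}|B|/\Gamma_k\bigr)^{1/q_B^+}$ ensures $\Psi_\theta(u)<0$ whenever $\|u\|>\bar R_k$. Setting $R_1:=\max\{\bar R_1,2\}$ and $R_{k+1}:=\max\{\bar R_{k+1},R_k+1\}$ produces the desired strictly increasing sequence with $R_k>1$, each independent of $\theta$.

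The main technical obstacle is the exponent mismatch: Proposition~\ref{prop.nor-mod.D} only gives $\int_\Omega\mathcal{H}(x,|\nabla u|)\diff x\lesssim \|u\|^{q^+}$ for large $\|u\|$, whereas $(\mathcal{F}_4)$ delivers growth of $F$ only at the smaller rate $|t|^{q_B^+}$; if one applied the a priori bounds globally, the $\|u\|^{q^+}$ term would dominate and the argument would fail. The resolution is built into the construction of $X_k$: since the supports lie in $\overline{B}$, only the local exponents on $\overline B$ enter the estimate, and finite-dimensionality of $X_k$ then allows all comparisons to be carried out at the single exponent $q_B^+$, uniformly in $\theta$.
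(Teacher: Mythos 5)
Your proof is correct and follows essentially the same route as the paper: drop the nonpositive term $-\theta\int_\Omega\widehat{B}(x,u)\diff x$, use $(\mathcal{F}_4)$ together with \eqref{F} to get $F(x,t)\geq L|t|^{q_B^+}-C_L$ on $B$, exploit that functions in $X_k$ are supported in $\overline{B}$ so only the local exponent $q_B^+$ enters, and use equivalence of norms on the finite-dimensional space $X_k$ before choosing $L_k$ large and $R_k$ accordingly. The only (harmless) deviation is in the gradient term: the paper bounds $\int_\Omega\mathcal{A}(x,\nabla u)\diff x\leq\frac{1}{p^-}\|u\|^{q_B^+}$ directly via the modular--norm relation localized to $B$, whereas you use a pointwise bound plus finite-dimensional norm equivalence for $\{\nabla u:u\in X_k\}$, which yields a $k$-dependent constant $\Gamma_k$ but changes nothing essential.
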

\begin{proof}
	Let $k\in\N$. By the mutual equivalence of norms in $X_k$, we find $a_k>0$ such that
	\begin{equation}\label{equiv.ak}
		\|u\|_{L^{q^+_B}(B)}\geq a_k \|u\|,\quad \forall u\in X_k.
	\end{equation}
Let $L_k>\left(p^-a_k^{q_B^+}\right)^{-1}$. Then, by ($\mathcal{F}_4$) there exists $T_k>1$ such that:
	$$F(x,t) \geq L_k|t|^{q^+_B},\quad \forall |t|>T_k,\, \text{a.a. } x\in B.$$
	On the other hand, it follows from \eqref{F} that
	$$F(x,t)\geq -2C_1(1+T_k^{\alpha^+}):=N_k,\quad \forall |t|\leq T_k, \, \text{a.a. } x \in \Omega.$$
Combining the last two estimates gives
	\begin{equation}\label{Est.F.2}
		F(x,t) \geq L_k|t|^{q_B^+}-L_kT_k^{q_B^+}+N_k,\quad \forall t\in \mathbb{R}, \text{ a.a. } x \in B.
	\end{equation}
Now, let us choose $R_k>0$ such that 
$$\round{\frac{1}{p^-}- L_ka_k^{q_B^+}}R_k^{q_B^+}+\left(L_kT_k^{q_B^+}-N_k\right)|B|<0.$$
Then, utilizing \eqref{equiv.ak}, \eqref{Est.F.2} and the fact that $\supp u \subset B$ for $u\in X_k$ it holds that
	\begin{align}\label{R_k.1}
		\notag
		\Psi_\theta(u)& \leq \frac{1}{p^-} \|u\|^{q_B^+}-L_ka_k^{q_B^+}\|u\|^{q^+_B}-N_k|B|
		\leq \round{\frac{1}{p^-}-L_ka_k^{q_B^+}}R_k^{q_B^+}+\left(L_kT_k^{q_B^+}-N_k\right)|B|<0
	\end{align}
for all $u \in X_k$ with $\|u\|>R_k$.  Obviously, $\{R_k\}_{k\in\N}$ above can be chosen such that $R_k$ is independent of $\theta$ and $1<R_k<R_{k+1}$ for all $k\in\mathbb{N}$. The proof is complete.  
\end{proof}
	For each $k\in\mathbb{N}$, let $R_k$ be as in Lemma~\ref{Le.Super.Rk} and define
	\begin{equation}
		D_k:=\{u \in X_k:\, \|u\|\leq R_k\},
	\end{equation} 
\begin{equation}\label{Gk}
		G_k:=\{g\in C(D_k,X):\, g\ \text{is odd and}\ g(u)=u\, \text{on}\ \partial D_k\}.
	\end{equation}
	Note that $G_k\ne\emptyset$ since $\operatorname{id}\in G_k.$ Define
	\begin{equation}\label{PT.Super.ck}
		c_k:=\inf_{g\in G_k}\, \max_{u\in D_k}\, \Psi_\theta(g(u)).
	\end{equation}
	Arguing as in \cite[Proof of Theorem 2.1]{AR} we obtain the following via the deformation lemma.
	\begin{lemma}\label{PL.Super.ck}
		For each $k\in \mathbb{N}$, $c_k$ defined in \eqref{PT.Super.ck} is a critical value of $\Psi_\theta$ provided $\Psi_\theta$ satisfies the $\textup{(PS)}_{c_k}$ condition.
	\end{lemma}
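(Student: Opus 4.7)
The plan is to argue by contradiction using the equivariant (odd) version of the deformation lemma, following the classical symmetric minimax approach of Ambrosetti--Rabinowitz. Since $\Psi_\theta$ is even (because $\mathcal{A}(x,\cdot)$ and $\widehat{B}(x,\cdot)$ are even in the second variable, and $F(x,\cdot)$ is even by $(\mathcal{F}_1)$), the available deformation preserves odd symmetry, which is precisely what matches the class $G_k$ consisting of odd maps.

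The first step will be to establish two preliminary observations that make the argument work cleanly. Any $g\in G_k$ is odd, so $g(0)=0$ and therefore $\max_{u\in D_k}\Psi_\theta(g(u))\ge\Psi_\theta(0)=0$; taking the infimum over $g\in G_k$ yields $c_k\ge 0$. On the other hand, by Lemma~\ref{Le.Super.Rk}, $\Psi_\theta(u)<0$ for all $u\in X_k$ with $\|u\|>R_k$, and by continuity this persists to $\partial D_k=\{u\in X_k:\|u\|=R_k\}$, so the compactness of $\partial D_k$ inside the finite-dimensional space $X_k$ gives $b_k:=\max_{u\in\partial D_k}\Psi_\theta(u)<0$. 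Together these yield the crucial separation $c_k\ge 0>b_k$, which I will exploit to keep the deformation away from $\partial D_k$.

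Now suppose $c_k$ is not a critical value of $\Psi_\theta$, so that $K_{c_k}=\emptyset$. Because $\Psi_\theta$ is even and satisfies $\textup{(PS)}_{c_k}$, the standard equivariant deformation lemma (e.g.\ \cite[Lemma~1.3]{AR}) produces, for each sufficiently small $\epsilon>0$, an odd homeomorphism $\eta:X\to X$ such that
$$\eta\bigl(\{\Psi_\theta\le c_k+\epsilon\}\bigr)\subset \{\Psi_\theta\le c_k-\epsilon\}\ \text{ and }\ \eta(u)=u\ \text{ whenever }\ \Psi_\theta(u)\notin(c_k-2\epsilon,c_k+2\epsilon).$$
Choosing $\epsilon<(c_k-b_k)/4$ forces $b_k<c_k-2\epsilon$, so in particular $\eta$ is the identity on $\partial D_k$.

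To finish, by the definition of $c_k$ pick $g\in G_k$ with $\max_{u\in D_k}\Psi_\theta(g(u))<c_k+\epsilon$ and set $\tilde g:=\eta\circ g$. Then $\tilde g$ is continuous and odd, and on $\partial D_k$ we have $\tilde g(u)=\eta(g(u))=\eta(u)=u$, so $\tilde g\in G_k$. The inclusion above gives $\max_{u\in D_k}\Psi_\theta(\tilde g(u))\le c_k-\epsilon$, contradicting the definition of $c_k$. The only delicate point in the whole argument is verifying the separation $c_k>b_k$ that keeps $\eta$ pinned to the identity on the boundary constraint defining $G_k$; once that is in hand, the rest is a routine application of the equivariant deformation, so this is where I expect the main (mild) obstacle to lie.
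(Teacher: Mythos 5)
Your overall strategy is exactly the one the paper intends (its proof is just a citation of the Ambrosetti--Rabinowitz deformation argument): argue by contradiction with an odd deformation that fixes $\partial D_k$, which requires the strict separation $\max_{\partial D_k}\Psi_\theta<c_k$. The gap is in how you obtain that separation, i.e.\ precisely at the point you yourself flag as delicate. Lemma~\ref{Le.Super.Rk} gives $\Psi_\theta<0$ only on the open set $\{u\in X_k:\ \|u\|>R_k\}$, and continuity transfers this to the sphere $\|u\|=R_k$ only as $\Psi_\theta\le 0$: the inference ``$b_k:=\max_{\partial D_k}\Psi_\theta<0$ by continuity'' is invalid (a function can be strictly negative for $\|u\|>R_k$ and vanish on $\|u\|=R_k$). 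Hence the separation $c_k\ge 0>b_k$ is not established, and your argument does not cover the degenerate possibility $c_k=b_k=0$, in which case the deformation cannot be forced to equal the identity on $\partial D_k$.

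The repair is small. If $c_k>0$, then $c_k>0\ge b_k$ and your deformation argument runs verbatim with $\epsilon<c_k/2$. If $c_k=0$, no deformation is needed: by $(\mathcal{F}_1)$ one has $f(x,0)=0$, and also $A(x,0)=0$, $B(x,0)=0$, so $\Psi_\theta'(0)=0$ and $\Psi_\theta(0)=0$, i.e.\ the level $0$ is automatically a critical value. (Alternatively, the explicit estimate in the proof of Lemma~\ref{Le.Super.Rk} is in fact valid for all $\|u\|\ge R_k$, which would give strict negativity on $\partial D_k$; but that uses the proof of that lemma, not its statement, so a blind argument should not rely on it.) Everything else in your write-up — $c_k\ge 0$ from $g(0)=0$ and $\Psi_\theta(0)=0$, the oddness of the deformation for the even functional, the verification that $\eta\circ g\in G_k$, and the contradiction with the infimum defining $c_k$ — is correct and coincides with the cited Ambrosetti--Rabinowitz scheme; note also that in the paper the lemma is only applied at levels $c_{k_n}>0$, where your argument, using merely $b_k\le 0$, already suffices.
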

Before completing the proof of Theorem~\ref{Theo.sl.M=1}, we introduce a sequence  $\{Z_k\}_{k\in\N}$ of closed linear subspaces of $X$ with finite codimensions, which will be utilized to prove the unboundedness of $\{c_k\}_{k\in\N}$.  Let $\{e_j\}_{j\in\N}\subset X$ be a Schauder basis of $X$. For each $n\in\mathbb{N},$ let $f_n\in X^*$ be defined as
$$\langle f_n,u\rangle=\alpha_n\quad \text{for}\ \ u=\sum_{j=1}^\infty\alpha_je_j\in X.$$
For each $k\in\mathbb{N},$ define
$$Y_k:=\{u\in X:\ \langle f_n,u\rangle=0,\ \ \forall\, n\geq k\}$$
and 
$$Z_k:=\{u\in X:\ \langle f_n,u\rangle=0,\ \ \forall\, n\leq k-1\}.$$
Then, $X=Y_k \oplus Z_k$ and $Z_k$ has codimension $k-1$. Define
\begin{equation}\label{PT.super1.delta_k}
	\delta_k:=\underset{\|v\|\leq 1}{\underset{v\in Z_k}{\sup}}\, \, \|v\|_{\alpha(\cdot)}.
\end{equation}
The next lemma can be proved in the same fashion as \cite[Lemma 4.5]{HHS22}.
\begin{lemma}\label{PL.Super.delta_k}
	The sequence $\{\delta_k\}_{k\in\N}$ above satisfies $0< \delta_{k+1}\leq\delta_k$ for all $k\in\mathbb{N}$ and
	\begin{equation}\label{deltak_to_0.V2}
		\lim_{k\to\infty}\delta_k=0.
	\end{equation} 
\end{lemma}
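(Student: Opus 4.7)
The plan is a standard weak-compactness argument of Fountain-theorem flavor, relying on three ingredients: the nested structure of the spaces $Z_k$, the reflexivity of $X$, and the compact embedding $X\hookrightarrow\hookrightarrow L^{\alpha(\cdot)}(\Omega)$ supplied by Proposition~\ref{prop_embs}(ii). Note that the assumptions $(\mathcal{H}_A)$ and $(\mathcal{H}_B)$ give $\alpha(x)\leq \alpha^+<r_1^-\leq p^\ast(x)$ on $\overline{\Omega}$, so this compactness is indeed available and will power the ``$\delta_k\to 0$'' conclusion.

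First I would dispose of the monotonicity and positivity. Directly from the definition, $Z_{k+1}\subset Z_k$, so taking suprema over a smaller set yields $\delta_{k+1}\leq\delta_k$. Since $Z_k$ has codimension $k-1$ in $X$ it is infinite dimensional, hence contains some nonzero $v$; normalizing and using that the continuous embedding $X\hookrightarrow L^{\alpha(\cdot)}(\Omega)$ sends nonzero elements to nonzero elements yields $\delta_k>0$.

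For the main assertion, monotonicity already gives $\delta_k\downarrow\delta\geq 0$, and I would argue by contradiction assuming $\delta>0$. For each $k$, choose $v_k\in Z_k$ with $\|v_k\|\leq 1$ and $\|v_k\|_{\alpha(\cdot)}>\delta_k/2\geq \delta/2$. Reflexivity of $X$ then produces a subsequence with $v_k\rightharpoonup v$ in $X$. The identification $v=0$ is the only real content of the proof: each coordinate functional $f_n$ is continuous on $X$ (a classical feature of Schauder bases), and $\langle f_n,v_k\rangle=0$ for every $k>n$, so passing to the weak limit forces $\langle f_n,v\rangle=0$ for every $n$; the Schauder expansion $v=\sum_{n} \langle f_n,v\rangle e_n$ then yields $v=0$. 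Invoking the compact embedding $X\hookrightarrow\hookrightarrow L^{\alpha(\cdot)}(\Omega)$, one obtains $\|v_k\|_{\alpha(\cdot)}\to 0$, which contradicts $\|v_k\|_{\alpha(\cdot)}>\delta/2$ and proves \eqref{deltak_to_0.V2}.

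The only delicate point, such as it is, lies in this identification of the weak limit as $0$, and it is handled cleanly by the continuity of the biorthogonal functionals together with the uniqueness of the Schauder expansion; no appeal to James's theorem or to the fact that $\{f_n\}$ itself is a Schauder basis of $X^\ast$ is required.
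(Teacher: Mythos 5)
Your proof is correct and follows exactly the standard weak-compactness argument; the paper does not write out a proof at all but delegates it to \cite{HHS}, whose Lemma 4.5 proceeds along the same lines (nestedness of the $Z_k$, reflexivity of $X$, continuity of the coordinate functionals $f_n\in X^*$, identification of the weak limit as $0$, and the compact embedding into $L^{\alpha(\cdot)}(\Omega)$), so your write-up simply supplies the details the paper outsources. One small correction to your justification of the compactness: $(\mathcal{F}_0)$ only gives $\alpha(x)<r_1(x)$ pointwise, which does not yield the chain $\alpha^+<r_1^-$ you wrote; however, since $\alpha(x)<r_1(x)\leq p^\ast(x)$ for every $x\in\overline{\Omega}$, Proposition~\ref{prop_embs}\,(ii) still applies and gives the compact embedding $\Wpzero{\mathcal{H}}\hookrightarrow\hookrightarrow L^{\alpha(\cdot)}(\Omega)$, so the argument is unaffected.
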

Finally, we complete the proofs of Theorems~\ref{Theo.sl.M=1} and \ref{Theo2.sl.M=1}.
\begin{proof}[\textbf{Proof of Theorem~\ref{Theo.sl.M=1}}]
	Let $\theta> 0$ and define $\{c_k\}_{k\in\N}$ as in \eqref{PT.Super.ck}. Let $k\in\N.$ We will obtain an upper bound and a lower bound for $c_k$ as follows. Since $\operatorname{id}\in G_k,$ the definition \eqref{PT.Super.ck} of $c_k$ yields
	\begin{equation}\label{PT.Super.Est-ck-1}
		c_k\leq \max_{u\in D_k}\, \Psi_\theta(u).
	\end{equation}
On the other hand, by means of the estimate \eqref{F} and Propositions~\ref{prop.nor-mod.D}-\ref{prop_embs}, for all $u \in D_k$ we have
\begin{align*}
	\notag
	\Psi_\theta(u)&\leq \frac{1}{p^-}\int_\Omega \mathcal{H}(x,|\nabla u|)\diff x-\int_{\Omega}F(x,u)\diff x\\
	\notag&\leq \frac{1}{p^-}\round{\|u\|^{q^+}+1}+2C_1\int_\Omega \left(|u|^{\alpha(x)}+1\right)\diff x\\
	\notag&\leq \frac{1}{p^-}\|u\|^{q^+}+2C_1\left(\|u\|_{\alpha(\cdot)}^{\alpha^+}+1\right)+2C_1|\Omega|+\frac{1}{p^-}\\
\notag	&\leq \frac{1}{p^-}\|u\|^{q^+}+2C_9\|u\|^{\alpha^+}+2C_1\left(|\Omega|+1\right)+\frac{1}{p^-}\\
	&\leq  \frac{1}{p^-}R_k^{q^+}+2C_9R_k^{\alpha^+}+2C_1\left(|\Omega|+1\right)+\frac{1}{p^-}=:C_*(R_k).
\end{align*}
Combining this with \eqref{PT.Super.Est-ck-1} gives
\begin{equation}\label{PT.Super.Est-ck.L}
	c_k \leq C_*(R_k).
\end{equation}
To get a lower bound for $c_k$, we make use of the following fact, which is a consequence of \cite[Lemma 3.9]{KK16}:
$$g(D_k)\cap \partial B_\tau \cap Z_k \neq \emptyset\, \text{ for all } g\in G_k \text{ and all } \tau \in (0,R_k).$$
Thus, for any $\tau \in (0,R_k)$, we have
\begin{equation*}
\max_{u\in D_k}\Psi_\theta (g(u)) \geq\, \inf_{u\in \partial B_\tau\cap Z_k}\Psi_\theta(u),\quad \forall g\in G_k.
\end{equation*}
Let $\tau\in (1,R_k)$ be arbitrary and fixed. From the last inequality and the definition \eqref{PT.Super.ck} of $c_k$ we obtain
\begin{equation}\label{PT.Super.low.est.ck}
	c_k\geq\, \inf_{u\in \partial B_\tau\cap Z_k}\Psi_\theta(u).
\end{equation}
By means of \eqref{F} and Propositions~\ref{prop.nor-mod.D} again, for any $u\in \partial B_\tau\cap Z_k,$ we have
\begin{align}\label{PT.Super.est1I}
	\notag
	\Psi_\theta(u)&\geq\frac{1}{q^+}\left(\|u\|^{p^-}-1\right)-2C_1\int_\Omega |u|^{\alpha(x)}\diff x-2C_1|\Omega| -\frac{\theta}{r_1^-}\int_\Omega \mathcal{B}(x,u)\diff x\\ 
	&\geq \frac{1}{q^+}\|u\|^{p^-}-2C_1\|u\|_{\alpha(\cdot)}^{\alpha^+}-\frac{\theta}{r_1^-}\int_\Omega \mathcal{B}(x,u)\,\diff x -2C_1(|\Omega|+1)-\frac{1}{q^+}.
	\end{align}
Note that for $\|u\|=\tau>1$ it follows from  Proposition~\ref{prop.nor-mod.D} and \eqref{S} that
\begin{equation}\label{PT.Super.est1I'}
	\int_\Omega \mathcal{B}(x,u)\diff x\leq \left(1+S^{-r_2^+}\right)\|u\|^{r_2^+}.
\end{equation}
 Because of \eqref{deltak_to_0.V2}, we can find  $k_0\in\mathbb{N}$ such that $\delta_k<1<\frac{p^-}{4C_1q^+\alpha^+\delta_k^{\alpha^+}}$ for all $k\geq k_0.$ Moreover, \eqref{PT.super1.delta_k} implies that
$$\|u\|_{\alpha(\cdot)} \leq \delta_k \|u\|,\quad \forall u\in Z_k.$$
 Combining this with \eqref{PT.Super.est1I}-\eqref{PT.Super.est1I'} we infer that for all $k\geq k_0,$
\begin{align*}\label{PT.Super.est2I}
	\Psi_\theta(u)\geq\frac{1}{q^+}\tau^{p^-}-2C_{1}\delta_k^{\alpha^+}\tau^{\alpha^+} -\frac{ \left(1+S^{-r_2^+}\right)\theta}{r_1^-}\tau^{r_2^+}-2 C_1(|\Omega|+1)-\frac{1}{q^+},\quad \forall u\in \partial B_\tau\cap Z_k.
\end{align*}
 From this and \eqref{PT.Super.low.est.ck} we obtain that for $k\geq k_0,$
\begin{equation}\label{PT.Super.est3I}
	c_k\geq \frac{1}{q^+}\tau^{p^-}-2C_{1}\delta_k^{\alpha^+}\tau^{\alpha^+} -\frac{ \left(1+S^{-r_2^+}\right)\theta}{r_1^-}\tau^{r_2^+}-2C_1(|\Omega|+1)-\frac{1}{q^+},\quad \forall \tau\in (1,R_k).
\end{equation}
Note that for $\theta$ satisfying
\begin{equation*}\label{PT.Super.theta}
	0< \theta\leq \frac{r_1^-}{2q^+(1+S^{-r_2^+})}R_k^{p^--r_2^+}=:C^\ast(R_k),
\end{equation*}
we get
\begin{equation*}
	\frac{ \left(1+S^{-r_2^+}\right)\theta}{r_1^-}\tau^{r_2^+}\leq \frac{1}{2q^+}\tau ^{p^-},\quad \forall \tau \in (1,R_k).
\end{equation*}
Then, \eqref{PT.Super.est3I} implies that for any $k\geq k_0,$ 
\begin{align}\label{PT.Super.Est-ck}
	c_k&\geq\frac{1}{2q^+}\tau^{p^-}-2C_{1}\delta_k^{\alpha^+}\tau^{\alpha^+}-2C_1(|\Omega|+1)-\frac{1}{q^+}=:h(\tau),\quad \forall \tau\in (1,R_k).
\end{align}
By choosing $R_k$ in Lemma~\ref{Le.Super.Rk} larger if necessary, we may assume that for each $k\in\N$, $R_k>\tau_0:=\left(\frac{p^-}{4C_1q^+\alpha^+\delta_k^{\alpha^+}}\right)^{\frac{1}{\alpha^+-p^-}}$. Hence,
 for any $k\geq k_0,$ \eqref{PT.Super.Est-ck} implies that 
\begin{align}\label{PT.Super.Est-ck.U}
	c_k\geq& \max\limits_{\tau \in (1,R_k)}h(\tau)=h(\tau_0)=a_0\delta_k^{-\frac{\alpha^+ p^-}{\alpha^+-p^-}}+b_0,
\end{align}
provided $\theta\in (0,C^\ast(R_k))$,
where
$$a_0:=\left(\frac{p^-}{4C_1q^+\alpha^+}\right)^{\frac{p^-}{\alpha^+-p^-}}\frac{\alpha^+-p^-}{2q^+\alpha^+}>0\ \text{ and } \ b_0:=-2C_1(|\Omega|+1)-\frac{1}{q^+}.$$

We now prove the conclusion of Theorem~\ref{Theo.sl.M=1} by picking a sequence $\{\theta_n\}_{n\in\N}$ as follows. By Lemma~\ref{PL.Super.delta_k}, we find $k_1\geq k_0$ such that
\begin{equation*}
a_0\delta_{k_1}^{-\frac{\alpha^+ p^-}{\alpha^+-p^-}}+b_0>0.
\end{equation*}
Then, choose $\theta_1$ satisfying
\begin{equation}\label{theta1}
	\begin{cases}
		\theta_1\in (0, C^\ast(R_{k_1})]\\
		C_\ast(R_{k_1})<\left(\frac{1}{\beta} - \frac{1}{r_1^-}\right)\min\{S^{n_1},S^{n_2}\}\min\{\theta_1^{-\tau_1},\theta_1^{-\tau_2}\}-\frac{\|e\|_1}{\beta}.
	\end{cases}
\end{equation}
Note that the choice of $\theta_1$ and \eqref{PT.Super.Est-ck.U} yield
\begin{equation}\label{PT.Super.Est-ck1}
	c_{k_1}\geq a_0\delta_{k_1}^{-\frac{\alpha^+ p^-}{\alpha^+-p^-}}+b_0>0,\quad \forall \theta\in (0,\theta_1),
\end{equation}
and $\Psi_\theta$ satisfies $\textup{(PS)}_{c_{k_1}}$ thanks to \eqref{PT.Super.Est-ck.L}, \eqref{theta1} and Lemma~\ref{Le.PS4.c}.
Inductively, for $\theta_n$ satisfying
\begin{equation}\label{PT.Supper.theta_n}
	\begin{cases}
	\theta_n \in (0,C^*(R_{k_n})], \\
	C_*(R_{k_n})<\left(\frac{1}{\beta} - \frac{1}{r_1^-}\right)\min\{S^{n_1},S^{n_2}\}\min\{\theta_n^{-\tau_1},\theta_n^{-\tau_2}\}-\frac{\|e\|_1}{\beta},
	\end{cases}
\end{equation}
we take $k_{n+1}\in\mathbb{N}$, $k_{n+1}>k_n$ such that
\begin{align}\label{PT.Super.delta_k_n+1}
a_0\delta_{k_{n+1}}^{-\frac{\alpha^+ p^-}{\alpha^+-p^-}}+b_0>\left(\frac{1}{\beta} - \frac{1}{r_1^-}\right)\min\{S^{n_1},S^{n_2}\}\min\{\theta_n^{-\tau_1},\theta_n^{-\tau_2}\}-\frac{\|e\|_1}{\beta},
\end{align}
and then pick $\theta_{n+1}\in (0,\theta_{n})$ satisfying
\begin{equation}\label{PT.Supper.theta_n+1}
	\begin{cases}
		\theta_{n+1}\in (0, C^*(R_{k_{n+1}})]\\
	C_*(R_{k_{n+1}})<\left(\frac{1}{\beta} - \frac{1}{r_1^-}\right)\min\{S^{n_1},S^{n_2}\}\min\{\theta_{n+1}^{-\tau_1},\theta_{n+1}^{-\tau_2}\}-\frac{\|e\|_1}{\beta}.
	\end{cases}
\end{equation}
 Now, let $\theta\in (0, \theta_n)$ for some $n\in\mathbb{N}.$ By \eqref{PT.Super.Est-ck.L} and \eqref{PT.Super.Est-ck.U}-\eqref{PT.Supper.theta_n+1} we have
\begin{equation*}
	0<c_{k_1}<c_{k_2}<\cdots<c_{k_n}\leq C_*(R_{k_n}).
\end{equation*} 
From this and \eqref{PT.Supper.theta_n} we arrive at
\begin{equation*}
	0<c_{k_1}<c_{k_2}<\cdots<c_{k_n}<\left(\frac{1}{\beta} - \frac{1}{r_1^-}\right)\min\{S^{n_1},S^{n_2}\}\min\{\theta^{-\tau_1},\theta^{-\tau_2}\}-\frac{\|e\|_1}{\beta}.
\end{equation*} 
Thus, $c_{k_1},\, c_{k_2},\, \cdots,\, c_{k_n}$ are distinct critical values of  $\Psi_\theta$ in view of Lemma~\ref{Le.PS4.c} and Lemma~\ref{PL.Super.ck}. Hence,  $\Psi_\theta$ has at least $n$ distinct pairs of critical points. The proof is complete.
\end{proof}

\begin{proof}[\textbf{Proof of Theorem~\ref{Theo2.sl.M=1}}] The proof is almost identical with that of  Theorem~\ref{Theo.sl.M=1} above. The only difference is that thanks to $(\mathcal{F}_5)$, we do not need to make use of $(\mathcal P)$ in the proof of Lemma~\ref{Le.PS4.c}.
	
\end{proof}

	\vspace{0.5cm}
	\noindent{\bf Acknowledgment}\\
	
	%\medskip
	Ky Ho was partially supported by the National Research Foundation of Korea (NRF) grant funded by the Korea government (MSIT) (grant No. 2022R1A4A1032094).

\vspace{0.5cm}
\noindent{\bf Declaration of generative AI and AI-assisted technologies in the writing process}\\	
	
	During the preparation of this work the authors used QuillBot in order to improve language of some sentences in the Introduction. After using this tool, the authors reviewed and edited the content as needed and take full responsibility for the content of the publication.
	
\medskip


\begin{thebibliography}{99}
	
	%\bibitem{Alves1} C.O.  Alves, J.L.P. Barreiro,
	%{Existence and multiplicity of solutions for a $p(x)$-Laplacian equation with critical growth}, J. Math. Anal. Appl. 403 (2013) 143--154.
	
	\bibitem{AR} A. Ambrosetti, P.H. Rabinowitz,
	Dual variational methods in critical point theory and applications, J. Funct. Anal. 14 (1973) 349--381.
	
	\bibitem{AFMW22}
	R. Arora, A. Fiscella, T. Mukherjee, P. Winkert, 
	  On critical double phase Kirchhoff problems with singular
		nonlinearity, Rend. Circ. Mat. Palermo (2) 71 (2022) 1079--1106.
	
	\bibitem{Aub76}T. Aubin, Equations diff\'erentielles non lin\'eaires et probl\`eme de Yamabe concernant la courbures calaire, J. Math. Pures et Appl. 55 (1976) 269--293.
	
	\bibitem{APS10}
	G. Autuori, P. Pucci, M. Salvatori,
	Global nonexistence for nonlinear Kirchhoff systems,
	Arch. Ration. Mech. Anal. 196 (2) (2010) 489--516.
	
		\bibitem{Bah-Rad-Rep.19} A. Bahrouni, V.D. R\u{a}dulescu,  D.D. Repov\v{s},
	Double phase transonic flow problems with variable growth: nonlinear
		patterns and stationary waves, Nonlinearity 32 (7) (2019) 2481–2495.
	
	\bibitem{BCM15}
	P. Baroni, M. Colombo, G. Mingione,
	 Harnack inequalities for double phase functionals,
	Nonlinear Anal. 121 (2015) 206--222.
	
	\bibitem{BCM16}
	P. Baroni, M. Colombo, G. Mingione,
	 Non-autonomous functionals, borderline cases and related function classes,
	St. Petersburg Math. J. 27 (2016) 347--379.
	
	\bibitem{BCM18}
	P. Baroni, M. Colombo, G. Mingione,
	 Regularity for general functionals with double phase,
	Calc. Var. Partial Differential Equations 57 (2) (2018), Art. 62, 48 pp.
	
	\bibitem{BKM15}
	P. Baroni, T. Kuusi, G. Mingione,
	 Borderline gradient continuity of minima,
	J. Fixed Point Theory Appl. 15 (2) (2014), 537--575.
	
	
	
	%\bibitem{Bahrouni-Radulescu-Winkert-2020} 	A. Bahrouni, V.D. R\u{a}dulescu, P. Winkert, 	{ Double phase problems with variable growth and convection for the Baouendi-Grushin operator}, Z. Angew. Math. Phys. { 71} (2020) (6), 14 pp.
	
	%\bibitem{BR2016} S. Baraket, V. R\u{a}dulescu, 	{Combined effects of concave-convex nonlinearities in a fourth-order problem with variable exponent}, Adv. Nonlinear Stud. 16 (3) (2016) 409--419.
	
	%\bibitem{Beck-Mingione-2019} 	L. Beck, G. Mingione, 	{ Optimal {L}ipschitz criteria and local estimates for non-uniformly elliptic problems}, 	Atti Accad. Naz. Lincei Rend. Lincei Mat. Appl. { 30} (2019) 223--236.
	
%	\bibitem{Beck-Mingione-2020} L. Beck, G. Mingione, 	{ Lipschitz bounds and nonuniform ellipticity}, Comm. Pure Appl. Math. { 73} (2020)  944--1034.
		
	\bibitem{BDFP00}
	V. Benci, P. D'Avenia, D. Fortunato, L. Pisani, 
	Solitons in several space dimensions: Derrick’s problem and infinitely
		many solutions, Arch. Ration. Mech. Anal. 154 (4) (2000) 297–324.
			
	\bibitem{BGP96} F. Bernis, J. Garcia Azorero, I. Peral Alonso, 
	Existence and multiplicity of nontrivial solutions in semilinear critical problems of fourth-order, Adv. Diff. Equ. 01 (1996) 219--240.
	
	\bibitem{BS10} J.F. Bonder, A. Silva,
	Concentration-compactness principle for variable exponent spaces and applications,  Electron. J. Differ. Equ. 141 (2010) 1--18.
	
	\bibitem{Bre.Book} H. Brezis,
	Functional analysis, Sobolev spaces and partial differential equations, in: Universitext, Springer, New York, 2011.
	
	\bibitem{BN83} H. Brezis, L. Nirenberg, 
	Positive solutions of nonlinear elliptic equations involving critical Sobolev exponents, Comm. Pure Appl. Math.  36 (4) (1983) 437--477.
	
	\bibitem{BO20}
	S.-S. Byun, J. Oh,
	 Regularity results for generalized double phase functionals,
	Anal. PDE { 13} (5) (2020)  1269--1300.
	
	
	
	%\bibitem{Brezis} H. Brezis, L. Nirenberg, 
	% {Positive solutions of nonlinear elliptic equations involving critical Sobolev exponents}, Comm. Pure Appl. Math., 36 (4)  (1983), 437--477.
	
		
	%\bibitem{CC2015} F.J.S.A. Correa, A.C.R. Costa, 
	% {On a $p(x)$-Kirchhoff equation with critical exponent and an additional nonlocal term via truncation argument}, Math. Nachr. 288 (11-12) (2015) 1226--1240.
	
	\bibitem{CI05} L. Cherfils, Y. Il’yasov, On the stationary solutions of generalized reaction diffusion equations with $p\&q$-Laplacian,
	Commun. Pure Appl. Anal. 4 (1) (2005) 9–22.
	
	\bibitem{CH21} N.T. Chung, K. Ho,  On a $p(\cdot)$-biharmonic problem of Kirchhoff type involving critical growth, Appl. Anal.  101 (16) (2022) 5700--5726.
	
	
	\bibitem{CS16}
	F. Colasuonno, M. Squassina,
	 Eigenvalues for double phase variational integrals,
	Ann. Mat. Pura Appl.  195 (6) (2016)  1917--1959.
	
	\bibitem{CM15a}
M. Colombo, G. Mingione,
Bounded minimisers of double phase variational integrals,
Arch. Ration. Mech. Anal.  218 (1) (2015) 219--273.

	\bibitem{CM15b}
	M. Colombo, G. Mingione,
	 Regularity for double phase variational problems,
	Arch. Ration. Mech. Anal. 125 (2) (2015) 443--496.
	

		
	\bibitem{CGHW}
	{\'A}. Crespo-Blanco, L. Gasi{\'n}ski, P. Harjulehto,  P. Winkert,
	  A new class of double phase variable exponent problems: Existence and
		uniqueness, J. Differential Equations 323 (2022) 182--228. 
	
	\bibitem{DS92}
	P. D'Ancona, S. Spagnolo,
	Global solvability for the degenerate Kirchhoff equation with real analytic data, Invent. Math. 108 (2) (1992) 247--262.
	
	\bibitem{DeF18}
	C. De Filippis,
	Higher integrability for constrained minimizers of integral functionals with {$(p,q)$}-growth in low dimension,
	Nonlinear Anal. { 170} (2018),  1--20.
	
	%\bibitem{De-Filippis-Mingione-2020} C. De Filippis, G. Mingione, {On the regularity of minima of non-autonomous functionals}, J. Geom. Anal. { 30} (2020), no. 2,  1584--1626.
	
	%\bibitem{DeFilippis-Mingione-ARMA-2021} C. De Filippis, G. Mingione, 	{Lipschitz bounds and nonautonomous integrals}, Arch. Ration. Mech. Anal. { 242} (2021) 973--1057.
	
	%\bibitem{DeFilippis-Mingione-2021} C. De Filippis, G. Mingione, {Interpolative gap bounds for nonautonomous integrals}, Anal. Math. Phys. { 11} (3) (2021), Paper No. 117, 39 pp.
	
%	\bibitem{DeFilippis-Mingione-JGA-2020} C. De Filippis, G. Mingione,	{Manifold constrained non-uniformly elliptic problems},	J. Geom. Anal. 30 (2) (2020)  1661--1723.
	
	%\bibitem{DeFilippis-Mingione-JGA-2020-2} C. De Filippis, G. Mingione,	{ On the regularity of minima of non-autonomous functionals}, J. Geom. Anal. { 30} (2) (2020) 1584--1626.
	
	\bibitem{DHHR}
	L.~Diening, P.~Harjulehto, P.~H\"{a}st\"{o}, M.~R$\mathring{\text{u}}$\v{z}i\v{c}ka,
	``Lebesgue and Sobolev Spaces with Variable Exponents'',
	Springer, Heidelberg, 2011.
	
	\bibitem{FZZ05} X. Fan, Q. Zhang, D. Zhao, 
	Eigenvalues of $p(x)$-Laplacian Dirichlet problem, J. Math. Anal. Appl. 302 (2005) 306--317.
	
	\bibitem{FFW22} C. Farkas,  A. Fiscella, P. Winkert, On a class of critical double phase problems, J. Math. Anal. Appl. 515 (2022) 126420.
	
	\bibitem{FW21}
	C. Farkas, P. Winkert,
	 An existence result for singular {F}insler double phase problems,
	J. Differential Equations 286 (2021) 455--473.
		
	\bibitem{FMPV23} A.  Fiscella, G.  Marino, A.  Pinamonti,  S.  Verzellesi, 
	 Multiple solutions for nonlinear boundary value problems of Kirchhoff type on a double phase setting, Rev. Mat. Complut.  (2023), https://doi.org/10.1007/s13163-022-00453-y.
	
	\bibitem{FP20} A.  Fiscella,  A.  Pinamonti,
	 Existence and multiplicity results for Kirchhoff type problems on a double phase setting, Mediterr. J. Math. 20 (2023) 33.
		
		
	%\bibitem{F2013} G.M. Figueiredo,  
	%{Existence of a positive solution for a Kirchhoff problem type with critical growth via truncation argument}, J. Math. Anal. Appl. 401 (2013) 706--713.
	
	\bibitem{FN16} G.M. Figueiredo, R.G. Nascimento, 
	Multiplicity of solutions for equations involving a nonlocal term and the biharmonic operator, Electron. J. Differ. Equ. 2016 (217) (2016) 1--15.
	
	\bibitem{FL07} I. Fonseca, G. Leoni,
	Modern Methods in the Calculus of Variations: $L^p$ Spaces, Springer, New York, 2007.
	
	\bibitem{Fu09} Y. Fu, 
	The principle of concentration compactness in $L^{p(x)}$ spaces and its application, Nonlinear Anal. 71 (2009) 1876--1892.
	
	\bibitem{FZ10}Y. Fu, X. Zhang,
	Multiple solutions for a class of p(x)-Laplacian equations in involving the critical exponent, Proc. R. Soc. Lond. Ser. A Math. Phys. Eng. Sci. 466 (2118) (2010) 1667--1686.
	
	%\bibitem{Garcia} J. Garc\'{\i}a Azorero, I. Peral Alonso,
%	{Multiplicity of solutions for elliptic problems with critical exponent or with a nonsymmetric term}, Trans. Amer. Math. Soc. 323 (2) (1991) 877--895.
	
	\bibitem{GP19}
	L. Gasi\'nski, N.S. Papageorgiou,
	 Constant sign and nodal solutions for superlinear double phase problems,
	Adv. Calc. Var. 14 (4) (2021) 613--626.
	
	\bibitem{GW20a}
	L. Gasi\'{n}ski, P. Winkert,
	 Constant sign solutions for double phase problems with superlinear nonlinearity,
	Nonlinear Anal. { 195} (2020), 111739.
	
	\bibitem{GW20b}
	L. Gasi\'nski, P. Winkert,
	 Existence and uniqueness results for double phase problems with convection term,
	J. Differential Equations { 268} (8) (2020)   4183--4193.
	
	\bibitem{GW21}
	L. Gasi\'nski, P. Winkert,
	 Sign changing solution for a double phase problem with nonlinear boundary condition via the Nehari manifold,
	J. Differential Equations { 274} (2021) 1037--1066.
	
	\bibitem{HHS22} H.H. Ha, K. Ho, I. Sim,
	 Infinitely many solutions for a generalized $p(\cdot)$‐Laplace equation involving Leray-Lions type operators, 	Math. Methods Appl. Sci. 45 (14) (2022) 8450--8473.
	
	%\bibitem{Harjulehto-Hasto-2019} P. Harjulehto, P. H\"{a}st\"{o}, {Orlicz Spaces and Generalized {O}rlicz Spaces}, Springer, Cham, 2019.
	
	\bibitem{HHT17}
	P. Harjulehto, P. H\"{a}st\"{o}, O. Toivanen,
	H\"{o}lder regularity of quasiminimizers under generalized growth conditions, Calc. Var. Partial Differential Equations 56 (2) (2017) Paper No. 22, 26pp.
	
	%\bibitem{Hasto-Ok-2019} P. H\"{a}st\"{o}, J. Ok,  {Maximal regularity for local minimizers of non-autonomous functionals}, J. Eur. Math. Soc. 4 (2021) 1285--1334.
	
%	\bibitem{Ho-Kim-Winkert-Zhang-2020} K. Ho, Y.-H. Kim, P. Winkert, C. Zhang, {The boundedness and {H}\"{o}lder continuity of weak solutions to elliptic equations involving variable exponents and critical growth}, J. Differential Equations 313 (2022) 503--532.
	
	%\bibitem{ho2022sufficient} K. Ho, I. Sim, { On sufficient ``local" conditions for existence results to generalized $ p (\cdot) $-Laplace equations involving critical growth}, Adv. Nonlinear Anal. 12 (2022) 182-209.
	
	\bibitem{HK21} K. Ho, Y.-H. Kim, The concentration-compactness principles for $W^{s,p(\cdot)}(\mathbb{R}^N)$ and application,
	Adv. Nonlinear Anal. 10 (2021) 816--848.
	
	\bibitem{HKL22} K. Ho, Y.-H. Kim, J. Lee, Schrodinger $p(\cdot)$-Laplace equations in $\R^N$ involving indefinite weights and critical growth, J. Math. Phys. 62 (2021) 111506.
	
	\bibitem{HS16} K. Ho, I. Sim,
	On degenerate $p(x)$-Laplace equations involving critical growth with two parameters, Nonlinear Anal. 132 (2016) 95--114.
	
	\bibitem{HW22} K. Ho, P. Winkert,  New embedding results for double phase problems with variable exponents and a priori bounds for corresponding generalized double phase problems, preprint,
	https://arxiv.org/abs/2208.00504.
	
	\bibitem{HMR17} E.J. Hurtado, O.H. Miyagaki, R.S. Rodrigues, 
	Existence and asymptotic behaviour for a Kirchhoff type equation with variable critical growth exponent, Milan J. Math. 85 (2017) 71--102.
	
	\bibitem{KKOZ22}
	I.H. Kim, Y.-H. Kim, M.W. Oh, S. Zeng,
	Existence and multiplicity of solutions to concave-convex type double-phase problems with variable exponent, Nonlinear Anal. Real World Appl. 67 (2022) 103627.
	
	\bibitem{Kir1876} G.R. Kirchhoff,
	Vorlesungen \"uber Mathematische Physik, Mechanik,
	Teubner, Leipzig, 1876.
	
	\bibitem{KK16}Y. Komiya, R. Kajikiya, 
	Existence of infinitely many solutions for the $(p,q)$-Laplace equation, NoDEA Nonlinear Differential Equations Appl. 23 (2016) Paper No. 49,  23 pp.
	
	
	%\bibitem{K2015} L. Kong, {Eigenvalues for a fourth order elliptic problem}, Proc. Amer. Math. Soc. 143 (2015) 249--258.
	
	%\bibitem{Kovacik} O. Kov\u{a}\v{c}ik, J. R\u{a}kosnik,
	% {On spaces $L^{p(x)}$ and $W^{k,p(x)}$}, Czechoslovak Math. J., 41 (116) (1991), 592--618.
	
	%\bibitem{LZ2016} S. Liang, Z. Zhang, {Existence and multiplicity of solutions for fourth-order elliptic equations of Kirchhoff type with critical growth in $\R^N$}, J. Math. Phys. 57 (2016), 111505 pp.
	
	\bibitem{Lions85} P.L. Lions,
	The concentration-compactness principle in the calculus of variations. The limit case, Part 1, Rev. Mat. Iberoamericana 1 (1) (1985) 145--201.
	
	\bibitem{LD18}
	W. Liu, G. Dai, Existence and multiplicity results for double phase problem, J. Differential Equations 265 (9) (2018) 4311--4334.
	
	\bibitem{LD20}
	W. Liu, G. Dai, Multiplicity results for double phase problems in $\mathbb{R}^N$, J. Math. Phys. 61 (2020)  091508.
	
		%\bibitem{M2007} T.F. Ma, 	{Positive solutions for a nonlocal fourth order equation of Kirchhoff type}, Discrete Contin. Dyn. Syst. Suppl. (2007) 145--201.
	
	\bibitem{Mar89b}
	P. Marcellini,
	 Regularity of minimizers of integrals of the calculus of variations 
		with nonstandard growth conditions,
	Arch. Rational Mech. Anal.  105 (3) (1989)  267--284.
	
	\bibitem{Mar91}
	P. Marcellini,
	 Regularity and existence of solutions of elliptic equations with {$p,q$}-growth conditions,
	J. Differential Equations 90 (1) (1991) 1--30.
	
	\bibitem{MW19} G. Marino, P. Winkert, Moser iteration applied to elliptic equations with critical growth on the boundary, Nonlinear Anal. 180 (2019) 154-168.
	
	\bibitem{Ok18}
	J. Ok,
	Partial regularity for general systems of double phase type with continuous coefficients,
	Nonlinear Anal. 177 (2018) 673--698.
	
	\bibitem{Ok20}
	J. Ok,
	 Regularity for double phase problems under additional integrability assumptions,
	Nonlinear Anal. 194 (2020) 111408.
	
	\bibitem{PS18}
	K. Perera, M. Squassina,
	Existence results for double-phase problems via Morse theory, 
	Commun. Contemp. Math. 20 (2018) 175023.
	
	\bibitem{Rab.Mo86} P.H. Rabinowitz, 
	Minimax methods in critical point theory with applications to differential equations, CBMS regional conference series in mathematics, vol. 65, American Mathematical Society, Providence, RI, 1986.
	
	\bibitem{RT16}
	M.A. Ragusa, A. Tachikawa,
	Boundary regularity of minimizers of {$p(x)$}-energy functionals,
	Ann. Inst. H. Poincar\'{e} Anal. Non Lin\'{e}aire 33 (2) (2016) 451--476.
	
	\bibitem{RT20}
	M.A. Ragusa, A. Tachikawa,
	Regularity for minimizers for functionals of double phase with variable exponents, Adv. Nonlinear Anal. 9 (1) (2020) 710--728.
	
	\bibitem{VW22}
	F. Vetro, P. Winkert,  Constant sign solutions for double phase problems with variable
		exponents , Appl. Math. Lett. 135 (2023) 108404.
	
	\bibitem{Z86}
	V.V. Zhikov,
	 Averaging of functionals of the calculus of variations and elasticity theory,
	Izv. Akad. Nauk SSSR Ser. Mat. 50 (4) (1986)  675--710.
	
	\bibitem{Z11}
	V.V. Zhikov,
	 On variational problems and nonlinear elliptic equations with nonstandard growth conditions,
	J. Math. Sci. 173 (5) (2011)  463--570.
	
\end{thebibliography}
\end{document}